\def\setminus{-} 
\def\trep{\tr_{F/\Q}(\ep)}
\definecolor{aquamarine}{rgb}{0.5, 1.0, 0.83}
\keywords{}
\subjclass[2020]{}
\thanks{}
\title{Class Number Formulas for Certain Biquadratic Fields}
\begin{document}

\author{Elizabeth Athaide}
\address{Department of Mathematics, Massachusetts Institute of Technology, 77 Massachusetts Avenue, 
Cambridge, MA 02139}
\email{eathaide@mit.edu}

\author{Emma Cardwell}
\address[]{Department of Mathematics, Harvard University, 1 Oxford Street, Cambridge, MA 02138}
\email[]{ecardwell@college.harvard.edu}

\author{Christina Thompson}
\address[]{Department of Mathematics, Stanford University, 450 Jane Stanford Way, Stanford, CA 94305}
\email[]{cthomps@stanford.edu}

\maketitle

\color{black}

\black{\begin{abstract} 
    We consider the class numbers of imaginary quadratic extensions $F(\sqrt{-p})$, for certain primes $p$, of totally real quadratic fields $F$ which have class number one. Using seminal work of Shintani, we obtain two elementary class number formulas for many such fields. The first expresses the class number as an alternating sum of terms that we generate from the coefficients of the power series expansions of two simple rational functions that depend on the arithmetic of $F$ and $p$. The second makes use of expansions of $1/p$, where $p$ is a prime such that $p \equiv 3 \pmod{4}$ and $p$ remains inert in $F$. More precisely, for a generator $\ep_F$ of the totally positive unit group of $\cao_F$, the base-$\ep_{F}$ expansion of $1/p$ has period length $\ell_{F,p}$, and our second class number formula expresses the class number as a finite sum over disjoint cosets of size $\ell_{F,p}$.
\end{abstract}
\section{Introduction}

 \black{The theory of class numbers has a rich history, beginning with Gauss's effort to understand how primes could be represented by positive definite binary quadratic forms \cite{cox}. Gauss recognized that $\SL_2(\Z)$ acts naturally on positive definite integral binary quadratic forms $f(X, Y) = aX^{2} + bXY + cY^{2}$}  \black{with fixed discriminant $-d = b^{2} - 4ac$. He proved that the set of equivalence classes under this action is a finite abelian group; the order of this group is known as the \emph{class number} $h(-d)$.} \black{The class group for quadratic forms of discriminant $d$ is also isomorphic to the ideal class group for the ring of integers of the quadratic field $\Q(\sqrt{d})$. Therefore, it is natural to ask whether results about Gauss's class numbers are glimpses of results for the class numbers $h_K$ of more general number fields $K$. In this spirit, we recall two surprising results for Gauss's class numbers.}
 
\black{\textcolor{black}{In the 1970s, Hirzebruch \cite{hirzebruch_sqrt_p} and Zagier \cite{zagier_sqrt_p} found an elegant 
formula for $h(-p)$}, when $7\leq p\equiv 3 \pmod{4}$ is prime and $h(4p) = 1$. \textcolor{black}{If t}he simple continued fraction for $\sqrt{p}$ \black{is written as}
\begin{align*}
    \sqrt{p} = a_{0} + \dfrac{1}{a_{1} + \dfrac{1}{a_{2} + \dfrac{1}{\ddots}}} = [a_0,\overline{a_1,\ldots, a_{2t}}], 
\end{align*}
where the repeating period begins with $a_1$ and has minimal even length $2t$, they proved that 
\begin{align}
\label{hirzebruch_zagier_formula}
    h(-p) = \dfrac{1}{3} \sum_{k=1}^{2t}(-1)^{k}a_{k}.
\end{align} 
More recently in the 1990s, Girstmair \cite{girstmair_g_expansions} found another elegant formula as an alternating sum of numbers that are even simpler to describe. Namely, if $g$ is a primitive root modulo $p$, he examines the base $g$ expansion of $1/p$, which is eventually periodic with period length $p-1$ (see \cite{hardy}, Section $9.6$). If this period is $\overline{x_{1}x_{2}...x_{p-1}}$, where $0 \leq x_i\leq g-1$, then he proved that
\begin{align}
\label{GirstmairFormula}
    h(-p) = \dfrac{1}{g+1}\sum_{k = 1}^{p -1} (-1)^{k}x_{k}. 
\end{align}}

\black{
A priori, these results are unexpected relationships between combinatorial sums and class numbers of binary quadratic forms with discriminant $-p$. \black{Since class numbers of binary quadratic forms are also examples of class numbers of number fields,} it is natural to ask whether \eqref{thm:1.1} and \eqref{thm 1.2} are glimpses of a more general theory where class numbers of number fields can be described as alternating sums of combinatorial numbers. We show that this is indeed the case for a large class of imaginary quadratic extensions of real quadratic fields $F$.}} \black{\black{To make this precise, suppose that $F = \Q(\sqrt{d})$, where $d > 1$ is square-free. Throughout, we assume that its ring of integers $\cao_F$ has class number 1. We note that $\cao_F = \Z[\theta_F]$, where we let
\begin{align*}
   \theta_F \coloneqq \begin{cases}
       \sqrt{d} &\quad \textrm{if }d\equiv 3\pmod{4}\\
        \frac{1 + \sqrt{d}}{2} &\quad \textrm{if }d\equiv 1\pmod{4}.
    \end{cases}
\end{align*} The imaginary quadratic extensions of $F$ that we consider are of the form $F(\sqrt{-p})$, where $p$ is a prime for which $7\leq p \equiv 3 \pmod{4}$ and $\legendre{d}{p} = -1$. These conditions imply that the relative discriminant ideal is the prime ideal $p\cao_F$ (see Lemma \ref{pof}). Moreover, for convenience, we fix a generator $\rho_{F,p} \coloneqq a + b \theta_F \in \cao_F$ such that $(\cao_F/p\cao_F)^{\times} = \langle \rho_{F,p} +p\cao_F\rangle\cong \F_{p^2}^{\times}$.} 

\black{In this setting, we derive a class number formula for $F(\sqrt{-p})$ as an alternating sum that arises from $p$ and invariants of $F$. Our key observation is that the combinatorial structure that underlies \eqref{hirzebruch_zagier_formula} and \eqref{GirstmairFormula} can be reformulated in terms of recurrence relations that can be captured by the coefficients of distinguished rational functions. Therefore, our goal is to define two rational functions (reflecting that $F$ \textcolor{black}{has degree 2 over $\Q$}) whose coefficients can be incorporated into an alternating sum that yields the class number $h_{F(\sqrt{-p})}$.

To this end, we use $\rho_{F,p} = a+b\theta_F$ to define integers 
    \begin{align}
    \label{def:c_defns}
        C_{F,p} &\coloneqq a^{2} + ab\cdot \tr_{F/\Q}(\theta_F) + \norm_{F/\Q}(\theta_F)b^{2}, \\ \label{def:d_defns}
        D_{F,p}  &\coloneqq 2a + b\cdot \tr_{F/\Q}({\theta_F}),
    \end{align}
and in turn, to define the rational functions as
    \begin{align}
\label{eqn:x1defns}
        X_{F,p}(z) &= \sum_{m \geq 1} x(m) z^m  \coloneqq  \dfrac{az -C_{F,p}z^2}{C_{F,p}z^2 - D_{F,p}z + 1}, \\ \label{eqn:x2defns}
        Y_{F,p}(z) &= \sum_{m\geq 1} y(m) z^m \coloneqq    \dfrac{bz}{C_{F,p}z^{2} - D_{F,p}z + 1}.
    \end{align}
Moreover, we must delicately take into account the presence of nontrivial units as they inform class number calculations. To make this precise, we recall that Dirichlet's Unit Theorem implies that $\cao_F^\times = \{\pm \ep_F^j, j\in \Z\}$, where $\ep_F = s+t\theta_F$ is the totally positive fundamental unit. We then define $t$ pairs of sequences, say  
$\left\{({x}_i(m), {y}_i(m))\;:\;  \;m\geq 1\right\}$, where $t$ is the coefficient of $\theta_F$ in $\ep_F$, that encode the action of $\ep_F$ by means of expressions involving $x(m)$ and $y(m)$ (see \eqref{tilde_defns}). Finally, we find that the analogues of the right hand side of \eqref{GirstmairFormula} turn out to be obtained from the quadratic form 
    \begin{align*}
     Q_{F}(Y_1, Y_2)\coloneqq \tr_{F/\Q}(\ep_F)Y_1^2+4Y_1Y_2+\tr_{F/\Q}(\ep_F)Y_2^2. 
    \end{align*}
In terms of this data, we obtain the following theorem, which \textcolor{black}{gives a formula for the class number} $h_{F(\sqrt{-p})}$.}}
\black{\begin{theorem}
\label{thm:1.1}
 Assuming the notation and hypotheses above, we have 
 \begin{align*}
     h_{F(\sqrt{-p})}=\frac{1}{16t^2 p^2}\sum_{\substack{1\leq m\leq p^2-1\\ 1\leq i\leq t}}(-1)^mQ_{F}\left({x}_i(m),{y}_i(m)\right).
 \end{align*}
\end{theorem}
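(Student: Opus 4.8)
The plan is to express $h_{F(\sqrt{-p})}$ through the value at $s=0$ of the Hecke $L$-function attached to the quadratic character cutting out the extension, and then to evaluate that $L$-value by Shintani's method. Write $K=F(\sqrt{-p})$ and let $\chi$ be the quadratic Hecke character of $F$ whose fixed field is $K$; by Lemma \ref{pof} its finite conductor is $p\cao_F$, and since $p$ is inert, $\chi$ factors through $(\cao_F/p\cao_F)^{\times}\cong\F_{p^2}^{\times}$ and is the unique quadratic residue character there, determined by $\chi(\rho_{F,p})=-1$, so that $\chi(\rho_{F,p}^m)=(-1)^m$. Because $K/F$ is a CM extension with $F$ real quadratic, the quotient $\zeta_K(s)/\zeta_F(s)=L(s,\chi)$ is holomorphic and nonzero at $s=0$, and comparing the leading terms of $\zeta_K$ and $\zeta_F$ there via the analytic class number formula expresses $L(0,\chi)$ in terms of $h_K$ and an explicit factor built from $h_F=1$, the numbers of roots of unity $w_K=w_F=2$, and the regulator quotient $R_K/R_F=2/Q$, where $Q=[\cao_K^{\times}:\mu_K\cao_F^{\times}]$ is the unit index. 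Since $\norm_{F/\Q}(\ep_F)=1$ forces $R_K=2R_F/Q$, this yields $h_K=\tfrac{Q}{2}L(0,\chi)$ and reduces the theorem to an exact evaluation of $L(0,\chi)$.

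For that evaluation I would decompose $L(0,\chi)=\sum_{r}\chi(r)\,\zeta(0,r)$ into partial zeta values over residues $r$ modulo $p\cao_F$ and apply Shintani's cone decomposition to each $\zeta(0,r)$. The totally positive units $\langle\ep_F\rangle$ act on the totally positive quadrant of $F\otimes\R\cong\R^2$, and I would take as fundamental domain the simplicial cone $C$ spanned by $1$ and $\ep_F$. Two features drive the combinatorics. First, the cone lattice $\Z\cdot 1+\Z\cdot\ep_F$ sits inside $\cao_F$ with index $[\cao_F:\Z+\Z\ep_F]=t$, the $\theta_F$-coefficient of $\ep_F=s+t\theta_F$, which forces a sum over $t$ coset representatives and accounts for the range $1\leq i\leq t$ and the factor $t^2$. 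Second, the residues are taken modulo $p\cao_F$ with $\norm_{F/\Q}(p\cao_F)=p^2$, which rescales the Shintani coordinates by $1/p$ and produces the factor $p^2$. Because $F$ has degree $2$, Shintani's formula evaluates each $\zeta(0,r)$ through second Bernoulli polynomials $B_2$ of the fractional Shintani coordinates, and the genuinely quadratic part of these contributions, read against the generators $1$ and $\ep_F$ of $C$, is exactly the form $Q_F(Y_1,Y_2)=\tr_{F/\Q}(\ep_F)(Y_1^2+Y_2^2)+4Y_1Y_2$; the lower-order Bernoulli terms drop out upon summation against the nontrivial character $\chi$.

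It remains to match the lattice-point data in $C$ with the generating-function coefficients. Multiplication by $\rho_{F,p}=a+b\theta_F$ acts on $\cao_F$ by the companion matrix of its minimal polynomial $z^2-D_{F,p}z+C_{F,p}$, where $C_{F,p}=\norm_{F/\Q}(\rho_{F,p})$ and $D_{F,p}=\tr_{F/\Q}(\rho_{F,p})$ by \eqref{def:c_defns}–\eqref{def:d_defns}; hence if $\rho_{F,p}^m=x(m)+y(m)\theta_F$, the sequences $x(m),y(m)$ obey the recurrence with that characteristic polynomial, and a direct computation confirms that their generating functions are precisely $X_{F,p}(z)$ and $Y_{F,p}(z)$ of \eqref{eqn:x1defns}–\eqref{eqn:x2defns}. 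Thus $(x(m),y(m))$ enumerates the nonzero residues $\rho_{F,p}^m$ in the basis $\{1,\theta_F\}$ as $m$ runs over $1\leq m\leq p^2-1$, with $\chi$ on the $m$-th residue equal to $(-1)^m$. Passing from $\{1,\theta_F\}$ to the cone basis $\{1,\ep_F\}$ and threading each residue through the $t$ cosets of $\Z+\Z\ep_F$ in $\cao_F$ gives exactly the twisted sequences $(x_i(m),y_i(m))$, so the Shintani sum becomes $\sum_{m,i}(-1)^m Q_F(x_i(m),y_i(m))$; assembling this with the factor $h_K=\tfrac{Q}{2}L(0,\chi)$ produces the constant $1/(16t^2p^2)$.

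The main obstacle is the precise identification carried out in the second and third paragraphs: running Shintani's formula in the cone basis $\{1,\ep_F\}$, verifying that the quadratic part of the $B_2$-terms collapses to $Q_F$ while the subleading terms cancel against $\chi$, and—most delicately—checking that the unit bookkeeping over the $t$ cosets, together with the change of basis, turns the intrinsic Shintani coordinates of the lattice points into exactly the sequences $(x_i(m),y_i(m))$ synthesized from $x(m)$ and $y(m)$. Tracking all normalizing constants simultaneously—the $1/4$ from $B_2$, the conductor norm $p^2$, the index $t$, and the regulator/unit-index ratio $R_K/R_F=2/Q$—so that they combine into the single factor $1/(16t^2p^2)$ is the step most prone to error, and it will require pinning down the value of the unit index $Q$ in this setting.
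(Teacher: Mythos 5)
Your proposal follows essentially the same route as the paper's proof: Shintani's evaluation of $L(0,\chi_{K/F})$ over the cone spanned by $1$ and $\ep_F$, enumeration of the fundamental-domain points by the powers of $\rho_{F,p}$ with the character contributing $(-1)^m$, the index-$t$ structure of $\Z+\Z\ep_F$ inside $\cao_F$ (the paper's $\ker(\pi)$, Lemma \ref{explicit_kernel}), and the companion-matrix recurrence yielding $X_{F,p}(z)$ and $Y_{F,p}(z)$. The one ingredient you leave open, the unit index $Q=[\cao_K^{\times}:\mu_K\cao_F^{\times}]$, is exactly what the paper settles in Lemma \ref{ep_F_is_ep_K} ($\cao_K^{\times}=\cao_F^{\times}$, via Fr\"olich--Taylor), giving $Q=1$, $R_K=2R_F$, and hence the collapse of all normalizing constants to $1/(16t^2p^2)$.
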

\begin{remark}
For real quadratic fields $F$ with $h_F=1$, Theorem \ref{thm:1.1} applies for one-fourth of the primes. This follows from the strong version of Dirichlet's Theorem on primes in arithmetic progressions, which implies that the primes $p$ such that $p \equiv 3 \pmod 4$ and $\legendre{d}{p} = -1$ have density 1/4.\end{remark}}
\black{
\begin{example}
Here we illustrate Theorem ~\ref{thm:1.1} with $F=\Q(\sqrt{3})$ and $p = 7$. The field $F(\sqrt{-7})$ has class number $h_{F(\sqrt{-7})} = 2$. Note that $F$ has class number $1$, and its totally positive fundamental unit is $\ep_F= 2+\sqrt{3}$, and so $t=1$. The prime $p = 7$ satisfies the required conditions that $p\equiv 3\pmod 4$ and $\legendre{3}{7} = -1$. Therefore, we have that the principal ideal $7\cao_F\subset\O_F= \Z[\sqrt{3}]$ is prime, and so we have that $\cao_F/7\cao_F\cong \F_{49}$. One can check that $\rho_{F,p} = 6+\sqrt{3}$ generates the multiplicative cyclic group $(\O_F/7\O_F)^\times\cong \F_{49}^\times$. Thus we have $a=6, b=1,$ and using \eqref{def:c_defns} and \eqref{def:d_defns}, we find that $C_{F,p}=33,$ and $D_{F,p}=-12,$ which in turn by \eqref{eqn:x1defns} and \eqref{eqn:x2defns} give
        \begin{align*}
           X_{F,p}(z)  &= \sum_{m\ge1} x(m)z^m = 6z + 39z^2 + 270 z^3 + 1953z^4 + \ldots = \dfrac{6z - 33z^2}{33z^2 +12z+ 1} ,\\
           Y_{F,p}(z)  &= \sum_{m\ge1} y(m)z^m  = z+12z^2+111z^3 + 936z^4 + \ldots=  \dfrac{z}{33z^{2} +12z + 1} .
        \end{align*}
Theorem ~\ref{thm:1.1} offers a formula for $h_{F(\sqrt{-7})}$ as an alternating sum of $7^2-1=48$ terms that are assembled from the first $48$ coefficients of $X_{F,p}(z)$ and $Y_{F,p}(z)$. Furthermore, because $t=1$, the relevant pairs $\{x_1(m), y_1(m)\}$ are merely reductions of the pairs of coefficients $\{x(m),y(m)\}$ to a specific \emph{fundamental domain}, as given in \eqref{tilde_defns}. One finds that 
\begin{table}[h]
    \centering
    \begin{tabular}{l l l l}
    $  {x}_{1}(1) = 1,$   & ${x}_{1}(2) = -5,$ & $\ldots,$ & $  {x}_{1}(48) = -5,$  \\
    $y_1(1) = -5,$
         & ${y}_{1}(2) = -4$, & $\ldots,$&  ${y}_{1}(48) = -7.$\
    \end{tabular}
\end{table}

\noindent \textcolor{black}{We now use Theorem ~\ref{thm:1.1} to calculate $h_{F(\sqrt{-7})}$:}
\begin{align*}
    h_{F(\sqrt{-7})}&=\frac{1}{784}\sum_{1\leq m\leq 48}(-1)^m\bigg[4x_1(m)^2+4x_1(m)y_1(m)+4y_1(m)^2\bigg]\\
    &= \frac{1}{784}(-84+76-300+52-28+\cdots + 436)=2.
\end{align*}
\end{example}}

\black{We circle back to the fact that the class number formula in \eqref{GirstmairFormula} makes use of the base $g$ expansion of $1/p$. We stress that the number of terms in the sum, which is $p-1$, is the length of the repeating period of this expansion. Therefore, we ask whether the expression in Theorem ~\ref{thm:1.1} can be reformulated so that the number of terms in the sum equals the period length of an analogous expansion of $1/p$. We find, indeed, that this is the case.}

\black{In the setting of Theorem ~\ref{thm:1.1}, it is natural to consider the \emph{base-$\ep_{F}$ expansion} of elements $
\alpha \in F$. To be precise, there is a unique sequence of integers $a_n, a_{n-1}, \ldots, a_{0}, a_{-1}, a_{-2}, \ldots$, with $0 \le a_{i} \le \lfloor \ep_{F} \rfloor$, for which 
\begin{align}
    \alpha = a_{n}\ep_F^n + a_{n-1}\ep_{F}^{n-1} + \ldots + a_0 + a_{-1}\ep_{F}^{-1} + a_{-2}\ep_{F}^{-2} + \ldots .
\end{align}
The above expression is called the \emph{base}-$\ep_{F}$ \emph{expansion} of $\alpha$, and it is well-known that such expansions are eventually periodic (see, for example, \cite{pisot}). To recast Theorem ~\ref{thm:1.1} in terms of these expansions, we require the following finite set:
\begin{align}
    R_{F,p} := \left\{r_1+r_2\ep_F\in\frac{1}{p}\O_F : r_1\in\Q\cap(0,1],r_2\in\Q\cap[0,1)\right\},
\end{align}
which is known as the \emph{Shintani set} for $F$ at $p$, when $p\equiv 3\pmod 4$ and $\legendre{d}{p}=-1$. The totally positive units define a group action of $\O_F^{\times,+}\coloneqq\langle\ep_F\rangle$ onto $R_{F,p}$ as follows.
\[
    \ep_F \ast (r_1+r_2\ep_F) \coloneqq (1-r_2) + \{r_1+r_2 \tr_{F/\Q}(\ep_F)\}\ep_F, 
\]
where $\{x\}\coloneqq x - \lfloor x\rfloor$ is the fractional part of $x$. Under this action, the set $R_{F,p}$ is a finite disjoint union of orbits, say $$R_{F,p} = \bigsqcup_{r\in \cao_F^{\times,+}\backslash R_{F,p}} \cao_F^{\times, +}\ast r.$$ For $r\in R_{F,p} \backslash \cao_F$, we prove (see Lemma ~\ref{orbit_length_is_cycle_length}) that the number of elements in the orbit of $r$ under $\ep_F$ is equal to the period length of $1/p$ in base $\ep_F,$ which we denote $\ell_{F,p}.$ This allows us to now state the desired class number formula as a sum over $\ell_{F,p}$ terms, where we make the following abuse of notation:
    \begin{align*}
         {Q}_F(r_1 + r_2\ep_{F}) =  Q_{F}(r_1, r_2).
    \end{align*}
    \begin{theorem}\label{thm 1.2}
    Assuming the notation and hypotheses from Theorem ~\ref{thm:1.1}, we have
    \[
        h_{F(\sqrt{-p})}=\frac{1}{4}\sum_{i=1}^{\ell_{F,p}}\hspace{+0.23cm}\sum_{r\in \cao_F^{\times,+}\backslash R_{F,p}} \chi_{F(\sqrt{-p})/F} \left({rp}\cao_{F}\right) Q_F(\ep_{F}^i \ast r),
    \]
    where $\chi_{F(\sqrt{-p})/F}$ is the unique quadratic Hecke character of conductor $p\cao_{F}$. 
\end{theorem}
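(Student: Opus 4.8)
The plan is to derive this statement from Theorem~\ref{thm:1.1} by reorganizing its alternating sum according to the orbit structure of the Shintani action and by recognizing the sign $(-1)^m$ as a value of the Hecke character $\chi_{F(\sqrt{-p})/F}$. The starting point is a dictionary between the index set $\{(m,i):1\le m\le p^2-1,\ 1\le i\le t\}$ of Theorem~\ref{thm:1.1} and the set $R_{F,p}\backslash\cao_F$. First I would count: since $\ep_F=s+t\theta_F$, the lattice $\Z+\Z\ep_F$ has index $t$ in $\cao_F$, and the region $(0,1]\times[0,1)$ is a fundamental domain for $\Z+\Z\ep_F$, so $|R_{F,p}|=[\tfrac1p\cao_F:\Z+\Z\ep_F]=p^2t$ while $|R_{F,p}\cap\cao_F|=t$, giving $|R_{F,p}\backslash\cao_F|=t(p^2-1)$, exactly the number of summands. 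Using the recurrence behind \eqref{eqn:x1defns}--\eqref{eqn:x2defns} (whose characteristic polynomial $z^2-D_{F,p}z+C_{F,p}$ is, by \eqref{def:c_defns}--\eqref{def:d_defns}, the minimal polynomial of $\rho_{F,p}$), multiplication by $\rho_{F,p}$ advances the index $m$, so the element attached to $(m,i)$ reduces modulo $p\cao_F$ to $\rho_{F,p}^m$; the pairs $(x_i(m),y_i(m))$ of \eqref{tilde_defns} are then the suitably scaled coordinates in the basis $\{1,\ep_F\}$ of the representative $r\in R_{F,p}\backslash\cao_F$ with $pr\equiv\rho_{F,p}^m\pmod{p\cao_F}$.

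Next I would analyze the action. A direct computation using $\ep_F^2=\tr_{F/\Q}(\ep_F)\ep_F-1$ shows $\ep_F\ast r\equiv\ep_F r\pmod{\Z+\Z\ep_F}$, so $\ast$ is, modulo $\cao_F$, multiplication by $\ep_F$. For $r\notin\cao_F$ the element $pr$ is a unit modulo the prime $p\cao_F$, hence the orbit length is the least $k$ with $\ep_F^k\equiv1\pmod{p\cao_F}$, i.e. the order of $\ep_F$ in $(\cao_F/p\cao_F)^\times\cong\F_{p^2}^\times$; by Lemma~\ref{orbit_length_is_cycle_length} this common value equals $\ell_{F,p}$, uniformly in $r$. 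The orbits meeting $\cao_F$ play no role: if $r\in\cao_F$ then $p\mid rp$, so $\chi_{F(\sqrt{-p})/F}(rp\cao_F)=0$ and those terms vanish, so the two index sets carry the same total of nonzero summands, namely $t(p^2-1)=\ell_{F,p}\cdot\bigl(t(p^2-1)/\ell_{F,p}\bigr)$.

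Then I would pin down the character. Because $h_F=1$ and the relative discriminant is $p\cao_F$, the character $\chi_{F(\sqrt{-p})/F}$ restricts on ideals prime to $p$ to the unique order-two character of $(\cao_F/p\cao_F)^\times\cong\F_{p^2}^\times$, which sends a generator to $-1$. As $\rho_{F,p}$ generates this group, $\chi_{F(\sqrt{-p})/F}(rp\cao_F)=\chi_{F(\sqrt{-p})/F}(\rho_{F,p}^m\cao_F)=(-1)^m$, identifying the alternating sign. Moreover $\chi_{F(\sqrt{-p})/F}$ is constant along $\ep_F$-orbits: from $\norm_{F/\Q}(\ep_F)=1$ one gets $\ep_F^{p+1}\equiv1\pmod{p\cao_F}$, so the order of $\ep_F$ divides $p+1$, which divides $(p^2-1)/2$ since $(p-1)/2$ is odd for $p\equiv3\pmod4$; hence $\ep_F$ is a square modulo $p\cao_F$ and $\chi_{F(\sqrt{-p})/F}(\ep_F\cao_F)=1$. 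Here I would also note that every representative is totally positive (as $r_1>0$, $r_2\ge0$ and $\ep_F$ is totally positive), so the archimedean part of the character contributes trivially and the value genuinely depends only on the orbit.

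Finally I would assemble everything: regrouping the $t(p^2-1)$ surviving summands of Theorem~\ref{thm:1.1} into $\ep_F$-orbits, factoring out the orbit-constant $\chi_{F(\sqrt{-p})/F}(rp\cao_F)$, and letting $i$ run over $1,\dots,\ell_{F,p}$ so that $\ep_F^i\ast r$ sweeps each orbit once, converts the single sum into the double sum $\sum_{i=1}^{\ell_{F,p}}\sum_{r\in\cao_F^{\times,+}\backslash R_{F,p}}$ of the statement. The passage from $\tfrac{1}{16t^2p^2}$ to $\tfrac14$ is then forced by the degree-two homogeneity of $Q_F$ together with the scaling relating $(x_i(m),y_i(m))$ to the coordinates of $r\in\tfrac1p\cao_F$. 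I expect the main obstacle to be precisely this matching step: verifying from \eqref{tilde_defns} that $(x_i(m),y_i(m))$ are the correctly scaled coordinates of the Shintani representative reducing to $\rho_{F,p}^m$, so that $Q_F(x_i(m),y_i(m))$ and the sign align exactly with $Q_F(\ep_F^i\ast r)$ and $\chi_{F(\sqrt{-p})/F}(rp\cao_F)$; once that is established, the constant bookkeeping is routine.
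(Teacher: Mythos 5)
Your strategy---deriving Theorem~\ref{thm 1.2} by regrouping the alternating sum of Theorem~\ref{thm:1.1} into $\ep_F$-orbits---differs from the paper, which proves Theorem~\ref{thm 1.2} directly from Shintani's formula (Proposition~\ref{shintani_simplified}) by decomposing $R_{F,p}$ into Shintani cycles. Much of your dictionary is sound: the count $|R_{F,p}\setminus\cao_F|=t(p^2-1)$, the identification of the sign $(-1)^m$ with $\chi_{F(\sqrt{-p})/F}(rp\cao_F)$, the constancy of the character along orbits (though the paper gets this more simply from $\ep_F^i r\cao_F=r\cao_F$ and the conductor, with no need for your square-class argument), and the uniform orbit length $\ell_{F,p}$. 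However, there is a genuine gap at exactly the step you call routine bookkeeping. By \eqref{tilde_defns} we have $x_i(m)=tp(2\tilde{x}_i(m)-1)$ and $y_i(m)=tp(2\tilde{y}_i(m)-1)$, so homogeneity of $Q_F$ only gives
\[
\frac{1}{16t^2p^2}\,Q_F\big(x_i(m),y_i(m)\big)=\frac{1}{4}\,Q_F\Big(\tilde{x}_i(m)-\tfrac12,\;\tilde{y}_i(m)-\tfrac12\Big),
\]
i.e.\ after rescaling, the form is evaluated at the Shintani representative \emph{shifted by} $(\tfrac12,\tfrac12)$, whereas Theorem~\ref{thm 1.2} evaluates $Q_F$ at the raw coordinates $(r_1,r_2)$ of $\ep_F^i\ast r$. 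Expanding,
\[
Q_F\Big(r_1-\tfrac12,\,r_2-\tfrac12\Big)=Q_F(r_1,r_2)-\big(\tr_{F/\Q}(\ep_F)+2\big)(r_1+r_2)+\Big(\tfrac{\tr_{F/\Q}(\ep_F)}{2}+1\Big),
\]
and the linear term $(r_1+r_2)$ does not cancel pointwise and is not removed by any scaling argument.

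Making these linear and constant terms disappear requires two inputs that your proposal never supplies: (i) the sum $\sum_{i=1}^{\ell_{F,p}}\big(r_1(i)+r_2(i)\big)$ over each nontrivial orbit equals the \emph{same} constant $\ell_{F,p}$ for every orbit (the paper's Lemma~\ref{sum_of_a_cycle_is_l}, proved via the relation $r_1(i+1)=1-r_2(i)$ from Lemma~\ref{prop:abbounds} and telescoping), so that the offending terms become orbit-independent constants; and (ii) character orthogonality $\sum_{r\in\mathcal{L}\setminus\cao_F}\chi_{K/F}(rp\cao_F)=0$ over a set $\mathcal{L}$ of orbit representatives (deduced in the paper from $\sum_{r\in R_{F,p}}\chi_{K/F}(rp\cao_F)=0$, imported from \cite{wei_lun_stark_units}), which annihilates any orbit-constant contribution. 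Without (i) and (ii), your regrouped sum computes $\sum\chi\,Q_F\big(r_1-\tfrac12,r_2-\tfrac12\big)$ rather than $\sum\chi\,Q_F(r_1,r_2)$, and these genuinely differ term by term. Once those two lemmas are added, your route from Theorem~\ref{thm:1.1} does go through, and at that point it is essentially the paper's own computation carried out in different packaging.
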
}
\black{
    \begin{example}
        Now we illustrate Theorem ~\ref{thm 1.2} with $F=\Q(\sqrt{3})$ and $p=7$, where $h_F=1$ and $\ep_F=2+\sqrt{3}$ (so $t=1$). One can check (for example, using {\tt{SageMath}}) that the base-$\ep_F$ expansion of $1/7$ is \begin{align*}
            \frac{1}{7}&=\ep_F^{-2} +\sum_{i=0}^\infty \left({3\ep_F^{-8i-3}+2\ep_F^{-8i-4}+2\ep_F^{-8i-5}+2\ep_F^{-8i-7}+2\ep_F^{-8i-8}+3\ep_F^{-8i-9}}\right)\\
            &=0.01\overline{32202230}.
        \end{align*}
        Thus, we see that the base $\ep_F$ expansion of $1/7$ has period length $\ell_{F,7}=8$. Since $|R_{F,7}\setminus\cao_F|=tp^2-t=48$ (see Lemmas ~\ref{shin_set_construction} and ~\ref{explicit_kernel}), we deduce that $\cao_F^{\times,+}\backslash(R_{F,7}\setminus\cao_F)$ contains $48/\ell_{F,7}=6$ disjoint orbits. One can also verify that the set
        \begin{align*}
           \left\{\frac{1}{7}+\frac{1}{7}\ep_F,\hspace{+0.1 cm}\frac{1}{7},\hspace{+0.1 cm}\frac{1}{7}+\frac{4}{7}\ep_F,\hspace{+0.1 cm}\frac{1}{7}+\frac{5}{7}\ep_F,\hspace{+0.1 cm}\frac{2}{7}+\frac{2}{7}\ep_F,\hspace{+0.1 cm}\frac{3}{7}\right\}.
        \end{align*}
        is a complete set of orbit representatives for $\cao_{F}^{\times,+}\backslash (R_{F,7}\setminus \cao_F)$. Equipped with these values, Theorem ~\nolinebreak ~\ref{thm 1.2} states that
        \begin{align*}
            h_{F(\sqrt{-7})}&=\frac{1}{4}\sum_{i=1}^{8}\hspace{+0.23cm}\sum_{r\in R_{F,p}\backslash \cao_F^{\times,+}} \chi_{F(\sqrt{-7})/F} \left(r {p}\cao_{F}\right) Q_F(\ep_{F}^i \ast r)\\
            &=\frac{1}{4}\bigg(-\frac{220}{7} + \frac{228}{7} - \frac{188}{7} + \frac{212}{7} - \frac{180}{7} + \frac{204}{7}\bigg)=2.
        \end{align*}
    \end{example}}
    
\medskip
\black{Theorems ~\ref{thm:1.1} and ~\ref{thm 1.2} are generalizations of the results from Hirzebruch-Zagier and Girstmair to the setting of imaginary quadratic extensions of real quadratic fields $F$ with $h_F=1$.
Within this new setting, we prove our theorems by working with a class number formula analogous to the one used in the quadratic setting by Hirzebruch, Zagier, and Girstmair.

Both \eqref{hirzebruch_zagier_formula} and \eqref{GirstmairFormula} arise from a finite version of Dirichlet's class number formula, which relates the Dirichlet $L$-function, an infinite series, to the class number of $\Q(\sqrt{-d})$:
\begin{align*}
    L(1, \chi_{d}) = \dfrac{2\pi}{\omega\sqrt{d}}h(-d),
\end{align*}
where $\omega$ represents the number of roots of unity in $\Q(\sqrt{-d})$, and $\chi_d$ is a primitive Dirichlet character of conductor $d$. Using the functional equation of this $L$-function, the above equation can be written in terms of $L(0, \chi_{d})$, which in turn allows us to use the Hurwitz $\zeta$-function and the periodicity of $\chi_d$ to rewrite this class number formula as a finite sum of Bernoulli polynomials evaluated at integer points. Our work uses an analogous formula of Shintani \cite{shintani_thm}, which expresses the class numbers of totally imaginary quadratic extensions of totally real fields as finite sums assembled from Bernoulli numbers.}

\black{In Section ~\ref{sec2}, we review the background needed to state Shintani's class number formula for imaginary quadratic extensions of real quadratic fields $F$ with $h_F=1$. These formulae involve \say{Shintani sets,} which are something like fundamental domains for the action of the totally positive units on $\frac{1}{p}\cao_F$. The crux of our work relies on combinatorial properties of these sets, which we derive in Section ~\ref{sec2}.
Then, we prove Theorem ~\ref{thm:1.1} in Section ~\ref{section_1.1_proof} and Theorem ~\ref{thm 1.2} in Section ~\ref{section4}. Finally, in Section ~\ref{example_section}, we use Theorems ~\ref{thm:1.1} and ~\ref{thm 1.2} to calculate class numbers of $\Q(\sqrt{3},\sqrt{-p})$, where $p\equiv 3 \pmod{4}$ is prime, $\legendre{d}{p} = -1$, and $p<100$.}
\black{
\section*{Acknowledgements}\label{sec: intro}
The authors were participants in the 2023 UVA REU in Number Theory. They are grateful for the support of grants from Jane Street Capital, the National Science Foundation (DMS-2002265 and DMS- 2147273), the National Security Agency (H98230-23-1-0016), and the Templeton World Charity Foundation. The authors thank Ken Ono, Wei-Lun Tsai, Alejandro De Las Penas Castano, and Eleanor McSpirit for suggesting the problem and for their mentorship and support. \black{They would also like to thank Marie-H\'el\`ene Tom\'e and the other participants of the 2023 UVA REU for many thoughtful discussions.}}
\black{
\section{Shintani's Class Number Formula and Properties of Shintani Sets}\label{sec2}}
\black{In this section, we discuss the background needed to state Shintani's class number formula. While Shintani's theorem is true for totally imaginary quadratic extensions of a totally real field of arbitrary degree, we restrict the following commentary and definitions to the case that $F$ is quadratic with class number $1.$ Throughout this section, we fix a real quadratic field $F$ of class number is 1 and a totally imaginary quadratic extension of $F$, which we denote $K = F(\sqrt{-p})$, where $p \equiv 3\pmod{4}$.
\newline
\subsection{Algebraic Background} \label{alg-background}
Shintani's formula can be used to calculate the relative class number $h_K/h_F$ in terms of invariants of $F$, $K$, and the extension $K/F$ itself. Before stating the formula, we review the definitions of these invariants. }

\black{
The \emph{regulator} $R_L$  of a number field $L$ measures the density of units in the ring of integers. The regulator can be determined by considering the matrix $$[N_j\log(\sigma_j(u_i))],$$ where each $u_i$ is a fundamental unit from the set $u_1, \cdots, u_{k}$ generating the unit group in $\cao_L$, each $\sigma_j$ is a unique Archimedian place of $L$, and $N_j$ is defined to be $1$ if $\sigma_j$ is real, and $2$ if $\sigma_j$ is complex. If we define $r_1$ and $r_2$ respectively to be the number of real and complex embeddings of $L$, by Dirichlet's unit theorem, we see that this matrix has dimension $(r_1+r_2-1)\times (r_1+r_2)$. The regulator $R_L$ is the determinant of the square submatrix which is formed by deleting any single column of this matrix. Since the sum of the entries in each row of this matrix is $0$, this determinant is independent of which column is deleted. If we consider the rows of this matrix as forming a lattice in $\R^{r_1+r_2-1},$ then the regulator is directly proportional to the volume of the fundamental domain associated to this lattice. 

Next, we examine the unit groups of $\cao_K$ and $\cao_F.$ Since $F$ is a real quadratic field, and $K$ is a totally imaginary quadratic extension of $F$,  Dirichlet's unit theorem implies that $\cao_F$ and $\cao_K$ are both $\Z$-modules of rank $1$. More precisely, if we let $\mu_F$ and $\mu_K$ represent the groups of roots of unity in $F$ and $K$ respectively, there exists $\ep_F\in\cao_F$ and $\ep_K\in\cao_K$ such that $\cao_F^\times=\mu_F\times\langle\ep_F\rangle$ and $\cao_K^\times=\mu_K\times\langle\ep_K\rangle.$ Since $F$ is real quadratic and any of $\ep_F,-\ep_F,\ep_F^{-1},-\ep_F^{-1}$ can generate the free part of $\cao_F,$ we can choose $\ep_F$ to be totally positive and greater than $1.$ 
\black{
\begin{lemma}\label{ep_F_is_ep_K}
    We have $\cao_K^\times=\cao_F^\times.$ In particular, we may choose $\ep_K=\ep_F.$
\end{lemma}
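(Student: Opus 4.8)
The plan is to prove the two unit groups coincide by matching their torsion and free parts separately. Since $\cao_F\subseteq\cao_K$, every unit of $\cao_F$ is a unit of $\cao_K$, so $\cao_F^\times\subseteq\cao_K^\times$; as both groups have rank $1$ by Dirichlet's unit theorem, it suffices to show that $\mu_K=\mu_F=\{\pm1\}$ and that the index $[\cao_K^\times:\cao_F^\times]$ equals $1$. The ``in particular'' then follows, since equal groups of rank $1$ have the same free part, allowing the choice $\ep_K=\ep_F$.

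First I would pin down the roots of unity. The field $K=\Q(\sqrt{d},\sqrt{-p})$ is biquadratic, so its only quadratic subfields are $F$, $\Q(\sqrt{-p})$, and $\Q(\sqrt{-dp})$. If $K$ contained a root of unity $\zeta_w$ with $w>2$, then $\Q(\zeta_w)\subseteq K$ with $\varphi(w)\in\{2,4\}$. When $\varphi(w)=2$ (so $w\in\{3,4,6\}$) this would force $\Q(i)$ or $\Q(\sqrt{-3})$ to be one of the three quadratic subfields, and when $\varphi(w)=4$ (so $w\in\{5,8,10,12\}$) it would force $K=\Q(\zeta_w)$. I would rule all of these out using $p\geq 7$ and $\legendre{d}{p}=-1$: these hypotheses guarantee $p\nmid d$, so $dp$ is neither a square nor three times a square, whence $\Q(\sqrt{-p}),\Q(\sqrt{-dp})\notin\{\Q(i),\Q(\sqrt{-3})\}$; moreover $\Q(\zeta_5)=\Q(\zeta_{10})$ is cyclic while $K$ is not, and $\Q(\zeta_8),\Q(\zeta_{12})$ contain no imaginary quadratic field of the form $\Q(\sqrt{-p})$ with $p\geq 7$ an odd prime. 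Hence $\mu_K=\{\pm1\}=\mu_F$.

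For the index I would exploit that $K/F$ is a CM extension: the nontrivial element $c\in\operatorname{Gal}(K/F)$, which sends $\sqrt{-p}\mapsto-\sqrt{-p}$, agrees with complex conjugation under every embedding $K\hookrightarrow\mathbb{C}$, because $F$ is totally real and $\tau(\sqrt{-p})$ is purely imaginary. Consider the homomorphism $\phi\colon\cao_K^\times\to\mathbb{C}^\times$, $u\mapsto u/c(u)$. For every embedding $\tau$ one has $|\tau(u/c(u))|=|\tau(u)|/|\overline{\tau(u)}|=1$, so $u/c(u)$ is an algebraic integer all of whose conjugates lie on the unit circle; by Kronecker's theorem it is a root of unity, so $\phi$ lands in $\mu_K$. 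Its kernel is exactly the set of units fixed by $c$, namely $\cao_K^\times\cap F=\cao_F^\times$. Thus $\cao_K^\times/\cao_F^\times$ embeds into $\mu_K=\{\pm1\}$, the index is $1$ or $2$, and it equals $2$ precisely when some unit $u$ satisfies $c(u)=-u$.

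Finally I would rule out $c(u)=-u$, which is where the inertness of $p$ is essential and which I expect to be the crux of the argument. Such a $u$ lies in the $(-1)$-eigenspace $F\sqrt{-p}$ of $c$, so $u=\beta\sqrt{-p}$ with $\beta\in F$ and $u^2=-p\beta^2$. Since $u^2$ is a unit of $\cao_K$ lying in $F$, it lies in $\cao_F^\times$, which gives the fractional-ideal identity $p\cao_F=(\beta)^{-2}$ in $F$. But $\legendre{d}{p}=-1$ forces $p\cao_F$ to be a prime ideal (Lemma~\ref{pof}), and a nonzero prime ideal cannot be the square of a fractional ideal, since its $\mathfrak{p}$-adic valuation is $1$, which is odd. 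This contradiction shows the index is $1$, so $\cao_K^\times=\cao_F^\times$. The only genuinely delicate point beyond this crux is making the roots-of-unity case analysis airtight, but that reduces to the finitely many $w$ above.
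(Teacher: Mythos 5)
Your proof is correct, but it follows a genuinely different route from the paper's. The paper obtains the index bound $[\cao_K^\times:\cao_F^\times]\le 2$ by citing a theorem of Fr\"olich--Taylor, asserts $\mu_K=\{\pm1\}$ with no detail, and eliminates the index-$2$ case with an embedding argument: a hypothetical $\ep_K$ with $\ep_K^2\in\cao_F^\times$ would satisfy $\mathrm{Re}(\ep_K)=0$, and from $\norm_{K/\Q}(\ep_K)=\pm1$ the paper concludes $\ep_K=\pm i\notin K$. You instead make everything self-contained: you prove $\mu_K=\{\pm1\}$ by enumerating the quadratic subfields of the biquadratic field $K$ (your case analysis is complete, and correctly uses $p\ge 7$ and $p\nmid d$); you reprove the index bound via the CM homomorphism $u\mapsto u/c(u)$ and Kronecker's theorem, which is in essence the standard proof of the very statement the paper cites; and you exclude index $2$ by a valuation argument, since a unit with $c(u)=-u$ has the form $\beta\sqrt{-p}$ with $\beta\in F$ and would force $p\cao_F=(\beta)^{-2}$, impossible because the prime ideal $p\cao_F$ occurs to an odd power on the left and an even power on the right. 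What your route buys is rigor at the final step: the paper's inference from $\mathrm{Re}(\ep_K)=0$ and $\norm_{K/\Q}(\ep_K)=\pm1$ to $\ep_K=\pm i$ is not justified as stated (purely imaginary algebraic units of norm $\pm1$ other than $\pm i$ do exist, e.g.\ $i\sqrt{2+\sqrt{3}}$, whose square is the unit $-(2+\sqrt{3})$; the actual point is that no such element lies in $K$ with square in $\cao_F^\times$, which is exactly what your ideal-theoretic argument establishes), and your version also makes transparent that what is really needed is that $p$ is unramified in $F$. What the paper's route buys is brevity: given the citation, its proof is a few lines, whereas yours must verify the subfield enumeration and the Kronecker step by hand.
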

\begin{proof}
A theorem by Fr\"olich and Taylor shows that $[\cao_K^\times:\cao_F^\times\mu_K]=1\text{ or }2$ (see Theorem 42 in  \cite{taylor}). Since $K=\Q(\sqrt{d},\sqrt{-p})$ for $p\geq 7,$ $\mu_K=\{\pm1\}.$ Thus $\mu_K=\mu_F,$ so $[\cao_K^\times:\cao_F^\times]=1\text{ or }2.$

\black{Now, assume for the sake of contradiction that $[\cao_K^\times:\cao_F^\times]=2,$ so $\ep_K \notin \cao_F^{\times},$ and $\ep_K^2\in\cao_F^\times$. Since $K\cap\R=F,$ we see that $\ep_K\not\in\R.$ However, we know that $\ep_K^2 \in \cao_F \subset \R$. Observe that both $\ep_K \in \C \setminus \R$ and $\ep_K^2 \in \R$ if and only if $\mathrm{Re}(\ep_K) = 0$. Additionally, $\norm_{K/\Q}(\ep_K) = \pm 1$, which implies that $\ep_K = \pm i.$ However, this is a contradiction since $\pm i \notin K,$ so we see that $[\cao_K^\times:\cao_F^\times]=1$, and hence $\cao_K^\times=\mu_F\times\langle\ep_F\rangle=\mu_K\times\langle\ep_F\rangle,$ so we can choose $\ep_K = \ep_F$.}
\end{proof}}
Equipped with the fact that $\ep_K = \ep_F,$ we may now relate the regulators $R_K$ and $R_F$ of $K$ and $F,$ which we do in the lemma which follows.

\begin{lemma}\label{regulator_lemma}
    For $F$ and $K$ as defined in the beginning of this section, we have that $R_K=2R_F.$
\end{lemma}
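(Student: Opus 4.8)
The plan is to compute both regulators directly from the matrix definition recalled above, exploiting the fact, established in Lemma~\ref{ep_F_is_ep_K}, that $\ep_K$ may be taken equal to $\ep_F$. First I would record the Archimedean data. Since $F$ is real quadratic, it has $r_1 = 2$ real places and $r_2 = 0$ complex places; since $K$ is a totally imaginary quadratic extension of $F$, it has $r_1 = 0$ real places and $r_2 = 2$ complex places. In both cases $r_1 + r_2 - 1 = 1$, consistent with the fact that $\cao_F^\times$ and $\cao_K^\times$ have free rank $1$, and in both cases the regulator is therefore the absolute value of a single entry $N_j\log|\sigma_j(\ep)|$ of the associated $1\times 2$ matrix, after deleting one column.

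Next I would evaluate each entry. For $F$, both places $\sigma_1,\sigma_2$ are real, so $N_j = 1$, and taking $\sigma_1$ to be the identity embedding gives $R_F = |\log|\sigma_1(\ep_F)|| = \log \ep_F$, using that $\ep_F$ was chosen totally positive and greater than $1$. For $K$, both places are complex, so $N_j = 2$ for each, and hence $R_K = 2\,|\log|\tau(\ep_K)||$ for either complex place $\tau$ of $K$.

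The crucial step is to compare the two unit embeddings. Each complex place $\tau$ of $K$ restricts to a real place of $F$; since by Lemma~\ref{ep_F_is_ep_K} we may take $\ep_K = \ep_F \in F$, the value $|\tau(\ep_K)|$ equals $|\sigma(\ep_F)|$ for the corresponding real embedding $\sigma$ of $F$. Combining this with the two displays above yields $R_K = 2\,|\log|\sigma(\ep_F)|| = 2R_F$, which is the desired identity.

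The only genuine content, and hence the point requiring care rather than a real obstacle, is the bookkeeping of the factor $N_j = 2$ attached to the complex places of $K$ and the verification that the absolute values of the fundamental unit under the embeddings of $K$ coincide with those under the embeddings of $F$. Both are immediate precisely because $\ep_K$ can be placed inside $F$ by Lemma~\ref{ep_F_is_ep_K}, so the factor of $2$ separating $R_K$ from $R_F$ is entirely accounted for by the passage from real to complex Archimedean places in going from $F$ up to $K$.
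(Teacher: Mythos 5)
Your proposal is correct and follows essentially the same route as the paper: both compute $R_F$ and $R_K$ directly from the $1\times 2$ regulator matrices, using Lemma~\ref{ep_F_is_ep_K} to take $\ep_K = \ep_F$, so that the factor of $2$ comes exactly from the weights $N_j = 2$ at the complex places of $K$. Your write-up is in fact slightly more careful than the paper's, since you explicitly note that each complex place of $K$ restricts to a real place of $F$ and hence that the entries of the two matrices agree up to that factor.
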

\begin{proof}
Since $\ep_F=\ep_K$ by Lemma ~~\ref{ep_F_is_ep_K}, the regulators of the fields $F$ and $K$ as previously defined are determined using the following matrices:
\begin{align*}
    R_F: \begin{bmatrix}
        \log|\ep_F| && \log|-\ep_F|
    \end{bmatrix}\\
    R_K: \begin{bmatrix}
        2\log|\ep_F| && 2\log|-\ep_F|
    \end{bmatrix}
\end{align*}
Thus $R_F=\log|\ep_F|$ and $R_K=2\log|\ep_F|=2R_F.$
\end{proof}

\black{Next, we review the definition of the \emph{relative discriminant ideal} $D_{K/F}$ for our fields $K$ and $F$. Recall that $F$ has class number $1$, so $D_{K/F}$ is principal. Since $K/F$ is quadratic, it is Galois, and its Galois group consists of two elements: the identity and complex conjugation. In this setting, $D_{K/F}$ is given by}
\begin{align*}
     D_{K/F}\coloneqq \left(\det \begin{bmatrix}
            \omega_1 && \omega_2\\
            \overline{\omega_1} && \overline{\omega_2}
        \end{bmatrix}\right)^2\cao_F,
\end{align*}
where $\{\omega_1,\omega_2\}$ is an integral basis of $K/F$. We know that an integral basis will exist in our case by the following argument. From the structure theorem for finitely generated modules over a Dedekind domain, we have that $\cao_K\cong\cao_F^n\oplus\mathfrak{a},$ where $\mathfrak{a}$ is an ideal of $\cao_F$ and $n\in\Z_{\geq0}$ (see Theorem $1.32$ of \cite{narkiewicz}). Since $h_F=1,$ implying $\cao_F$ is a principal ideal domain, $\cao_K$ must be a free $\cao_F$-module of rank $2=[K:F].$
\black{\begin{lemma}\label{pof}
    The set $\{1, \frac{1+\sqrt{-p}}{2}\}$ is an integral basis of $K/F,$ and thus we have $D_{K/F}=p\cao_F.$
\end{lemma}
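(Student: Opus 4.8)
The plan is to exhibit $\omega \coloneqq \frac{1+\sqrt{-p}}{2}$ as an algebraic integer, to show that the order $\cao_F[\omega] = \cao_F\cdot 1 + \cao_F\cdot\omega$ has relative discriminant generating the prime ideal $p\cao_F$, and then to argue that the primality of $p\cao_F$ forces $\cao_F[\omega]=\cao_K$ via the index--discriminant relation, which simultaneously yields $D_{K/F}=p\cao_F$.

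First I would verify that $\omega\in\cao_K$. Its minimal polynomial over $\Q$ (hence over $F$) is $x^2-x+\frac{p+1}{4}$; since $7\le p\equiv 3\pmod 4$, the constant term $\frac{p+1}{4}$ is an integer, so $\omega$ is a root of a monic polynomial over $\Z\subseteq\cao_F$ and is therefore integral. Thus $\{1,\omega\}$ is an $\cao_F$-basis of a full-rank free submodule $\cao_F[\omega]\subseteq\cao_K$, where the fact that $\cao_K$ is itself free of rank $2$ over $\cao_F$ was established immediately above.

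Next I would compute the relative discriminant of this basis. Since complex conjugation is the nontrivial element of $\mathrm{Gal}(K/F)$ and $\overline{\omega}-\omega=-\sqrt{-p}$, we obtain
\[
    d_{K/F}(1,\omega)=\left(\det\begin{pmatrix}1 & \omega\\ 1 & \overline{\omega}\end{pmatrix}\right)^2=(\overline{\omega}-\omega)^2=-p,
\]
so $\cao_F[\omega]$ has relative discriminant ideal $(-p)\cao_F=p\cao_F$. The hypothesis $\legendre{d}{p}=-1$ (which in particular forces $p\nmid d$, with $p$ odd since $p\ge 7$) means that the minimal polynomial of $\theta_F$ is irreducible modulo $p$, so $p$ is inert in $F$ and $p\cao_F$ is a prime, hence squarefree, ideal of $\cao_F$.

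Finally I would invoke the relative index--discriminant formula: for the full-rank free submodule $\cao_F[\omega]\subseteq\cao_K$ one has $d_{K/F}(1,\omega)\,\cao_F=\mathfrak{f}^2\,D_{K/F}$, where $\mathfrak{f}=[\cao_K:\cao_F[\omega]]$ is the $\cao_F$-module index ideal, well defined because $\cao_F$ is a principal ideal domain. Since the left-hand side is the prime ideal $p\cao_F$, the square ideal $\mathfrak{f}^2$ can only be the unit ideal, forcing $\mathfrak{f}=\cao_F$, whence $\cao_F[\omega]=\cao_K$ and $D_{K/F}=p\cao_F$. I expect the main obstacle to be this last step: one must state the relative index--discriminant relation carefully over the Dedekind domain $\cao_F$ and confirm that the squarefreeness of $p\cao_F$ genuinely rules out any nontrivial index, rather than merely bounding it.
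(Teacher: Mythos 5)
Your proposal is correct and follows essentially the same route as the paper: both compute the discriminant of $\{1,\frac{1+\sqrt{-p}}{2}\}$ to be $p\cao_F$, relate it to $D_{K/F}$ through the square of an index (your index ideal $\mathfrak{f}$ is exactly the ideal generated by the determinant of the paper's change-of-basis matrix $A$), and use primality of $p\cao_F$ together with unique factorization in $\cao_F$ to force that square factor to be trivial. Your version is marginally more careful in that it explicitly checks that $\frac{1+\sqrt{-p}}{2}$ is integral (via $x^2-x+\frac{p+1}{4}$), a step the paper leaves implicit.
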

\begin{proof}
     Let $A$ be the change-of-basis matrix from the integral basis $\{\omega_1,\omega_2\}$ to the $F$-basis $\{1, \frac{1+\sqrt{-p}}{2}\}$. We see that
\begin{align*}
    p\cao_F = \left(\det \begin{bmatrix}
            1 && \frac{1+\sqrt{-p}}{2}\\
            1 && \frac{1-\sqrt{-p}}{2}
        \end{bmatrix}\right)^2\cao_F = (\det A)^2 D_{K/F}.
\end{align*}
Since $\cao_F$ is a Dedekind domain, ideals in $\cao_F$ factor uniquely. Therefore since $p\cao_F$ is prime by assumption, $\det A$ must be a unit in $\cao_F$, so $A \in \mathrm{GL}_2(\cao_F)$. Thus, $\{1, \frac{1+\sqrt{-p}}{2}\}$ is an integral basis of $K/F$. Using this integral basis, we see that
    \begin{align*}
D_{K/F}=\left(\frac{1-\sqrt{-p}}{2}-\frac{1+\sqrt{-p}}{2}\right)^2\cao_F=p\cao_F.
    \end{align*}
\end{proof}}

Finally, since $\gal(K/F)\cong \Z/2\Z$, there is a unique nontrivial character $\chi:\gal(K/F)\to\C^\times$. By class field theory, we can consider the precomposition of $\chi$ with the Artin symbol to obtain a character $\chi_{K/F}$ of the group of fractional ideals that are relatively prime to $D_{K/F}$. This is known as the \emph{Hecke character of $K/F$ with conductor $D_{K/F}$.} By definition of the Artin symbol (see, for example, \cite{cox} page 106), we can explicitly compute the value of $\chi_{K/F}$ for any prime ideal $\mathfrak{p}$:
    \begin{align*}
        \chi_{K/F}(\mathfrak{p})= \begin{cases} 
        1 & \mathfrak{p} \text{ splits in } \cao_{K}\\
        -1 & \mathfrak{p} \text{ remains inert in } \cao_{K}\\
        0 & \mathfrak{p} \text{ ramifies in } \cao_{K}.
        \end{cases}
    \end{align*}
\begin{rmk}
    Shintani's class number formula relies on the narrow ideal class group character with conductor $D_{K/F}$ evaluated at fractional ideals. This corresponds to a primitive Grössencharakter with modulus $D_{K/F}$ (see Prop. 6.9 in \cite{neukirch}). Since $\mathrm{Gal}(K/F)\cong \Z/2\Z$, the nontrivial character $\chi: \mathrm{Gal}(K/F)\to\C^\times$ is unique and injective, so class field theory implies that the primitive Grössencharacter with modulus $D_{K/F}$ is unique and corresponds to $\chi$. Hence, we can see that the character used in Shintani's formula is exactly the Grössencharakter. For more details, see Sections 6 and 10 in \cite{neukirch}. 
\end{rmk}
\begin{rmk} \label{mult_ideal_by_unit}
   For any unit $u \in K$ and any ideal $\mathfrak{a}\subset\cao_{K}$, $u\cdot\mathfrak{a}=\mathfrak{a},$ and hence $\chi_{K/F}(u\cdot\mathfrak{a})=\chi_{K/F}(\mathfrak{a}).$
\end{rmk}

\medskip
\subsection{Shintani's Class Number Formula}
In this section, we prove a simplified version of Shintani's formula for real quadratic base fields $F$ with $h_F = 1$. 
\begin{prop}\label{shintani_simplified}
    For a totally real quadratic extension $F$ of $\Q$ with $h_{F} = 1$ and $K = F(\sqrt{-p})$ a totally imaginary quadratic extension of $F$ where $7\leq p \equiv 3\pmod{4}$ remains inert in $\cao_F$, Shintani's formula simplifies to the following:
\begin{align*}
    h_{K} = \frac{1}{2}\sum_{r\in R_{F,p}}{\chi_{K/F}\bigg((r_1+r_2\ep_F) D_{K/F}\bigg)\sum_{\substack{0\leq l_1,l_2\leq2\\ l_1+l_2=2}}{\frac{B_{l_1}(r_1)}{l_1!}\frac{B_{l_2}(r_2)}{l_2!}\mathrm{Tr}_{F/\Q}(\varepsilon_F)^{l_2-1}}},
\end{align*}
where $$R_{F,p} = \left\{r=r_1+r_2\ep_F\;:\; 0<r_1\leq 1, 0\leq r_2< 1, r\in \tfrac{1}{p}\cao_F\right\},$$
and $B_n(x)$ is the degree $n$ Bernoulli polynomial. As $ [F:\Q] = 2$, Shintani's formula only requires the following Bernoulli polynomials:
\begin{align*}
    B_0(x)=1,\quad
    B_1(x)=x-\frac{1}{2},\quad
    B_2(x)=x^2-x+\frac{1}{6}.
\end{align*}
\end{prop}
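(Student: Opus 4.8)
The plan is to prove the formula in two stages. First I would convert the class number $h_K$ into the central value $L(0,\chi_{K/F})$ of the Hecke $L$-function attached to the relative quadratic extension; second, I would invoke Shintani's theorem to evaluate that special value as a finite sum of Bernoulli polynomials indexed by the Shintani set $R_{F,p}$. The factor $\tfrac12$ and the weights $\mathrm{Tr}_{F/\Q}(\ep_F)^{l_2-1}$ should emerge naturally from these two stages, respectively.

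For the first stage, I would begin from the factorization of Dedekind zeta functions $\zeta_K(s)=\zeta_F(s)\,L(s,\chi_{K/F})$, valid because $\chi_{K/F}$ is the quadratic Hecke character cut out by the Artin symbol of $K/F$. Since $F$ has signature $(2,0)$ and $K$ has signature $(0,2)$, the analytic class number formula gives that near $s=0$ both zeta functions have a simple zero, with $\zeta_F(s)=-\tfrac{h_F R_F}{w_F}s+O(s^2)$ and $\zeta_K(s)=-\tfrac{h_K R_K}{w_K}s+O(s^2)$. Because $\chi_{K/F}$ is nontrivial, $L(s,\chi_{K/F})$ is entire, so $L(0,\chi_{K/F})=\lim_{s\to0}\zeta_K(s)/\zeta_F(s)$ equals the ratio of leading coefficients $h_K R_K w_F/(h_F R_F w_K)$. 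Substituting $w_F=w_K=2$ (from $\mu_K=\{\pm1\}$, established in the proof of Lemma \ref{ep_F_is_ep_K}), $R_K=2R_F$ (Lemma \ref{regulator_lemma}), and $h_F=1$ collapses this to $L(0,\chi_{K/F})=2h_K$, i.e. $h_K=\tfrac12 L(0,\chi_{K/F})$; this accounts for the prefactor in the statement.

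For the second stage, I would expand $L(s,\chi_{K/F})=\sum_{\mathfrak c}\chi_{K/F}(\mathfrak c)\,\zeta(s,\mathfrak c)$ over ray classes modulo the conductor $D_{K/F}=p\cao_F$ (Lemma \ref{pof}). Since $h_F=1$, every ideal is principal, so each partial zeta function becomes a sum of $N(\alpha)^{-s}$ over totally positive generators $\alpha$ in a fixed residue class modulo $p\cao_F$, taken up to the totally positive units $\langle\ep_F\rangle$. A fundamental domain for this action on the totally positive cone is the single two-dimensional simplicial cone generated by $1$ and $\ep_F$; grouping the $\alpha$ by residue and rescaling by $p$ identifies the offsets with the elements $r=r_1+r_2\ep_F$ of $R_{F,p}$ (the half-open conditions $0<r_1\le1$, $0\le r_2<1$ selecting one representative each), while the ray class of $(\alpha)=(rp)$ is recorded by $\chi_{K/F}\big((r_1+r_2\ep_F)D_{K/F}\big)$. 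Applying Shintani's evaluation of the cone zeta function at $s=0$ to this single cone then produces the inner sum of products $B_{l_1}(r_1)B_{l_2}(r_2)/(l_1!\,l_2!)$ over $l_1+l_2=2$, where the generator data enters through the norm form $N(r_1+r_2\ep_F)=r_1^2+\mathrm{Tr}_{F/\Q}(\ep_F)\,r_1r_2+r_2^2$ (using $\norm_{F/\Q}(\ep_F)=1$), which is what attaches the weight $\mathrm{Tr}_{F/\Q}(\ep_F)^{l_2-1}$ to the $(l_1,l_2)$ term. Collecting the two stages yields the claimed identity.

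The main obstacle I anticipate lies in the second stage: faithfully specializing the general degree-$n$ statement of Shintani's theorem --- with its cone decomposition, lattice shifts, and the exact normalization of the Bernoulli-polynomial product --- to the single cone $\langle1,\ep_F\rangle$ in degree $2$, and checking that $R_{F,p}$ supplies exactly one representative per relevant ray class with no double counting. Pinning down the precise coefficient so that it is exactly $\mathrm{Tr}_{F/\Q}(\ep_F)^{l_2-1}$ (rather than some other function of the generators) is the most delicate computation. A useful consistency check is that terms with $r\in\cao_F$ drop out, since then $(rp)$ is divisible by the ramified prime $p\cao_F$ and $\chi_{K/F}$ vanishes there, matching the later passage to $R_{F,p}\setminus\cao_F$. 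By contrast, the first stage is a routine comparison of leading Laurent coefficients once the cited lemmas are invoked.
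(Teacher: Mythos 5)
Your first stage is correct and, modulo packaging, coincides with what the paper does: the ratio of leading Taylor coefficients of $\zeta_K$ and $\zeta_F$ at $s=0$, combined with $R_K=2R_F$ (Lemma \ref{regulator_lemma}), $\omega_K=\omega_F=2$, and $h_F=1$, is exactly why the prefactor $\frac{2\omega_K R_F}{R_K[\cao_F^\times:\cao_F^{\times,+}]}$ appearing in Shintani's Theorem 2 evaluates to $1$. The paper simply quotes Shintani's theorem, which already has this constant and the cone decomposition built in, and then evaluates the constant using the same lemmas you invoke; so this part of your plan is sound, if redundant relative to citing the theorem directly.

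The genuine gap is in your second stage, at precisely the point you flag as the ``main obstacle'' and then defer. The region parametrized by your half-open box, namely $\{\lambda_1\cdot 1+\lambda_2\ep_F\;:\;\lambda_1>0,\ \lambda_2\geq 0\}$, is \emph{not} a single open simplicial cone, and Shintani's theorem does not assign your claimed Bernoulli weight to all of it. Shintani's decomposition of $\R^2_+$ modulo $\langle\ep_F\rangle$ uses two cones: the open two-dimensional cone spanned by $1,\ep_F$, whose lattice-shift points $(r_1,r_2)\in(0,1]^2$ receive the weight $\sum_{l_1+l_2=2}\frac{B_{l_1}(r_1)B_{l_2}(r_2)}{2\, l_1!\, l_2!}\tr_{F/\Q}\big(\ep_F^{l_2-1}\big)$, and the boundary ray through $1$, whose points $r_3\in(0,1]$ receive $-B_1(r_3)$. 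The elements of $R_{F,p}$ with $r_2=0$ lie on that ray, so for them the stated formula is not an instance of ``Shintani's evaluation on the single cone''; it has to be derived. This derivation is the actual content of the paper's proof: it splits the box $(0,1]^2$ into its $r_2<1$ and $r_2=1$ slices, uses $\chi_{K/F}\big((r_1+\ep_F)p\cao_F\big)=\chi_{K/F}\big(r_1p\cao_F\big)$ (conductor $p\cao_F$ and $\ep_F\in\cao_F$), and verifies that the inner Bernoulli sum $\mathcal{B}$ satisfies $\mathcal{B}(r_1+\ep_F)=\mathcal{B}(r_1)+\big(r_1-\tfrac12\big)$, so that the $r_2=1$ slice together with the subtracted ray terms becomes exactly the $r_2=0$ slice of $R_{F,p}$. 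Without this computation (or an equivalent one) your argument does not reach the stated formula. Relatedly, the weight must be read as $\tr_{F/\Q}\big(\ep_F^{l_2-1}\big)$, the trace of a power --- equal to $\tr_{F/\Q}(\ep_F)$ for $l_2\in\{0,2\}$ since $\norm_{F/\Q}(\ep_F)=1$, and to $2$ for $l_2=1$ --- rather than a power of the trace as your norm-form heuristic suggests; with the latter reading the cancellation identity above fails (it produces $\tfrac12 B_1(r_1)$ instead of $B_1(r_1)$), so pinning this down is not merely cosmetic.
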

\begin{proof}
We follow \cite{shintani_thm} by first considering the embedding $F\to\R^2$ via 
\begin{align*}
    F\hookrightarrow \R^2 \quad \alpha \mapsto \left(\alpha, \alpha'\right),
\end{align*}
where $\alpha\mapsto\alpha'$ is the nontrivial automorphism in $\gal(F/\Q)$. Shintani shows that the first quadrant $\R^2_+\coloneqq\{(x,y)\in \R^2\;:\; x,y>0\}$ can be decomposed as the following disjoint union:
\begin{align*}
    \R_+^2 &= \bigcup_{\eta\in \cao_F^{\times,+}}\eta C_1\sqcup \bigcup_{\eta\in \cao_{F}^{\times,+}}\eta C_2
\end{align*} 
where $C_1$ is generated by the images of $1,\ep_F$ in $\R^2$ and $C_2$ is generated by the image of $1$:
\begin{align*}
    C_1= \{\lambda_1(1,1) + \lambda_2 (\ep_F, \ep_F') \in \R^2 \;:\; \lambda_1,\lambda_2>0\},\quad C_2 = \{\lambda (1,1) \in \R^2 \;:\; \lambda>0 \},
\end{align*}
and $\eta\in \cao_F^{\times,+}$ acts by component-wise multiplication. Next, for each cone $C_i$, Shintani defines the set $R(i,\frac{1}{p}\cao_F)$ \black{as the following vectors with components in $\Q\cap (0,1]$:}
\begin{align*}
    R\left(1,\tfrac{1}{p}\cao_F\right) &\coloneqq \left\{(r_1,r_2)\in \Q^2 \;:\; 0< r_1,r_2\leq 1, \; r_1+r_2\ep_F\in\tfrac{1}{p}\cao_F\right\}\\
    R\left(2, \tfrac{1}{p}\cao_F\right) &\coloneqq \left\{r_3 \in \Q \;:\; 0< r_3 \leq 1, \; r_3\in \tfrac{1}{p}\cao_F\right\}.
\end{align*}
Let $\chi_{K/F}$ be the unique quadratic character of the narrow ideal class group of $F$ with conductor $p\cao_F$, associated to $K$. Then, assuming the notation above, we have the class number formula 
 \begin{multline*}
         h_{K} = \frac{2\omega_K R_F}{R_K \left[\cao_F^\times:\cao_F^{\times,+}\right]} \left(\sum_{r\in R\left(1,\tfrac{1}{p}\cao_F\right)} \chi_{K/F} \bigg(\left(r_1 + r_2\ep_F\right)p\cao_F\bigg)\sum_{\substack{(l_1,l_2)\in \Z_{\geq0}^2\\ l_1+l_2 = 2}} \frac{B_{l_1}(r_1)B_{l_2}(r_2)}{2\cdot l_1!l_2!} \mathrm{Tr}_{F/\Q}\left(\ep_F^{l_2-1} \right)\right.\\
          - \left.\sum_{r_3\in R\left(2,\tfrac{1}{p}\cao_F\right)} \chi_{K/F}(r_3 p\cao_F)B_1(r_3)\right),
    \end{multline*}   
\noindent where $\omega_K$ is the number of roots of unity in $K$ (\hspace{1sp}\cite{shintani_thm}, Theorem 2). 

We first simplify the coefficient term in this formula. Recall from Lemma ~~\ref{regulator_lemma} that $R_F/R_K ~=~ 1/2$. Since $K = \Q(\sqrt{d},\sqrt{-p})$ for $p\geq 7,$ we have $\omega_K = 2.$ Furthermore, since we may choose the fundamental unit of $F$ to be totally positive, we see that $\cao_F^\times = \{\pm 1\} \times \cao_F^{\times,+}$, so we get $[\cao_F^\times:\cao_{F}^{\times,+}] = 2$. Then, 
\begin{align*}
    \frac{2 \omega_K R_F}{R_K \left[\cao_F^\times:\cao_F^{\times,+}\right]}  &= 1.
\end{align*}
Next, we reindex the sum. First we split the set $R(1, \frac{1}{p}\cao_F)$ into two parts. Consider the sets $R_1,R_2$ given by 
\begin{align*}
    R_1 &\coloneqq \left\{(r_1,r_2)\in \Q^2\;:\; 0<r_1\leq 1, 0< r_2< 1, r_1+r_2\ep_F\in\tfrac{1}{p}\cao_F\right\}\\
    R_2 &\coloneqq \left\{(r_1,r_2)\in \Q^2\;:\; 0<r_1\leq 1, r_2=1, r_1+r_2\ep_F\in\tfrac{1}{p}\cao_F\right\}.
\end{align*}
Additionally, for simplicity we denote the inner sum of Shintani's formula by: 
\begin{align*}
    \mathcal{B}(r_1+r_2\ep_F) \coloneqq \sum_{\substack{(l_1,l_2)\in \Z_{\geq0}^2\\ l_1+l_2 = 2}} \frac{B_{l_1}(r_1)B_{l_2}(r_2)}{2\cdot l_1!l_2!} \tr_{F/\Q}\left(\ep_F^{l_2-1} \right).
\end{align*}
By splitting the sum with $R(1,\tfrac{1}{p}\cao_F) = R_1\bigsqcup R_2$, we see that
\begin{align}\notag
   &h_{K} = \sum_{r\in R_1} \chi_{K/F} \bigg(\left(r_1 + r_2\ep_F\right)p\cao_F\bigg)\mathcal{B}(r_1+r_2\ep_F) \\\label{merging_Rs}
   &\hspace{2 cm} + \sum_{r\in R_2} \chi_{K/F} \bigg(\left(r_1 + \ep_F\right)p\cao_F\bigg)\mathcal{B}(r_1+\ep_F)- \sum_{r\in R\left(2,\tfrac{1}{p}\cao_F\right)} \chi_{K/F}(r p\cao_F)(r-1/2).
\end{align}
Since $\ep_F\in\cao_F^\times$ and $\chi_{K/F}$ has conductor $p\cao_F,$ we have $\chi_{K/F}\big(r_1p\cao_F\big)=\chi_{K/F}\Big((r_1 + \ep_F)p\cao_F\big)$. Moreover, comparing $\mathcal{B}(r_1)$ and $\mathcal{B}(r_1+\ep_F),$ we see that
\black{\begin{align*}
    \mathcal{B}(r_1)&=\frac{r_1^2 - r_1 + 1/3}{4}\tre - \frac{r_1-1/2}{2} \\
    \mathcal{B}(r_1+\ep_F)
    &=\frac{r_1^2 - r_1 + 1/3}{4}\tre + \frac{r_1-1/2}{2}=\mathcal{B}(r_1)+r_1-1/2.
\end{align*}}Thus \eqref{merging_Rs} simplifies to
\begin{align*}
    h_{K} = \frac{1}{2}\sum_{r\in R_{F,p}} \chi_{K/F} \bigg(\left(r_1 + r_2\ep_F\right)p\cao_F\bigg)\sum_{\substack{(l_1,l_2)\in \Z_{\geq0}^2\\ l_1+l_2 = 2}} \frac{B_{l_1}(r_1)B_{l_2}(r_2)}{ l_1!l_2!} \tr_{F/\Q}\left(\ep_F^{l_2-1} \right)
\end{align*}
where $R_{F,p}$ is given by
\begin{align*}
    R_{F,p} & = \left\{r=r_1+r_2\ep_F\;:\; 0<r_1\leq 1, 0\leq r_2< 1, r\in \tfrac{1}{p}\cao_F\right\}.
\end{align*}
\end{proof}
\black{
\begin{definition}
    We call $R_{F,p}$ the \textbf{Shintani set} associated to $F$ and $p$.
\end{definition}}
\black{
\subsection{Properties of Shintani Sets}
In this subsection, we identify a correspondence between $R_{F,p}$ and the finite field $\F_{p^2}$, which will play an important role in our proof of Theorem ~~\ref{thm:1.1}. Namely, we make use of this correspondence and the cyclic structure of the multiplicative group $\F_{p^2}^{\times}$ to enumerate the elements of $R_{F,p}\setminus\cao_F$ using the powers of a generator of $\F_{p^2}^{\times}$.}}

\black{Throughout this subsection, we fix a totally real quadratic field $F$ and an imaginary quadratic extension $K = F(\sqrt{-p})$, where $p \equiv 3\pmod{4}$ and $p$ remains inert in $\cao_F$. We let $\ep_F=s+t\theta_F,$ and to simplify notation, we denote
\begin{align*}
    \ep \coloneqq \ep_F \quad \text{and}\quad
    R \coloneqq R_{F,p}.
\end{align*}}\black{We begin by giving an explicit construction of the Shintani set: 
\black{
\begin{lemma} \label{shin_set_construction}
    The Shintani set $R$ can be written as:
        \begin{align*}
           R &= 
                \Bigg\{ \frac{A}{tp} + \frac{B}{tp}\ep \;\ : \;A + s B \equiv 0 \pmod{t}, \;  A\in (0,tp] \cap \Z, B\in [0, tp ) \cap \Z \Bigg\}.
        \end{align*}
    \end{lemma}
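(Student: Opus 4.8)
The plan is to prove that the two descriptions of $R$ coincide by a direct change of basis from the $\Q$-basis $\{1,\ep\}$ of $F$, in which the Shintani conditions are phrased, to the $\Z$-basis $\{1,\theta_F\}$ of $\cao_F$, in which the integrality requirement $r\in\tfrac1p\cao_F$ is transparent. First I would take an arbitrary $r=r_1+r_2\ep\in F$; since $\ep\notin\Q$, the pair $(r_1,r_2)\in\Q^2$ is uniquely determined by $r$, so it suffices to rewrite each defining condition of $R$ in terms of $(r_1,r_2)$ and then substitute $A=tp\,r_1$ and $B=tp\,r_2$.

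Next I would use $\ep=s+t\theta_F$ to obtain $r=(r_1+sr_2)+(tr_2)\theta_F$. Because $\cao_F=\Z[\theta_F]$, we have $\tfrac1p\cao_F=\tfrac1p\Z\oplus\tfrac1p\Z\,\theta_F$, so $r\in\tfrac1p\cao_F$ if and only if both coordinates lie in $\tfrac1p\Z$, namely $tr_2\in\tfrac1p\Z$ and $r_1+sr_2\in\tfrac1p\Z$. Rewriting these via $A,B$ gives $tr_2=B/p$ and $r_1+sr_2=(A+sB)/(tp)$. The first condition then reads $B\in\Z$, and—once $B\in\Z$—the second reads $(A+sB)/t\in\Z$, which simultaneously forces $A\in\Z$ and is exactly the congruence $A+sB\equiv0\pmod{t}$. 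Conversely, any integers $A,B$ with $A+sB\equiv0\pmod{t}$ produce an $r\in\tfrac1p\cao_F$ by running these equivalences backward. This shows the integrality condition defining $R$ is equivalent to $A,B\in\Z$ together with $A+sB\equiv0\pmod{t}$.

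Finally I would translate the range constraints: $0<r_1\le1$ becomes $A\in(0,tp]\cap\Z$, and $0\le r_2<1$ becomes $B\in[0,tp)\cap\Z$. Combining the integrality characterization with these bounds yields exactly the asserted set, and the parametrization $(A,B)\mapsto r_1+r_2\ep$ is injective because $\{1,\ep\}$ is linearly independent over $\Q$, so the two descriptions agree as sets without repetition.

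This argument is essentially a bookkeeping computation, and I do not expect a genuine obstacle. The only points requiring care are the direction of the basis change and the observation that membership of the $\theta_F$-coordinate in $\tfrac1p\Z$ already forces $B\in\Z$, which is precisely what allows the single congruence $A+sB\equiv0\pmod{t}$ to encode the full integrality condition. One should also note at the outset that $\{1,\theta_F\}$ is a genuine $\Z$-basis of $\cao_F$ (using the standing hypothesis $\cao_F=\Z[\theta_F]$), so that the coordinatewise integrality criterion for $\tfrac1p\cao_F$ is valid.
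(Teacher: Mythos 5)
Your proof is correct and follows essentially the same route as the paper: both arguments are a change-of-basis computation between the $\{1,\ep\}$-coordinates (in which the Shintani conditions are stated) and the $\Z$-basis $\{1,\theta_F\}$ of $\cao_F$ (in which membership in $\tfrac{1}{p}\cao_F$ is transparent), using $\ep = s + t\theta_F$ to identify $B \in [0,tp)\cap\Z$, $A \in (0,tp]\cap\Z$, and the congruence $A + sB \equiv 0 \pmod{t}$. Your write-up is if anything slightly more careful than the paper's about the converse direction and about why integrality of the $\theta_F$-coordinate forces $A, B \in \Z$.
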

    \begin{proof}
        By Lemma \ref{pof}, we have $D_{K/F} = p\cao_F.$ As such, for any element $r_1+r_2\ep \in \frac{1}{p}\cao_F,$ we have 
            \begin{align*}
                r_1 + r_2 \ep = (r_1 + s r_2) + t r_2\theta_F \in \frac{1}{p}\cao_F. 
            \end{align*}
   \black{The set $\{1, \theta_F\}$ constitutes an integral basis of $\cao_F$, meaning we can write any element of the Shintani set as $r_1+r_2\ep = \frac{A'}{p} +  \frac{B}{p}\theta_F \in \frac{1}{p}\cao_F$, for some $A', B \in \Z$. Note that
   \begin{align*}
       \frac{A'}{p} = {r_1+sr_2}\quad \text{and}\quad  \frac{B}{p} = tr_2.
   \end{align*}
   In particular, we have
   \begin{align*}
       r_2 = \frac{B}{tp},
   \end{align*}
   and since $r_2 \in [0,1)$, we see that $B \in [0, tp)$. Additionally, we see that 
   \begin{align*}
       r_1 = \frac{A'}{p}-sr_2 = \frac{tA'-sB}{tp} = \frac{A}{tp}
   \end{align*}
    where $A\coloneqq tA'-sB.$ We know that $r_1 \in (0, 1]$, so $A \in (0, tp]$. Moreover, since $$\frac{A}{tp}+\frac{B}{tp}\ep=\frac{A+sB}{tp}+\frac{B}{p}\theta_F\in\frac{1}{p}\cao_F,$$ we must also have $A+sB\equiv0\pmod{t}.$ From the expression above, we can see that every element of the form $\frac{A}{tp}+\frac{B}{tp}\ep$ with $A,B\in\Z$, $A\in(0,tp]$, $B\in[0,tp)$, $A+sB\equiv0\pmod{t}$ is in the Shintani set. This finishes the proof.}
    \end{proof}}}
\black{Next, we want to identify $R$ with the finite field $\F_{p^2}$. We begin with the work of Barquero-Sanchez, Masri, and Tsai, who proved that $R$ is a finite abelian group with respect to the following operation:
\begin{align*}
    r \oplus r' \coloneqq r + r' + \Z[\ep]
\end{align*}
(see Proposition 4.3 in \cite{wei_lun_stark_units}). This allows us to prove the following proposition relating $R$ to $\F_{p^2}$, a property that is central to our proof of Theorem \ref{thm:1.1}.
\begin{prop}\label{pi_prop}
    The Shintani set $R$ has a structure as a $\Z[\ep]$-module. This structure admits a surjective $\Z[\ep]$-module homomorphism $\pi: R\to \F_{p^2}.$
\end{prop}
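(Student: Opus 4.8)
The plan is to realize the Shintani set $R$ concretely as the quotient group $\frac{1}{p}\cao_F/\Z[\ep]$ and to construct $\pi$ as the natural ``clear denominators'' map into $\cao_F/p\cao_F\cong\F_{p^2}$. First I would make the group structure from Proposition 4.3 of \cite{wei_lun_stark_units} explicit: the operation $r\oplus r'$ returns the unique element of $R$ congruent to $r+r'$ modulo $\Z[\ep]$, so $(R,\oplus)$ is simply a system of coset representatives for $\frac{1}{p}\cao_F/\Z[\ep]$. To confirm the representatives are complete I would compare cardinalities. Lemma \ref{shin_set_construction} shows $|R|=tp^2$, whereas $\Z[\ep]=\Z\oplus t\Z\theta_F$ has index $t$ in $\cao_F$, so
\[
    \left[\tfrac{1}{p}\cao_F:\Z[\ep]\right]=\left[\tfrac{1}{p}\cao_F:\cao_F\right]\cdot[\cao_F:\Z[\ep]]=p^2\cdot t.
\]
Hence the assignment $r\mapsto r+\Z[\ep]$ gives a bijection, and in fact an isomorphism of abelian groups, $R\xrightarrow{\sim}\frac{1}{p}\cao_F/\Z[\ep]$.

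Next I would put a $\Z[\ep]$-module structure on $R$ by transport of structure. Because $\Z[\ep]\subseteq\cao_F$, the fractional ideal $\frac{1}{p}\cao_F$ is a $\Z[\ep]$-module and $\Z[\ep]$ is a $\Z[\ep]$-submodule of it; therefore the quotient $\frac{1}{p}\cao_F/\Z[\ep]$ is a $\Z[\ep]$-module. Pulling this structure back along the isomorphism above makes $R$ a $\Z[\ep]$-module, where $\beta\in\Z[\ep]$ acts by sending $r$ to the representative in $R$ of $\beta r$ modulo $\Z[\ep]$.

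Finally I would define $\pi\colon R\to\cao_F/p\cao_F$ by $\pi(r)\coloneqq pr+p\cao_F$; since $p$ is inert in $\cao_F$, the target is the field $\F_{p^2}$. Equivalently, $\pi$ is the composite of the isomorphism $R\cong\frac{1}{p}\cao_F/\Z[\ep]$, the canonical projection $\frac{1}{p}\cao_F/\Z[\ep]\to\frac{1}{p}\cao_F/\cao_F$ (well defined because $\Z[\ep]\subseteq\cao_F$), and the $\cao_F$-linear isomorphism $\frac{1}{p}\cao_F/\cao_F\xrightarrow{\cdot p}\cao_F/p\cao_F$ given by $\alpha+\cao_F\mapsto p\alpha+p\cao_F$. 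Every map in this composite is $\Z[\ep]$-linear, the projection is surjective, and the flanking maps are isomorphisms, so $\pi$ is a surjective $\Z[\ep]$-module homomorphism.

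The only delicate point is verifying that $\pi$ descends, which I would check directly from the formula $\pi(r)=pr+p\cao_F$: whenever one reduces $r+r'$ or a scalar multiple $\beta r$ back into $R$, the correction term lies in $\Z[\ep]\subseteq\cao_F$, so after multiplying by $p$ it lands in $p\cao_F$ and vanishes in $\cao_F/p\cao_F$. This is exactly what makes $\pi$ respect $\oplus$ and the $\Z[\ep]$-action. Surjectivity is then immediate, since the class $\gamma+p\cao_F$ is the image of the representative of $\frac{\gamma}{p}\in\frac{1}{p}\cao_F$. I expect the main (if modest) obstacle to be organizing the identification of $(R,\oplus)$ with $\frac{1}{p}\cao_F/\Z[\ep]$ — especially the cardinality bookkeeping drawn from Lemma \ref{shin_set_construction} — since once $R$ is described as this abstract quotient, the construction and properties of $\pi$ become essentially formal.
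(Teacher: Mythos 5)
Your construction is, at its core, the same as the paper's: both identify $R$ with $\tfrac{1}{p}\cao_F/\Z[\ep]$, transport the quotient $\Z[\ep]$-module structure onto $R$, and take $\pi$ to be multiplication by $p$ followed by reduction modulo $p\cao_F$, with $\cao_F/p\cao_F\cong\F_{p^2}$ because $p$ is inert. The difference lies in how the two load-bearing facts are justified: the paper simply cites \cite{wei_lun_stark_units} (Proposition 4.1 for $R$ being a complete reduced set of coset representatives, Proposition 4.4 for surjectivity of $\pi$), whereas you prove both from scratch --- completeness via the index computation $[\tfrac{1}{p}\cao_F:\Z[\ep]]=p^2t$ matched against $|R|=tp^2$ from Lemma \ref{shin_set_construction}, and surjectivity by exhibiting the representative of $\gamma/p$ as a preimage of $\gamma+p\cao_F$. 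This buys self-containedness, and your composite description of $\pi$ is arguably more careful than the paper's phrasing: the paper asserts that $r\mapsto pr$ is already an injective $\Z[\ep]$-module homomorphism $R\to\cao_F$, which is not literally true for the quotient module structure on $R$ (the scalar action on $R$ produces correction terms in $\Z[\ep]$); as you observe, those corrections only vanish after multiplying by $p$ and reducing modulo $p\cao_F$. One small gap to patch: equal finite cardinalities do not by themselves make $r\mapsto r+\Z[\ep]$ a bijection; you also need injectivity, i.e., that distinct elements of $R$ lie in distinct cosets. This is immediate --- if $r-r'\in\Z[\ep]$ with $r=r_1+r_2\ep$ and $r'=r_1'+r_2'\ep$ both in $R$, then $r_1-r_1'$ and $r_2-r_2'$ are rational integers lying in $(-1,1)$ (using that $\{1,\ep\}$ is a $\Q$-basis of $F$), hence both vanish --- but it must be said, since without it the cardinality count alone proves nothing.
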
}
\black{\begin{proof}
We begin with the $\Z[\ep]$-module structure on $R$. By definition, the fractional ideal $\frac{1}{p}\cao_F$ is an $\cao_F$-module, and since $\Z[\ep]$ is a subring of $\cao_F$, we observe that $\frac{1}{p}\cao_F$ is a $\Z[\ep]$-module by restriction of scalars. Furthermore, since $\Z[\ep]$ is a $\Z[\ep]$-submodule of $\frac{1}{p}\cao_F$, we have that $\tfrac{1}{p}\cao_F/\Z[\ep]$ is a $\Z[\ep]$-module. Moreover, $R\subset\tfrac{1}{p}\cao_F$ is a complete reduced set of coset representatices for $\tfrac{1}{p}\cao_F/\Z[\ep]$ (see Proposition 4.1 in \cite{wei_lun_stark_units}). Thus $R$ has the structure of a $\Z[\ep]$-module and can be identified with $\tfrac{1}{p}\cao_F/\Z[\ep].$

Since $R \subseteq \frac{1}{p}\cao_F$, multiplication by $p$ defines an injective $\Z[\ep]$-module homomorphism $$R \longrightarrow \cao_F, \hspace{0.5cm} r \longmapsto pr+p\cao_F.$$ If we compose this map with the projection $\cao_F \longrightarrow \cao_F/p\cao_F,$ we obtain the map $$\pi: R \longrightarrow \cao_F/p\cao_F, \hspace{0.5cm} r \longmapsto pr,$$ which is surjective, as shown in Proposition 4.4 of \cite{wei_lun_stark_units}. If $p$ remains inert in $F$, then $p\cao_F$ is prime and thus maximal, so $\cao_F/p\cao_F$ is a finite field. Then, since $F$ is quadratic and $\cao_F=\Z[\theta_F]$, we know that
\begin{align*}
    \cao_F/p\cao_F \cong \Z[\theta_F]/p\Z[\theta_F] \cong \F_{p}[\theta]\cong\F_{p^2}.
\end{align*}
\end{proof}}
\black{\begin{remark}
By the First Isomorphism Theorem, $$R/\ker(\pi) \cong \cao_F/p\cao_F.$$ 
Note that this is an isomorphism of groups, and therefore pertains only to the structure of $R$ as an additive abelian group. We do not require a multiplicative structure within $R$ here; rather we point out that the map $$\pi: R/\ker(\pi) \to \F_{p^2}$$ is bijective. We will make use of this bijective correspondence in the proof of Theorem ~\ref{thm:1.1}.
\end{remark}}
\black{\begin{lemma}\label{kernel_of_phi_lemma} 
The elements of $\ker(\pi)$ are exactly those elements of $R$ which are in $\cao_F.$
\end{lemma}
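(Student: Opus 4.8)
The plan is to unwind the definition of $\pi$ directly, since the statement is essentially a reformulation of what it means for $pr$ to vanish modulo $p\cao_F$. Recall from Proposition \ref{pi_prop} that $\pi\colon R\to\cao_F/p\cao_F$ is given by $r\mapsto pr+p\cao_F$. Hence $r\in\ker(\pi)$ precisely when $pr\in p\cao_F$. The crux is therefore the elementary equivalence $pr\in p\cao_F \iff r\in\cao_F$, which I would prove by cancelling $p$: if $pr=px$ for some $x\in\cao_F$, then $r=x\in\cao_F$ because $\cao_F$ is an integral domain and $p\neq0$, and conversely if $r\in\cao_F$ then clearly $pr\in p\cao_F$.

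First I would record that every $r\in R$ already lies in $\tfrac{1}{p}\cao_F$ by construction of the Shintani set. Combining this containment with the equivalence above, I conclude that $r\in\ker(\pi)$ if and only if $r\in R$ and $r\in\cao_F$; that is, $\ker(\pi)=R\cap\cao_F$, which is exactly the claim that the kernel consists of those elements of $R$ lying in $\cao_F$.

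If a more explicit description is desired, I would feed the characterization of $R$ from Lemma \ref{shin_set_construction} into the condition $r\in\cao_F$: writing $r=\tfrac{A}{tp}+\tfrac{B}{tp}\ep=\tfrac{A+sB}{tp}+\tfrac{B}{p}\theta_F$, membership in $\cao_F=\Z[\theta_F]$ forces $p\mid B$ and $tp\mid(A+sB)$, which pins down the finitely many kernel elements. This computation is not needed for the lemma itself, though it will be convenient when later counting orbits.

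I do not expect a genuine obstacle here: the entire content is the cancellation of $p$ in an integral domain. The only points requiring any care are to state the equivalence in both directions and to invoke $R\subseteq\tfrac{1}{p}\cao_F$, so that the kernel is correctly identified with $R\cap\cao_F$ rather than with all of $\cao_F$.
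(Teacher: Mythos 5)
Your proposal is correct and follows the same route as the paper: unwinding $\pi(r)=0$ to $pr\in p\cao_F$ and then to $r\in\cao_F$, which is exactly the paper's chain of equivalences (you simply make the cancellation-in-an-integral-domain step explicit, which the paper leaves implicit). The extra explicit description of the kernel elements you sketch at the end is not needed here; the paper handles that separately in Lemma \ref{explicit_kernel}.
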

\begin{proof}
Assume $r \in \ker(\pi)$. Then, since $\Im(\pi) = \cao_F/p\cao_F$, we have 
\begin{align*}
    \pi(r) =0 \iff pr \in p\cao_{F} \iff r\in \cao_{F}. 
\end{align*}\end{proof}}

\black{Now we are in a position to explicitly describe the elements in $\ker(\pi)$.}
\black{\begin{lemma}\label{explicit_kernel}
    The kernel of the map $\pi$ is given by
    \begin{align*}
        \ker(\pi) = \left\{1  - \bigg\{\frac{si}{t}\bigg\}_{[0,1)}+ \frac{i}{t}\ep\;|\; 0\leq i \leq t-1\right\}
    \end{align*}
    In particular, we have that $|\ker(\pi)| = t$.
\end{lemma}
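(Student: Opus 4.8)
The plan is to combine Lemma \ref{kernel_of_phi_lemma}, which identifies $\ker(\pi)$ with the set of elements of $R$ that happen to lie in $\cao_F$, with the explicit coordinate description of such elements afforded by the integral basis $\{1,\theta_F\}$ of $\cao_F$. So the first step is to take an arbitrary $r = r_1 + r_2\ep \in \ker(\pi)$, where by definition of $R$ we have $r_1 \in (0,1]$, $r_2 \in [0,1)$, and $r \in \tfrac{1}{p}\cao_F$, and rewrite it in this basis. Substituting $\ep = s + t\theta_F$ gives $r = (r_1 + s r_2) + t r_2\,\theta_F$, so the condition $r \in \cao_F = \Z[\theta_F]$ is equivalent to the two integrality conditions $t r_2 \in \Z$ and $r_1 + s r_2 \in \Z$.

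Next I would solve these two conditions in turn. From $t r_2 \in \Z$ together with $r_2 \in [0,1)$, the only possibilities are $r_2 = i/t$ with $0 \le i \le t-1$, which simultaneously pins down the second coordinate and produces the count of $t$ candidates. Substituting $r_2 = i/t$ into the second condition, $r_1 + si/t \in \Z$ forces $r_1 \equiv -si/t \pmod 1$; since $r_1$ is constrained to the half-open interval $(0,1]$, there is a unique admissible value, namely $r_1 = 1 - \{si/t\}_{[0,1)}$. Conversely, every pair $(r_1,r_2)$ of this form yields an element of $\cao_F$ meeting the interval constraints, hence an element of $R \cap \cao_F = \ker(\pi)$; this establishes both inclusions and gives exactly the displayed description of $\ker(\pi)$.

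Finally I would note that distinct values of $i$ produce distinct second coordinates $r_2 = i/t$, so the $t$ listed elements are pairwise distinct and $|\ker(\pi)| = t$. The only point requiring genuine care is the interval convention: because $r_1$ ranges over $(0,1]$ rather than $[0,1)$, the correct representative of $-si/t \bmod 1$ is $1 - \{si/t\}_{[0,1)}$ rather than $\{-si/t\}$, and in particular the boundary case $t \mid si$ must give $r_1 = 1$ (not $0$). This is precisely where a sign or endpoint error could slip in, so I would check the two endpoints explicitly. As a consistency check, the value $|\ker(\pi)| = t$ is exactly what is forced by $|R| = |\ker(\pi)|\cdot|\F_{p^2}| = tp^2$, the cardinality predicted by Lemma \ref{shin_set_construction}.
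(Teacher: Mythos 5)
Your proof is correct and follows essentially the same route as the paper's: both reduce to Lemma~\ref{kernel_of_phi_lemma} (i.e.\ $\ker(\pi)=R\cap\cao_F$), expand $\ep = s+t\theta_F$, and translate membership in $\cao_F=\Z[\theta_F]$ into integrality of the two coordinates, with the $\theta_F$-coefficient forcing $r_2 = i/t$ and the constant coefficient then uniquely forcing $r_1 = 1-\{si/t\}_{[0,1)}$ within $(0,1]$. The only difference is cosmetic: the paper runs the same modular arithmetic through the integer parametrization $(A,B)$ of Lemma~\ref{shin_set_construction} (where $p\mid B$ gives $B=pi$ and $A\equiv -sB \pmod{tp}$), whereas you work directly with the rational coordinates $(r_1,r_2)$, which is the identical computation rescaled by $tp$.
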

\begin{proof}
    Consider $r\in\ker(\pi).$ Using Lemma \ref{shin_set_construction} and noting that $\ep=s+t\theta_F,$ we see that $r$ has the form
    \begin{align*}
        r=\frac{A}{tp}+\frac{B}{tp}\ep=\frac{A+sB}{tp}+\frac{B}{p}\theta_F.
    \end{align*}
    By Lemma \ref{kernel_of_phi_lemma}, $r\in\ker(\pi)\iff r\in R\cap\cao_F,$ so we have that $\tfrac{A+sB}{tp}\in\Z$ and $\tfrac{B}{p}\in\Z.$ The second condition implies $p|B,$ and since $B\in[0,tp)\cap\Z$ by Lemma \ref{shin_set_construction}, we see that $B=pi$ for $i\in[0,t)\cap\Z.$ The condition that $\tfrac{A+sB}{tp}\in\Z$ implies that $A\equiv -sB=spi\pmod{tp}.$ Since $A\in(0,tp]\cap\Z$ by Lemma \ref{shin_set_construction}, $A$ is uniquely determined by $B.$ More precisely,
    \begin{align*}
        A=tp-(spi\pmod{tp})
    \end{align*} where $spi\pmod{tp}$ is the least positive residue of $spi\in\Z$ modulo $tp.$ We can further simplify this expression; since $$spi \pmod{tp} = spi - tp\bigg\lfloor \frac{spi}{tp}\bigg\rfloor,$$ we have that 
    \begin{align*}
        \ker(\pi)\subseteq\left\{1  - \bigg\{\frac{si}{t}\bigg\}_{[0,1)}+ \frac{i}{t}\ep \;:\; i\in[0,t)\cap\Z\right\},
    \end{align*}
     where $\{\cdot\}$ denotes the fractional part function $\{x\}_I$, defined as the unique element of $I$ satisfying $x-\{ x\}_I \in \Z$.  The converse containment is seen immediately from the fact that that $\ep=s+t\theta_F$ and the definition of $spi\pmod{tp}.$ Thus, $\ker(\pi)$ has size exactly $t.$
\end{proof}}\black{\noindent We will denote the elements of $\ker(\pi)$ as $$\kappa_i \coloneqq 1  - \bigg\{\frac{si}{t}\bigg\}_{[0,1)}+ \frac{i}{t}\ep \quad\text{for}\quad i\in\{0,1,\ldots,t-1\}.$$}
\black{
\section{Proof of Theorem 1.1}\label{section_1.1_proof}
Equipped with these facts about the Shintani set described in the previous section, we now prove Theorem ~\ref{thm:1.1}. Our proof relies on features of the structure of the Shintani set which come from from the bijection between $R_{F,p}/\ker(\pi)$ and $\F_{p^2}$, as well as some properties we derive of the Hecke character across the Shintani set. Again, to \black{simplify} notation, we let $\ep = \ep_F$, $R =R_{F,p}$, and $\rho = \rho_{F,p}.$}

\black{
We are now able to describe the Shintani set using the multiplicative structure of $\F_{p^2}^\times = \langle \rho +p\cao_F\rangle$. Using the bijection from $R/\ker(\pi)$ to $\F_{p^2}$, we have
\begin{align*}
    R = \ker(\pi) \sqcup \left(\bigsqcup_{m=1}^{p^2-1} \pi^{-1}(\rho^m+p\cao_F)\right).
\end{align*}
For each $m$ between $1$ and $p^2-1$, choose one element in the coset $\pi^{-1}(\rho^m+p\cao_F)$, which we denote $\tilde{x}(m) + \tilde{y}(m)\ep \in R$. 

Next, we explicitly calculate each $\tilde{x}(m)$ and $\tilde{y}(m)$ in terms of $\rho^m$. Note that $\{1, \theta_F\}$ is a $\F_{p}$-basis of $\F_{p^2}$, so we can write $\rho^m+p\cao_F \coloneqq x(m) + y(m)\theta_F$ for some integers $x(m), y(m)$. Observe that, since $\ep = s + t\theta_F$, we have
\begin{align*}
    \rho^m+p\cao_F = x(m) - \frac{s\cdot y(m)}{t} + \frac{y(m)}{t}\ep.
\end{align*}
Under multiplication by $p$ and reduction modulo $p$, the point $$\frac{x(m)}{p} - \frac{s\cdot y(m)}{tp} + \frac{y(m)}{tp}\ep \in \frac{1}{p}\cao_F$$ maps to $\rho^m+p\cao_F$. Thus, if we subtract a suitable element of $\Z[\ep]$ from this point, we obtain a point $\tilde{x}(m) + \tilde{y}(m)\ep \in R$ that is a preimage of $\pi^{-1}(\rho^m+p\cao_F)$. In particular, we see that
\begin{align*}
    \tilde{x}(m) = \left\{\frac{{x(m)}}{p} - \frac{s\cdot y(m)}{tp}\right\}_{(0,1]}, \hspace{0.5cm} \tilde{y}(m) = \left\{\frac{y(m)}{tp}\right\}_{[0,1)}.
\end{align*}
 
Since $\pi(\tilde{x}(m) + \tilde{y}(m)\ep) = \rho^m+p\cao_F$, we can construct the entire coset from this element:
\begin{align*}
    \pi^{-1}(\rho^m+p\cao_F) = \bigg\{(\tilde{x}(m) + \tilde{y}(m)\ep) \oplus \kappa_i : 1 \le i \le t\bigg\}.
\end{align*}
For simplicity, we write 
\begin{align}
    \tilde{x}_i(m) + \tilde{y}_i(m)\ep \coloneqq (\tilde{x}(m) + \tilde{y}(m)\ep) \oplus \kappa_i.
\end{align}
Using our explicit construction of $\ker(\pi)$ given in Lemma ~\ref{explicit_kernel}, we can similarly explicitly construct each $\tilde{x}_i(m)$, $\tilde{y}_i(m)$. We see that \black{
\begin{align*}
    \tilde{x}_i(m) &= \left\{\frac{x(m)}{p} - \frac{s\cdot y(m)}{tp} + 1  - \bigg\{\frac{si}{t}\bigg\}_{[0,1)} \right\}_{(0,1]}\\
    \tilde{y}_i(m) &= \left\{\frac{y(m)}{tp}+ \frac{i}{t}\right\}_{[0,1)}.
\end{align*}}
Thus, we can write $R$ as the following disjoint union:
\begin{align*}
     R = \ker(\pi) \sqcup \left(\bigsqcup_{m=1}^{p^2-1}\bigg\{\tilde{x}_i(m) + \tilde{y}_i(m)\ep : 1 \le i \le t\bigg\}\right).
\end{align*}}\black{By Proposition~\ref{shintani_simplified}, we simplify Shintani's class number formula to obtain
\begin{align}\label{thm1_unsimplified}
    h_{K} = \frac{1}{2}\sum_{\substack{1\leq m\leq p^2-1\\ 1\leq i \leq t}} \chi_{K/F}\bigg((\tilde{x}_i(m) + \tilde{y}_i(m)\ep) \cdot p\cao_F\bigg)
    \cdot\sum_{\substack{0\leq l_1,l_2\leq2\\ l_1+l_2=2}}{\frac{B_{l_1}(\tilde{x}_i(m))}{l_1!}\frac{B_{l_2}(\tilde{y}_{i}(m))}{l_2!}\tre^{l_2-1}}.
\end{align}}

\black{We can also simplify the Hecke character term. Consider any element $r_1+r_2\ep\in R$ and any element $k_1+k_2\ep \in \ker(\pi)$. By Lemma ~\ref{kernel_of_phi_lemma}, we have $k_1+k_2\ep\in\cao_F.$ Since the Hecke character has conductor $p\cao_F,$ we have
\begin{align*}      \chi_{K/F}\bigg((r_1+r_2\ep+k_1+k_2\ep)p\cao_F\bigg)=\chi_{K/F}\bigg(\left((r_1+r_2\ep)p\cao_F\right)+p\cao_F\bigg) =\chi_{K/F}\bigg((r_1+r_2\ep)p\cao_F\bigg).
\end{align*}
Thus, the value of $\chi_{K/F}(r_1 + r_2\ep)$ depends only on the coset of $r_1 + r_2\ep$ in $R/\ker(\pi)$. \black{
Therefore, we have that 
\begin{align*}    \chi_{K/F}\bigg((\tilde{x}_i(m)+\tilde{y}_i(m)\ep)p\cao_F\bigg) &= \chi_{K/F}\bigg((\tilde{x}(m)+\tilde{y}(m)\ep)p\cao_F\bigg).
\end{align*}
By definition, $$p(\tilde{x}(m)+\tilde{y}(m)\ep)-\rho^m \in p\cao_F.$$
Using this and the multiplicativity of the Hecke character, we get
\begin{align*}   \chi_{K/F}((\tilde{x}_i(m)+\tilde{y}_i(m)\ep)p\cao_F) = \chi_{K/F}((\rho^m+p\cao_F)\cao_F) = \chi_{K/F}((\rho+p\cao_F)\cao_F)^m.
\end{align*}

If $\chi_{K/F}((\rho+p\cao_F)\cao_F) = 0$, then since $(\cao_F/p\cao_F)^{\times} = \langle \rho+p\cao_F \rangle$, we would have that $\chi_{K/F}(rp\cao_F)=0$ for all $r\in R.$ However, this contradicts the definition of $\chi_{K/F}$. Moreover, $\chi_{K/F}((\rho+p\cao_F)\cao_F) \neq 1,$ since we would then similarly have that  $\chi_{K/F}(rp\cao_F)=1$ for all $r\in R\setminus\cao_F,$ but $\chi_{K/F}$ is a non-trivial character by construction. Thus, we see that $\chi_{K/F}((\rho+p\cao_F)\cao_F) = -1,$ which implies
\begin{align*}
\chi_{K/F}\bigg((\tilde{x}_i(m)+\tilde{y}_i(m)\ep)p\cao_F\bigg) = \chi_{K/F}((\rho+p\cao_F)\cao_F)^m=(-1)^m.
\end{align*}}Thus, Equation \ref{thm1_unsimplified} simplifies further:
\begin{align*}
    h_{K} = \frac{1}{2}\sum_{\substack{1\leq m\leq p^2-1\\1\leq i \leq t}}(-1)^{m}\sum_{\substack{0\leq l_1,l_2\leq2\\ l_1+l_2=2}}{\frac{B_{l_1}(\tilde{x}_i(m))}{l_1!}\frac{B_{l_2}(\tilde{y}_i(m))}{l_2!}\tre^{l_2-1}}.
\end{align*}
\black{Next, we simplify the Bernoulli polynomial part of the class number formula. We consider 
\begin{align*}
    &\sum_{\substack{0\leq l_1,l_2\leq2\\ l_1+l_2=2}}{\frac{B_{l_1}(\tilde{x}_i(m))}{l_1!}\frac{B_{l_2}(\tilde{y}_i(m))}{l_2!}\trep^{l_2-1}} \\& \hspace{1 cm}= \trep\frac{\tilde{x}_i(m)^2-\tilde{x}_i(m)+1/6}{2} + 2\left(\tilde{x}_i(m)-\frac{1}{2}\right)\left(\tilde{y}_i(m)-\frac{1}{2}\right) +\trep\frac{\tilde{y}_i(m)^2-\tilde{y}_i(m)+1/6}{2}\\
    &\hspace{1 cm}= \frac{\trep}{2} \left(\tilde{x}_i(m) - \frac{1}{2}\right)^2 + 4\left(\tilde{x}_i(m) - \frac{1}{2}\right)\left(\tilde{y}_i(m) - \frac{1}{2}\right)  + \frac{\trep}{2} \left(\tilde{y}_i(m) - \frac{1}{2}\right)^2 + c_0,
\end{align*}
for some constant $c_0$. Since for any constant $c$, $$\sum_{\substack{1\leq m \leq p^2-1\\ 1\leq i \leq t}}(-1)^m \cdot c =0,$$ 
we can ignore the constant term $c_0$ that arises in the inner sum of Bernoulli polynomials.} We can write the class number $h_K$ as 
\begin{equation}
\label{longscaryeqn}
    h_{K} = \frac{1}{4}\sum_{\substack{1\leq m\leq p^2-1\\1\leq i \leq t}}(-1)^{m} \Bigg[\trep \left(\tilde{x}_i(m) - \frac{1}{2}\right)^2 + 4\left(\tilde{x}_i(m) - \frac{1}{2}\right)\left(\tilde{y}_i(m) - \frac{1}{2}\right)
    + \trep \left(\tilde{y}_i(m) - \frac{1}{2}\right)^2\Bigg].
\end{equation}\black{If we define
\begin{equation}
\label{tilde_defns}
    x_i(m) \coloneqq tp(2\tilde{x}_i(m)-1) \quad\text{and}\quad y_i(m) \coloneqq tp(2\tilde{y}_i(m)-1),
\end{equation}
we can then rewrite the above equation as
\begin{align}
\label{actualscaryeqn}
    h_{K} &= \frac{1}{16t^2p^2}\sum_{\substack{1\leq m \leq p^2-1\\ 1\leq i\leq t}}(-1)^m \bigg[\tr_{F/\Q}(\ep)\Big({x}_i(m)\Big)^2 + 4\Big({x}_i(m)\Big)\Big({y}_i(m)\Big) + \tr_{F/\Q}(\ep)\Big({y}_i(m)\Big)^2\bigg].
\end{align}
Finally, by defining the quadratic form $$Q_{F}(Y_1, Y_2)\coloneqq \tr_{F/\Q}(\ep)Y_1^2+4Y_1Y_2+\tr_{F/\Q}(\ep)Y_2^2,$$ we rewrite \eqref{actualscaryeqn} as 
\begin{equation}
 \begin{aligned}
     h_K=\frac{1}{16t^2 p^2}\sum_{\substack{1\leq m\leq p^2-1\\ 1\leq i\leq t}}(-1)^mQ_{F}\left({x}_i(m),{y}_i(m)\right).
 \end{aligned}
 \end{equation}}}\black{The last step is to derive recurrence relations for $x(m), y(m)$, \black{the coefficients of $\rho^m +p\cao_F = x(m) + y(m)\theta_F$}. The minimal polynomial of $\theta_F$ is $x^2 - \tr(\theta_F)x + \norm_{F/\Q}(\theta_F)$, which implies $$\theta_F^2 = \tr_{F/\Q}(\theta_F)\theta_F - \norm_{F/\Q}(\theta_F).$$ To simplify notation, let $T=\tr_{F/\Q}(\theta_F)$ and $N=\norm_{F/\Q}(\theta_F)$. Since $\rho\coloneqq a+b\ep,$ we have the initial conditions $x(1) = a$ and $y(1) = b$. Then, we get
\begin{align*}
    \rho^{m+1} = x(m+1) + y(m+1)\theta_F &= \big(x(m) + y(m)\tf\big)\cdot \big(a+b\theta_F\big)\\
    &= a\cdot x(m)-Nb\cdot y(m)+\bigg(b\cdot x(m)+\bigg(a+Tb\bigg)\cdot y(m)\bigg)\theta_F.
\end{align*}}\black{This implies the following recurrence relations:
    \begin{align*}
        x(m+1) &= a\cdot x(m)-Nb\cdot y(m)\\
        y(m+1) &= b\cdot x(m)+(a+Tb)\cdot y(m).
    \end{align*}
Then, consider functions $X(z)$, $Y(z)$ given by
\begin{align}
        X(z) = \sum_{m=1}^\infty x(m)\cdot z^m,\quad Y(z) = \sum_{m=1}^\infty y(m)\cdot z^m.
    \end{align}
Using our recurrence relations, we can set up a system of equations to find explicit expressions for $X(z), Y(z)$ as rational functions determined by $x(1)$ and $y(1)$. We see that
\begin{align}\notag
        X(z) &= z\cdot \left[a\cdot X(z) - N b\cdot Y(z)\right] + az\\\notag
        Y(z) &= z\cdot \left[ b\cdot X(z) + \bigg(a + Tb\bigg)\cdot Y(z)\right] + bz,
\end{align}
which gives
\begin{align}\notag
    X(z) &= \dfrac{az - (a^2 + abT + Nb^2)z^2}{(a^2 + abT + Nb^2)z^2 - (2a + bT)z + 1} \\\notag
    Y(z) &= \dfrac{bz}{(a^{2} + abT + b^{2}N)z^{2} - (2a + bT)z + 1} .
\end{align}
We simplify these by letting $C_{F,p} \coloneqq a^{2} + abT + Nb^{2}$ and $D_{F,p} \coloneqq 2a + bT$ to get
\begin{align}\notag
    X(z) = \dfrac{az - C_{F,p}z^2}{C_{F,p}z^2 - D_{F,p}z + 1},\quad Y(z) =  \dfrac{bz}{C_{F,p}z^{2} - D_{F,p}z + 1}.
\end{align}}\black{Note that the coefficients $x(m), y(m)$ of the power series of these rational functions correspond to those $x(m), y(m)$ which we use to generate each $x_i(m), y_i(m)$ using the formulas \begin{equation*}
\begin{aligned}
    x_i(m) \coloneqq tp(2\tilde{x}_i(m)-1) \quad\text{and}\quad y_i(m) \coloneqq tp(2\tilde{y}_i(m)-1).
\end{aligned}
\end{equation*}This concludes the proof of Theorem ~\ref{thm:1.1}.
}
\black{
\section{Proof of Theorem 1.2}\label{section4}
Here we prove Theorem $~\ref{thm 1.2},$ which relies heavily on the structure of the Shintani set as a $\Z[\ep_F]$-module and the related attributes of the base-$\ep_F$ expansions of its elements. Through a series of preliminary lemmas, we set up the proof of Theorem ~\ref{thm 1.2} by relating the base-$\ep_F$ expansion of $1/p$ to the orbit of elements in $R_{F,p}\setminus \ker(\pi)$ under the action of $\ep_F$. This allows us to derive a finite sum analogous to Girstmair's \eqref{GirstmairFormula}, in which the number of summands is equal to the period length of the base $\ep_F$ expansion of $1/p$.

\black{
Thoughout this section, we fix a totally real quadratic field $F$ and an imaginary quadratic extension $K \coloneqq F(\sqrt{-p})$, where $p \equiv 3\pmod{4}$ and $p$ remains inert in $\cao_F$. To \black{simplify} notation, we also let
\begin{align*}
    \ep \coloneqq \ep_F \quad\text{and}\quad
    R \coloneqq R_{F,p}.
\end{align*} Additionally, we denote $r \in R$ as $r \coloneqq r_1 + r_2\ep$.
}\black{
\subsection{Shintani Cycles}
Recall from Section ~\ref{sec2} that we can identify $R$ with $\frac{1}{p}\cao_F/\Z[\ep]$ to make it into a $\Z[\ep]$ module. In particular, the multiplicative group $\langle \ep \rangle$ acts on $\frac{1}{p}\cao_F$ via scalar multiplication. If we denote the map for this group action by $$\mu: \langle \ep \rangle \times \frac{1}{p}\cao_F \longrightarrow \frac{1}{p}\cao_F,$$ we can compose $\mu$ with the projection map $$\nu: \frac{1}{p}\cao_F \longrightarrow \frac{1}{p}\cao_F/\Z[\ep]$$ to yield $$\mu' \coloneqq \nu\circ\mu: \langle \ep \rangle \times \frac{1}{p}\cao_F \longrightarrow \frac{1}{p}\cao_F/\Z[\ep].$$ Note that, since $\nu$ is a $\Z[\ep]$-module homomorphism, $\mu'$ constitutes a group action of $\langle\ep\rangle$ on $\frac{1}{p}\cao_F/\Z[\ep].$
\begin{lemma} The map
    \begin{align*}
    \overline{\mu}: \langle \ep \rangle \times \frac{1}{p}\cao_F/\Z[\ep] \longrightarrow \frac{1}{p}\cao_F/\Z[\ep], \hspace{0.3 cm}
    (\ep, \alpha + \Z[\ep]) \longmapsto \mu'(\ep, \alpha)
    \end{align*}is a well-defined group action.
\end{lemma}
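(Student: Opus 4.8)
The plan is to verify the two standard requirements in turn: that $\overline{\mu}$ is independent of the choice of coset representative in its second argument, and that the resulting map satisfies the group action axioms. The single structural fact that does all the work is that $\Z[\ep]$ is a ring containing $\ep$, hence closed under multiplication by every power $\ep^n$; this is precisely what allows scalar multiplication to descend through the projection $\nu$.

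First I would check well-definedness. Suppose $\alpha + \Z[\ep] = \beta + \Z[\ep]$, so that $\alpha - \beta \in \Z[\ep]$. For any $\ep^n \in \langle \ep \rangle$ we have $\ep^n \in \Z[\ep]$, and since $\Z[\ep]$ is closed under multiplication, $\ep^n(\alpha - \beta) \in \Z[\ep]$. Therefore $\ep^n \alpha$ and $\ep^n \beta$ lie in the same coset of $\Z[\ep]$, which gives $\mu'(\ep^n, \alpha) = \nu(\ep^n \alpha) = \nu(\ep^n \beta) = \mu'(\ep^n, \beta)$. Hence the value $\overline{\mu}(\ep^n, \alpha + \Z[\ep])$ does not depend on the representative chosen, and $\overline{\mu}$ is well-defined.

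It then remains to confirm the group action axioms, which follow from the definition $\overline{\mu}(\ep^n, \alpha + \Z[\ep]) = \nu(\ep^n \alpha)$ together with the associativity of scalar multiplication on $\frac{1}{p}\cao_F$. For the identity, $\overline{\mu}(1, \alpha + \Z[\ep]) = \nu(\alpha) = \alpha + \Z[\ep]$. For compatibility, given $\ep^i, \ep^j \in \langle \ep \rangle$ I would compute
\begin{align*}
\overline{\mu}\big(\ep^i, \overline{\mu}(\ep^j, \alpha + \Z[\ep])\big) = \overline{\mu}\big(\ep^i, \nu(\ep^j\alpha)\big) = \nu\big(\ep^i(\ep^j\alpha)\big) = \nu(\ep^{i+j}\alpha) = \overline{\mu}(\ep^{i+j}, \alpha + \Z[\ep]),
\end{align*}
where the second equality is an application of the definition of $\overline{\mu}$ to the coset $\nu(\ep^j\alpha)$ using its representative $\ep^j\alpha$, an application that is legitimate precisely because $\overline{\mu}$ has just been shown to be well-defined. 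This establishes that $\overline{\mu}$ is a group action.

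The argument presents no genuine obstacle; the only point requiring care is that the closure property of $\Z[\ep]$ is invoked twice — once directly to prove well-definedness, and once again implicitly in the compatibility computation, to guarantee that evaluating $\overline{\mu}$ at the coset $\nu(\ep^j\alpha)$ via the representative $\ep^j\alpha$ is unambiguous. Making this dependence explicit, rather than silently identifying cosets with representatives, is the main thing to get right in the write-up.
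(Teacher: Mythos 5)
Your proof is correct and takes essentially the same route as the paper's: both reduce the lemma to well-definedness of $\overline{\mu}$, established by the observation that $\ep^n(\alpha-\beta)\in\Z[\ep]$ whenever $\alpha-\beta\in\Z[\ep]$, with the action axioms then following from ordinary multiplication on $\frac{1}{p}\cao_F$ (the paper inherits them from $\mu'$ rather than re-verifying them as you do). The only point both you and the paper leave implicit is that closure under multiplication by \emph{negative} powers of $\ep$ uses $\ep^{-1}=\trep-\ep\in\Z[\ep]$, which holds because $\ep$ is a unit of norm $1$.
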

\begin{proof}
    Since $\mu'$ is a group action, it is sufficient to show that $\overline{\mu}$ is well-defined. 
     Take $\alpha, \alpha' \in \frac{1}{p}\cao_F$ such that $\alpha + \Z[\ep] = \alpha' + \Z[\ep]$. Thus, for any $n \in \Z$, we have
     \begin{align*}
         \overline{\mu}(\ep^n,\alpha + \Z[\ep]) &= \mu'(\ep^n,\alpha + \Z[\ep])\\
         &= \ep^n\alpha + \Z[\ep] = \ep^n\alpha' + \Z[\ep]\\
         &= \overline{\mu}(\ep^n, \alpha' + \Z[\ep]).
     \end{align*}
     Thus, $\overline{\mu}$ is well-defined, so $\overline{\mu}$ constitutes a group action of $\langle \ep \rangle$ on $\frac{1}{p}\cao_F/\Z[\ep]$.
\end{proof}

We know from Proposition 4.1 in \cite{wei_lun_stark_units} that $R$ is a complete and reduced set of representatives of $\frac{1}{p}\cao_F/\Z[\ep]$. Thus, we obtain a group action $\langle\ep\rangle \curvearrowright R$ given by 
$(\ep,r) \mapsto \ep\ast r,$ where $\ep\ast r = \ep r + z,$ and $z$ is the unique element of $\Z[\ep]$ such that $\ep r + z \in R$. \black{We define the \emph{Shintani cycle} of any element $r\in R$ to be the orbit of $r$ under this action, and we denote this set as $C_r \coloneqq \langle \ep \rangle \ast r.$ }}
\black{\begin{remark}
Note that $\ep \ast r \in \cao_F \iff r \in \cao_F$. Thus, for any $r \in R \cap \cao_F$, every element in the Shintani cycle of $r$ is an element of $\cao_F$. We will call Shintani cycles containing elements in $R \setminus \cao_F$ the \emph{nontrivial Shintani cycles} of $R$. \black{We refer to Shintani cycles of elements in $R\cap\cao_F$ as \emph{trivial Shintani cycles} because the elements in these cycles are weighted by a factor of $0$ in Shintani's class number formula (see the remark in Section \ref{thm1.2proof_subsection}), and hence for our purposes are ``trivial."}
\end{remark}}

\subsection{Epsilon Expansions}\label{ep_exp_section}
A base-$\ep$ expansion is an analogue to the usual decimal expansion. The base-$\ep$ expansion of any element $\alpha\in F$ is computed in the following way. Let $n\coloneqq\lfloor\log_{\ep}(\alpha)\rfloor.$ Then we have $$\alpha=a_n\ep^n+a_{n-1}\ep^{n-1}+\ldots+a_0+a_{-1}\ep^{-1}+\ldots,$$where 
\begin{align*}
    a_n\coloneqq\lfloor\alpha/\ep^n\rfloor,
    a_{n-1}\coloneqq\lfloor(\alpha-a_n\ep^n)/\ep^{n-1}\rfloor,\ldots, a_i\coloneqq\lfloor(\alpha-a_n\ep^n-\ldots-a_{i+1}\ep^{i+1})/\ep^{i}\rfloor, \ldots
\end{align*}
We observe that $\ep$ is an algebraic integer which is real since $F$ is real quadratic, and that $\ep$ must be $>1$ since it is a totally positive fundamental unit. Moreover, $\ep$ must have Galois conjugate with absolute value $<1$ since $F$ is a real quadratic field and $\ep$ has norm $1$. Thus, $\ep$ is a Pisot number by definition, and by consequence, Theorem $3.1$ in \cite{pisot} shows that any element of $R$ has an eventually periodic base-$\ep$ expansion. 

\black{
For some $\alpha \in F$ whose base $\ep$ expansion can be written as $$\alpha = a_n\ep^n+\ldots+a_0+a_{-1}\ep^{-1}+\ldots+a_{-k}\ep^{-k}+\overline{a_{-k-1}\ep^{-k-1} + \ldots + a_{-k-P_\alpha}\ep^{-k-P_\alpha}},$$
we call $P_\alpha$ the \emph{period length} of the base-$\ep$ expansion of $\alpha$. Additionally, we will call the ordered set $$\{a_{-k-1}, \ldots, a_{-k-P_\alpha}\}$$ the \emph{period set} of the base-$\ep$ expansion of $\alpha$.} We can further observe that any element of $F$ whose base-$\ep$ expansion is finite is an element of $\Z[\ep]$, by the following argument.
\begin{lemma}\label{finite_ep_exp}
    If $\alpha \in F$ has a finite base-$\ep$ expansion, then $\alpha \in \Z[\ep]$.
\end{lemma}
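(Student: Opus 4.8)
The plan is to prove the contrapositive-free direct statement: given a finite base-$\ep$ expansion of $\alpha$, I want to exhibit $\alpha$ as a $\Z$-linear combination of powers of $\ep$ that lies in $\Z[\ep]$. Write
\begin{align*}
    \alpha = a_n\ep^n + a_{n-1}\ep^{n-1} + \ldots + a_0 + a_{-1}\ep^{-1} + \ldots + a_{-k}\ep^{-k},
\end{align*}
a finite sum with each $a_i \in \Z$ (by definition the digits are nonnegative integers bounded by $\lfloor \ep\rfloor$). The nonnegative-power part $a_n\ep^n + \ldots + a_0$ is already visibly in $\Z[\ep]$, so the entire problem reduces to showing that the negative-power tail $\sum_{j=1}^{k} a_{-j}\ep^{-j}$ lies in $\Z[\ep]$.

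The key observation I would use is that $\ep$ is a unit in $\cao_F$ of norm $1$: since $\ep$ is a totally positive fundamental unit of the real quadratic field $F$, we have $\norm_{F/\Q}(\ep) = \ep\cdot\ep' = 1$, where $\ep'$ is the Galois conjugate. Hence $\ep^{-1} = \ep' = \tr_{F/\Q}(\ep) - \ep \in \Z[\ep]$, because $\ep' = \tr_{F/\Q}(\ep) - \ep$ and $\tr_{F/\Q}(\ep)\in\Z$. This single fact does all the work: $\ep^{-1}\in\Z[\ep]$ immediately gives $\ep^{-j} = (\ep^{-1})^j \in \Z[\ep]$ for every $j\geq 1$, since $\Z[\ep]$ is a ring and is therefore closed under multiplication and integer combinations.

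With that in hand I would conclude: each term $a_{-j}\ep^{-j}$ is an integer multiple of an element of $\Z[\ep]$, hence lies in $\Z[\ep]$; the finite sum of such terms is in $\Z[\ep]$; and adding the nonnegative-power part (also in $\Z[\ep]$) keeps us inside the ring. Therefore $\alpha\in\Z[\ep]$, as desired.

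I do not anticipate a genuine obstacle here; the only point requiring a moment of care is justifying $\ep^{-1}\in\Z[\ep]$, which rests on the norm of the fundamental unit being $+1$ rather than $-1$. This is guaranteed precisely because we have chosen $\ep$ to be totally positive (so both $\ep$ and $\ep'$ are positive, forcing $\ep\ep' = +1$); it is worth stating this explicitly since for a general fundamental unit the norm could be $-1$, in which case $\ep^{-1} = -\ep'$ but the conclusion $\ep^{-1}\in\Z[\ep]$ still holds by the identical trace computation. Once this closure property is recorded, the rest is a one-line ring-theoretic argument.
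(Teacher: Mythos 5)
Your proposal is correct and rests on the same key identity as the paper's own proof, namely $\ep^{-1} = \trep - \ep \in \Z[\ep]$, which comes from the norm-one minimal polynomial $x^2 - \trep x + 1$ of $\ep$. The paper packages this as an explicit inductive replacement procedure (also using $\ep^2 = \trep\ep - 1$ to rewrite positive powers in terms of $1$ and $\ep$), whereas you invoke closure of the ring $\Z[\ep]$ under multiplication to absorb all the terms $a_{-j}\ep^{-j}$ at once; this is the same argument, slightly streamlined.
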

\begin{proof}
If $\gamma$ has a finite base-$\ep$ expansion, we can express it as
    \begin{align*}
        \gamma=\sum_{i=K_1}^{K_2}m_i\ep^i
    \end{align*}
    where $K_1, K_2$ are integers. Using that $\ep^2=\trep\ep-1$ and that $\ep^{-1}=\trep-\ep$, we can perform the following replacement on any term $m_k\ep^k$ where $k\neq 0$ or $1$:
    \begin{align*}
        m_k\ep^k = \begin{cases}
            m_k(\trep\ep-1)^{k/2} & \text{if }k \text{ is an even positive integer}\\
            m_k(\trep\ep-1)^{(k-1)/2}\ep & \text{if }k \text{ is an odd positive integer}\\
            m_k(\trep-\ep)^{-k} & \text{if }k \text{ is a negative integer}.
        \end{cases}
    \end{align*}
    The third equality implies that any negative integer power of $\ep$ can be converted to a linear combination of positive integer powers of $\ep$. Therefore, it suffices to show that a linear combination of positive integer powers of $\ep$ can be expressed as an element of $\Z[\ep]$. The first two equalities guarantee that any positive power of $\ep$ can be expressed a strictly lower positive power of $\ep$. Thus by induction, any finite linear combination of (possibly negative) powers of $\ep$ can be expressed as an element of $\Z[\ep]$.
\end{proof}
\begin{prop}\label{same_repeating_part}
The repeating part in the base-$\ep$ expansion of any two elements in the same Shintani cycle is the same. 
\end{prop}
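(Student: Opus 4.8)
The plan is to establish the statement for two neighboring elements of a Shintani cycle and then propagate it around the orbit. Any two elements of a cycle $C_r$ are of the form $\ep^m\ast r$ and $\ep^n\ast r$, and by definition $\ep\ast r=\ep r+z$ for the unique $z\in\Z[\ep]$ that returns $\ep r$ to $R$. Hence it suffices to prove that $r$ and $\ep\ast r$ have the same repeating part, after which induction along $r,\ \ep\ast r,\ \ep^2\ast r,\dots$ (a finite list, since $C_r$ is finite) gives the result. I would therefore isolate the two operations that build $\ep\ast r$ from $r$: multiplication by $\ep$, and translation by the element $z\in\Z[\ep]$.

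Multiplication by $\ep$ is transparent: if $r=\sum_i a_i\ep^i$ is the greedy base-$\ep$ expansion, then $\sum_i a_i\ep^{i+1}=\sum_k a_{k-1}\ep^k$ represents $\ep r$ by the same digit string shifted one place toward the more significant positions. Since the admissibility condition characterizing greedy expansions is invariant under shifting a digit string, this shifted string is again the greedy expansion of $\ep r$; thus $\ep r$ and $r$ have the same repeating block, merely beginning one position apart. It remains to compare $\ep r$ with $\ep r+z$. Both are positive elements of $F$, so by the Pisot property (see \cite{pisot}) each has an eventually periodic greedy expansion; writing a tail as $\sum_{j\le i}a_j\ep^j=\alpha-\sum_{j>i}a_j\ep^j$ and noting that the finite sum $\sum_{j>i}a_j\ep^j$ lies in $\Z[\ep]$ by Lemma~\ref{finite_ep_exp}, we see that corresponding tails of $\ep r$ and $\ep r+z$ always differ by an element of $\Z[\ep]$.

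The crux --- and the step I expect to be the genuine obstacle --- is showing that this $\Z[\ep]$-difference does not alter the repeating block. In an integer base the analogous statement is immediate, because the lattice of possible carries is $\Z$ and $\Z\cap(-1,1)=\{0\}$ pins the fractional tail; here the lattice $\Z[\ep]$ is dense in $\R$, so a priori adding $z$ could perturb infinitely many digits. This is exactly where the Pisot inequality $|\ep'|=\ep^{-1}<1$ must be used. I would track the normalized greedy remainders $\xi_i:=\ep^{-i}\sum_{j\le i}a_j\ep^j$, which lie in $[0,\ep)$; using $\xi_i=\ep^{-i}(\alpha-\sum_{j>i}a_j\ep^j)$ one checks that the contracting conjugate keeps $\xi_i'$ bounded, so the pairs $(\xi_i,\xi_i')$ for $\ep r$ (and likewise for $\ep r+z$) stay in a bounded region of $\R^2$. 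Because these remainders also lie in the discrete lattice $\tfrac1p\cao_F$, they take only finitely many values, and one then argues that the two remainder orbits, which differ by an element of $\Z[\ep]$, must eventually synchronize; from the point of synchronization onward the greedy digits coincide, so $\ep r$ and $\ep r+z$ share a common repeating block up to the shift recorded in the previous step. Propagating this equality around the finite cycle $C_r$ then yields a single repeating part for every element of the cycle, which is the assertion of the proposition.
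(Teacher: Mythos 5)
Your skeleton matches the paper's: you reduce to comparing $r$ with $\ep\ast r$, treat multiplication by $\ep$ as a shift of the digit string, and isolate the translation by $z\in\Z[\ep]$ as the real issue. But the step you yourself flag as the crux is a genuine gap, and---more seriously---it cannot be filled in the form you propose. Your synchronization argument uses only the following properties: both numbers are positive elements of $\tfrac{1}{p}\cao_F$, their difference lies in $\Z[\ep]$, and their normalized remainders lie in a bounded, discrete (hence finite) set. Under exactly these hypotheses the conclusion is false. Take $F=\Q(\sqrt{3})$, $\ep=2+\sqrt{3}$ (so $\lfloor\ep\rfloor=3$, $\trep=4$), and compare $\alpha=1$ with $\beta=\ep-1=1+\sqrt{3}$: their difference is $\sqrt{3}\in\Z[\ep]$, yet $\alpha=1.\overline{0}$ while $\beta=2.\overline{2}$, since $2+2\sum_{i\geq 1}\ep^{-i}=2+\tfrac{2}{\ep-1}=1+\sqrt{3}$ and one checks this is the greedy expansion. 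The repeating parts differ (and neither is the degenerate tail $\overline{\lfloor\ep\rfloor}=\overline{3}$), and the two remainder orbits---identically $0$ for $\alpha$, never $0$ for $\beta$---never meet. This reflects the failure of the finiteness property for norm-one quadratic Pisot units: elements of $\Z[\ep]$ can have infinite purely periodic expansions. So finiteness plus ``difference in $\Z[\ep]$'' cannot force synchronization; deterministic dynamics on a finite set can have several disjoint cycles, and pigeonhole only yields eventual periodicity (at best a common period for the pair), not merging of orbits.

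What a correct proof must use, and what the paper does use, is the specific shape of the correction term: by Lemma~\ref{prop:abbounds} the element returning $\ep r$ to $R$ is exactly $z=1-\lfloor r_1+r_2\trep\rfloor\,\ep$, so in the shifted expansion of $\ep r$ only the digits in positions $\ep^1$ and $\ep^0$ are perturbed, and by bounded amounts. The paper then runs a carry analysis: out-of-range digits are repaired using $\trep=\ep+\ep^{-1}$ together with $\lceil\ep\rceil=\trep$ (Lemma~\ref{prop:eps=trace}), and the carries are shown to propagate only until the first digit different from $\lfloor\ep\rfloor$, the one dangerous case being a tail $\overline{\lfloor\ep\rfloor}$, which is excluded because it is never a greedy expansion. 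Your sketch never confronts carries or this degenerate representation---a sign that the unproved synchronization claim was carrying the entire load. To repair your argument you would need to feed in the explicit $z$ of Lemma~\ref{prop:abbounds} (or some other constraint tying both elements to $R$), at which point you are essentially forced into the paper's finite carry bookkeeping.
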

To prove this lemma, we require some preliminaries. Consider some $r \in R$, where $r = r_1 + r_2\ep$, and recall that $0 < r_1 \leq 1$ and $0 \leq r_2 < 1$, $r_1, r_2 \in \Q$. Hence the action of $\ep$ on $R$ amounts to:
\newline
\begin{align*}
    \ep\ast r &= \ep\cdot(r_1+r_2\ep)+z_1+z_2\ep
\end{align*}
where $z_1+z_2\ep$ is the unique element in $\Z[\ep]$ such that $\ep\cdot(r_1+r_2\ep)+z_1+z_2\ep\in R$. We can explicitly compute bounds for $z_1$ and $z_2$:
\black{\begin{lemma}
\label{prop:abbounds}
    If $\ep\ast(r_1+r_2\ep)=\ep\cdot(r_1+r_2\ep)+z_1+z_2\ep$, then 
    \begin{align*}
    z_1 = 1,\quad\text{and}\quad z_2 = -\lfloor r_1+r_2\emph{Tr}_{F/\Q}(\ep)\rfloor.
    \end{align*}
\end{lemma}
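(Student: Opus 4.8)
The plan is to compute $\ep\cdot r$ explicitly in the integral basis $\{1,\ep\}$ and then pin down the unique lattice translation $z_1+z_2\ep\in\Z[\ep]$ that returns the result to the fundamental region defining $R$. Because $\ep$ is a totally positive unit, its norm is $+1$, so its minimal polynomial gives the relation $\ep^2=\trep\,\ep-1$ (the same relation already used in Lemma~\ref{finite_ep_exp}). Substituting this into $\ep\cdot(r_1+r_2\ep)=r_1\ep+r_2\ep^2$ yields
\begin{align*}
    \ep\cdot(r_1+r_2\ep)=-r_2+(r_1+r_2\trep)\ep.
\end{align*}

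First I would record what it means for $\ep\ast r=\ep\cdot r+z_1+z_2\ep$ to lie in $R$: writing the result in the basis $\{1,\ep\}$, its coordinates are $(-r_2+z_1,\ r_1+r_2\trep+z_2)$, and membership in $R$ demands $-r_2+z_1\in(0,1]$ together with $r_1+r_2\trep+z_2\in[0,1)$. Next I would solve the two coordinate conditions separately. For the first coordinate, since $r_2\in[0,1)$ by definition of $R$, the requirement $-r_2+z_1\in(0,1]$ is equivalent to $z_1\in(r_2,1+r_2]$; this half-open interval has length one and contains the single integer $z_1=1$ (the boundary case $r_2=0$ being covered by the closed right endpoint). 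For the second coordinate, the requirement $r_1+r_2\trep+z_2\in[0,1)$ forces this quantity to equal the fractional part $\{r_1+r_2\trep\}$, whence $z_2=-\lfloor r_1+r_2\trep\rfloor$. In both cases the integer is unique because a half-open interval of length one contains exactly one integer, which simultaneously re-confirms the well-definedness of the action.

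This is largely a bookkeeping argument, so I do not anticipate a serious obstacle; the one point requiring care is the asymmetry of the defining intervals $(0,1]$ and $[0,1)$ for $R$, which is exactly what forces $z_1=1$ rather than $z_1=0$ and must be tracked precisely at the endpoints. As a consistency check, substituting the computed values back gives
\begin{align*}
    \ep\ast r=(1-r_2)+\{r_1+r_2\trep\}\ep,
\end{align*}
in agreement with the explicit formula for the $\ep_F$-action stated in the introduction.
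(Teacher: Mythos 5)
Your proposal is correct and follows essentially the same route as the paper: expand $\ep(r_1+r_2\ep)$ via $\ep^2=\trep\ep-1$, then read off $z_1$ and $z_2$ from the membership conditions $-r_2+z_1\in(0,1]$ and $r_1+r_2\trep+z_2\in[0,1)$. Your version merely spells out the interval/endpoint bookkeeping that the paper compresses into ``it is immediately apparent,'' which is a fine (indeed slightly more careful) rendering of the same argument.
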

\begin{proof}
    The minimal polynomial of $\ep$ is $$x^2 - \tr_{F/\Q}(\ep)x + \norm_{F/\Q}(\ep) = x^2 - \trep x + 1,$$ and thus $$\ep^2 = \trep\ep - 1.$$
    Consider 
    \begin{align*}
        \ep(r_1 + r_2\ep) = r_1\ep + r_2\ep^2 = -r_2 + (r_1+r_2\trep)\ep.
    \end{align*} 
    To find $\ep\ast(r_1+r_2\ep)$, we must shift $\ep(r_1+r_2\ep)$ by some $z_1+z_2\ep \in \Z[\ep]$ such that 
    \begin{align*}
        -r_2 + z_1 \in (0,1],\quad\text{and}\quad r_1+r_2\trep + z_2 \in [0,1).
    \end{align*}
    \black{It is immediately apparent that $z_2 = -\lfloor r_1+r_2\trep \rfloor.$ Further, since $r_1+r_2\ep \in R$, we have that $r_2 \in [0,1)$, so we see that $z_1 = 1$.}
\end{proof}
Note that the above proposition and our bounds on $r_1$ and $r_2$ imply that $-\trep \leq z_2 \leq 0$.
}Before we show that the repeating part of the base $\ep$ of elements in the same Shintani cycle is the same, we require one more fact about $\ep$, which we now prove.
\begin{lemma}
\label{prop:eps=trace}
    We have that $\left\lceil\ep\right \rceil=\emph{Tr}_{F/\Q}(\ep)$.
\end{lemma}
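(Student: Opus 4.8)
The plan is to exploit the fact that $\ep=\ep_F$ is the totally positive fundamental unit, so that its norm $\norm_{F/\Q}(\ep)=\ep\ep'$ equals $1$, where $\ep'$ denotes the Galois conjugate of $\ep$ under the nontrivial automorphism of $F/\Q$. First I would record that since $\ep>1$ is totally positive and $\ep\ep'=1$, the conjugate satisfies $\ep'=1/\ep\in(0,1)$. In particular $\ep'$ is strictly positive and strictly less than $1$; the total positivity is exactly what forces the norm to be $+1$ rather than $-1$, which is the reason $\ep'$ is positive.

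From this it follows that $\trep=\ep+\ep'$ is a rational integer (the trace of an algebraic integer) lying strictly between $\ep$ and $\ep+1$, because $0<\ep'<1$. The second ingredient is that $\ep\notin\Z$: any rational integer that is a unit of $\cao_F$ must be a unit of $\Z$, hence $\pm1$, and $\ep>1$ excludes this, so $\ep$ is irrational. Consequently $\lceil\ep\rceil$ is the \emph{unique} integer in the open interval $(\ep,\ep+1)$.

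Combining the two observations finishes the proof: $\trep=\ep+\ep'$ is an integer lying in $(\ep,\ep+1)$, and by the uniqueness just noted it must equal $\lceil\ep\rceil$, giving $\lceil\ep\rceil=\trep$. The argument is in essence a verification of inequalities, so I do not expect a serious obstacle; the only points demanding care are establishing the strict bounds $0<\ep'<1$ (equivalently, that the norm is $+1$, which uses total positivity together with $\ep\ep'=\norm_{F/\Q}(\ep)$) and confirming $\ep\notin\Z$, which guarantees that the ceiling lands strictly above $\ep$ rather than coinciding with it.
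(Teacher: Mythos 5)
Your proof is correct and is essentially the same as the paper's: both arguments come down to the identity $\trep=\ep+\ep'=\ep+1/\ep$ (valid because total positivity of the unit forces $\norm_{F/\Q}(\ep)=1$) together with the bounds $0<1/\ep<1$, which place the integer $\trep$ strictly between $\ep$ and $\ep+1$. The paper packages the upper bound $\trep\le\lceil\ep\rceil$ as a short proof by contradiction rather than appealing to the irrationality of $\ep$ and the uniqueness of the integer in $(\ep,\ep+1)$, but that is a cosmetic difference, not a genuinely different route.
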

\begin{proof} 
Let $\ep=s+t\sqrt{d}$. We start by showing $\trep\geq\left\lceil \ep\right\rceil$. Observe that 
\begin{align*}
    \trep = \ep+\frac{1}{\ep}
    \implies \trep>\ep
    \implies \trep\geq\left\lceil \ep\right\rceil,
\end{align*}
because the trace of an algebraic integer is always an element of $\Z$.

\black{Now we will show that $\trep \leq \lceil \ep \rceil$. Assume for the sake of contradiction that $\trep \ge \lceil \ep \rceil + 1$. Then we have that
\begin{align*}
    \trep \ge \lceil \ep \rceil + 1 \implies \frac{\ep^2+1}{\ep} \ge \lceil\ep\rceil + 1  \implies 1 - \ep \ge \ep\lceil\ep\rceil - \ep^2 \ge 0 
    \implies 1 \ge \ep.
\end{align*} 
However, by definition, $\ep > 1$, so we see that $$\trep < \lceil\ep\rceil + 1 \implies \trep \le \lceil\ep\rceil.$$}
\end{proof}
\black{
Now we proceed to prove that the repeating part in the base-$\ep$ expansion of any two elements in the same Shintani cycle is the same. 
\begin{proof}[Proof of Proposition \ref{same_repeating_part}]
     Consider some element $r\in R,$ with base-$\ep$ expansion
     \begin{align*}
         r &= a_1\ep + a_0+ a_{-1} \ep^{-1} + a_{-2} \ep^{-2} + a_{-3}\ep^{-3} + \ldots
     \end{align*}
     Note that since $r\in R,$ $\lfloor\log_{\ep}(r)\rfloor = 0\text{ or } 1,$ so the highest power of $\ep$ appearing in the base-$\ep$ expansion of $r$ is  at most $1.$ Given this base-$\ep$ expansion of $r,$ we have that
     \begin{equation}\label{ep_star_r}
         \ep\ast r = \ep\cdot r + z_2\ep +1 = a_{1}\ep^2 + (a_0 + z_2)\ep^1 +( a_{-1}+1) \ep^0 + a_{-2}\ep^{-1} + a_{-3}\ep^{-2} + \ldots
     \end{equation} Recall that in a base-$\ep$ expansion, each digit (in this case $a_i$ for $i\in\Z$) must be an element of the set $A\coloneqq\{0,1,\ldots, \lfloor\ep\rfloor\}.$ We now consider the following two cases: in Case $1,$ both $a_0 + z_2$ and $a_{-1}+1$ are in $A$; in Case $2,$ one or both of $a_0+z_2$ and $a_{-1}+1$ is not in $A.$ 
     
     \textbf{Case 1.}
     In Case $1,$ the expression in \eqref{ep_star_r} is already a valid base-$\ep$ expansion of $\ep\ast r.$ We can see that only a finite number of digits differ between the base-$\ep$ expansion of $\ep\ast r$ and the base-$\ep$ expansion of $r,$ so in this case the repeating part of $\ep\ast r$ must be the same as $r.$
     
    \textbf{Case 2.}
     Now we address Case $2,$ which we can split into Case $2.1$ and Case $2.2.$ In Case $2.1,$ $a_0+z_2\not\in  A;$ in Case $2.2,$ $a_{-1}+1\not\in A.$
     
    \textbf{Case 2.1.}    
     Assume that $a_0+z_2\not\in  A.$ Since $a_0$ is a digit in the base-$\ep$ expansion of $r$, $0\leq a_0\leq\lfloor\ep\rfloor$ by definition. Additionally, $-\trep\leq z_2\leq0$ by Lemma ~\ref{prop:abbounds}. Thus it always true that $-\trep\leq a_0+z_2\leq \trep.$ Therefore if $a_0+z_2\not\in  A,$ it must be that $-\trep\leq a_0+z_2\leq -1.$ Then we have that $0\leq a_0+z_2+\trep\leq\trep-1,$ so $a_0+z_2+\trep\in A$ and is hence an acceptable digit. Since $\trep=\ep+\ep^{-1},$ we can rewrite \eqref{ep_star_r} as 
     \begin{align}\notag
         \ep\ast r &= (a_{1}-1)\ep^2 + (a_0 + z_2+\trep)\ep + (a_{-1}+1-1) + a_{-2}\ep^{-1} + a_{-3}\ep^{-2} +\ldots\\ \label{2.1_shift}
         &= (a_{1}-1)\ep^2 + (a_0 + z_2+\trep)\ep + a_{-1} + a_{-2}\ep^{-1} + a_{-3}\ep^{-2} +\ldots
     \end{align}
    Since $0\leq a_0+z_2+\trep<\trep,$ the above base-$\ep$ expansion is valid as long as $0\leq a_{1}-1\leq\lfloor\ep\rfloor.$
    Since $0\leq a_1\leq\lfloor\ep\rfloor,$ we know $-1\leq a_1-1\leq\lfloor\ep\rfloor-1.$ Thus unless $a_1-1=-1,$ it must be true that $0\leq a_{1}-1\leq\lfloor\ep\rfloor.$ Let us assume for the sake of contradiction that $a_1-1=-1.$ If we let
    \begin{align*}
        \alpha&=\ep^2\text{ and }\\
        \beta&=(a_0 + z_2+\trep)\ep + a_{-1} + a_{-2}\ep^{-1} + a_{-3}\ep^{-2} +\ldots,
    \end{align*}
    then \eqref{2.1_shift} implies that $\ep\ast r=-\alpha+\beta.$ However, it follows directly from the definition of a base-$\ep$ expansion \black{and the fact that $\ep$ is a Pisot number} that $\alpha>\beta.$ Thus $\ep\ast r =-\alpha+\beta<0,$ which contradicts the fact that $\ep\ast r\in R.$ Thus we have that $0\leq a_{1}-1\leq\lfloor\ep\rfloor,$ so \eqref{2.1_shift} is a valid base-$\ep$ expansion of $\ep\ast r.$ We can see that only a finite number of digits differ between the base-$\ep$ expansion of $\ep\ast r$ and the base-$\ep$ expansion of $r.$ Therefore the repeating part of $\ep\ast r$ must be the same as $r.$ 
    
    \textbf{Case 2.2}
    Assume $a_{-1}+1\not\in A.$ By Case $2.1,$ we may assume without loss of generality that $a_0+z_2\in A.$ Since $0\leq a_{-1}\leq\lfloor\ep\rfloor,$ we know that $1\leq a_{-1}+1\leq\lfloor\ep\rfloor+1.$ Thus if $a_{-1}+1\not\in A,$ it must be that $a_{-1}+1=\lfloor\ep\rfloor+1,$ so $a_{-1}=\lfloor\ep\rfloor.$ Since $\lceil\ep\rceil=\trep$ by Lemma ~\ref{prop:eps=trace}, we have $a_{-1}+1=\trep.$ Again using that $\trep=\ep+\ep^{-1},$ we can rewrite \eqref{ep_star_r} as
    \begin{equation}\label{2.2_shift}
        \ep\ast r = a_{1}\ep^2 + (a_0 + z_2+1)\ep + (a_{-2}+1)\ep^{-1} + a_{-3}\ep^{-2} + \ldots
    \end{equation}
    Since $0\leq a_0 + z_2\leq\lfloor\ep\rfloor$ by assumption, if $a_0 + z_2+1\not\in A,$ then $a_0+z_2+1=\lfloor\ep\rfloor+1.$ This would imply that $\ep\ast r>\ep+1,$ which contradicts the fact that $\ep\ast r \in R.$ Thus it must be that $0\leq a_0 + z_2+1 \leq\lfloor\ep\rfloor.$ With this, we see that if $a_{-2}+1\in A,$ then \ref{2.2_shift} is a valid base-$\ep$ expansion of $\ep\ast r.$ Otherwise, if $a_{-2}+1\not\in A,$ then since $a_{-2}\in A,$ it must be that $a_{-2}=\lfloor\ep\rfloor,$ so $a_{-2}+1=\lceil\ep\rceil=\trep.$ Using that $\trep=\ep+\ep^{-1},$ we can rewrite \ref{2.2_shift} as 
    \begin{equation}
        \ep\ast r = a_{1}\ep^2 + (a_0 + z_2+1)\ep + 1 + (a_{-3}+1)\ep^{-2} + \ldots
    \end{equation}
    We note that by the same argument used before, if $a_{i}+1\not\in A$ for any $i\in\Z,$ then $a_i=\lfloor\ep\rfloor=\trep - 1.$ Thus if we let $j$ be the smallest positive integer such that $a_{-j}\neq\lfloor\ep\rfloor,$ then continuing in the same manner, we see that 
    \begin{equation}
        \ep\ast r = a_{1}\ep^2 + (a_0 + z_2+1)\ep + 1 + \ep^{-1} + \ep^{-2} + \ldots + \ep^{-j+3}+(a_j+1)\ep^{-j+1}+a_{j+1}\ep^{-j}\ldots
    \end{equation}
    Since the base-$\ep$ expansion of $r$ must be finite or periodic, it is certainly possible to choose such an index $j.$ We can assume that $r$ does not have repeating part $\overline{\lfloor\ep\rfloor},$ since $\lfloor\ep\rfloor\ep^{i}+\lfloor\ep\rfloor\ep^{i-1}+\lfloor\ep\rfloor\ep^{i-2}+\ldots=\ep^{i+1}$ for any $i\in\Z.$ Thus in Case $2.2,$ we see that only a finite number of digits differ between the base-$\ep$ expansion of $\ep\ast r$ and the base-$\ep$ expansion of $r.$ Therefore the repeating part of $\ep\ast r$ must be the same as $r$ in this case.
    
    Now we have seen that in all cases, the repeating part of the base-$\ep$ expansion of $\ep\ast r$ is the same as that of $r,$ which finishes the proof.
\end{proof}}}

\black{In many of the results which follow, it will prove useful for us to note the following fact about the map $\pi$ as defined in Proposition \ref{pi_prop}.
\begin{lemma}\label{equivariance}
    The map $\pi$ is equivariant under the action of $\langle\ep\rangle.$
\end{lemma}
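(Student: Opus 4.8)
The plan is to reduce the claim to the $\Z[\ep]$-linearity of $\pi$ established in Proposition \ref{pi_prop}, together with the explicit description of the $\ast$-action. First I would make precise the two actions between which equivariance is asserted. On the source, $\langle\ep\rangle$ acts on $R$ by $\ep \ast r = \ep r + z$, where $z \in \Z[\ep]$ is the unique shift returning $\ep r$ to $R$. On the target $\cao_F/p\cao_F \cong \F_{p^2}$, the generator $\ep$ acts by multiplication, $\alpha + p\cao_F \mapsto \ep\alpha + p\cao_F$, which is well-defined because $\ep \in \cao_F$ and $\ep\cdot p\cao_F = p\cao_F$. With these conventions, equivariance means precisely that $\pi(\ep \ast r) = \ep\,\pi(r)$ for all $r \in R$.

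The key step is then a direct computation that uses only that $\pi$ is a $\Z[\ep]$-module homomorphism. Writing $\pi(r) = pr + p\cao_F$, I would compute
\[
\pi(\ep \ast r) = \pi(\ep r + z) = p(\ep r + z) + p\cao_F = \ep(pr) + pz + p\cao_F.
\]
Since $z \in \Z[\ep] \subseteq \cao_F$, we have $pz \in p\cao_F$, so the term $pz$ vanishes in the quotient, leaving $\pi(\ep \ast r) = \ep(pr) + p\cao_F = \ep\,\pi(r)$, as required. Equivalently, under the identification of $R$ with $\tfrac{1}{p}\cao_F/\Z[\ep]$ from Proposition \ref{pi_prop}, the $\ast$-action is exactly multiplication by $\ep$ in this quotient module, and a $\Z[\ep]$-module homomorphism tautologically commutes with multiplication by $\ep$; this yields a one-line conceptual proof.

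There is essentially no obstacle here: the only point requiring care is the bookkeeping that the shift $z$ producing $\ep \ast r \in R$ lies in $\Z[\ep]$, and hence maps to $0$ under reduction by $p$, which is immediate from the definition of the $\ast$-action and the inclusion $\Z[\ep] \subseteq \cao_F$. I would therefore present the argument in its conceptual form, remarking that equivariance is inherited from the $\Z[\ep]$-module structure of $\pi$, and include the explicit computation above only as an elementary verification for the reader who prefers it.
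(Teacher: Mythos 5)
Your proof is correct, but it takes a genuinely different and more elementary route than the paper. You verify equivariance by direct computation from the definitions: $\pi(\ep \ast r) = p(\ep r + z) + p\cao_F$ with $z \in \Z[\ep] \subseteq \cao_F$, so $pz$ dies in the quotient and $\pi(\ep \ast r) = \ep\,\pi(r)$. The paper instead builds $\pi$ as a composition $\phi \circ \psi \circ \iota$ of canonical maps --- the identification $\iota\colon R \to \tfrac{1}{p}\cao_F/\Z[\ep]$, the projection $\psi\colon \tfrac{1}{p}\cao_F/\Z[\ep] \to \tfrac{1}{p}\cao_F/\cao_F$ obtained from the third isomorphism theorem, and the multiplication-by-$p$ isomorphism $\tfrac{1}{p}\cao_F/\cao_F \cong \cao_F/p\cao_F$ from the first isomorphism theorem --- and concludes equivariance because each factor is a $\Z[\ep]$-module homomorphism. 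Your computation is shorter and self-contained (it needs nothing beyond the definition of $\pi$ and of the $\ast$-action, not even Proposition \ref{pi_prop}), while the paper's argument buys a structural description of $\pi$ as a composite of quotient maps, which makes the module-theoretic bookkeeping explicit. Your second, ``conceptual'' paragraph --- that under the identification $R \cong \tfrac{1}{p}\cao_F/\Z[\ep]$ the $\ast$-action is literally multiplication by $\ep$, so any $\Z[\ep]$-module homomorphism commutes with it --- is essentially the paper's argument in compressed form; the one point worth stating explicitly there (and which your explicit computation handles automatically) is that the concretely defined action $\ep \ast r = \ep r + z$ agrees with module multiplication in the quotient, since that is exactly what the uniqueness of the shift $z \in \Z[\ep]$ guarantees.
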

\begin{proof}
    Since $\frac{1}{p}\cao_F$ is an ideal of $\cao_F,$ it is also a $\cao_F$-module. Moreover, $\cao_F$ is trivially an $\cao_F$-module, and $p\cao_F$ is a submodule of $\cao_F$ since $p\cao_F$ is an ideal of $\cao_F$. Thus the map $$\phi:\frac{1}{p}\cao_F\to\cao_F/p\cao_F$$ defined by multiplication by $p$ is an $\cao_F$-module homomorphism. The kernel of this map is $\cao_F,$ so by the first isomorphism theorem, $\frac{1}{p}\cao_F/\cao_F\xrightarrow{\sim}\cao_F/p\cao_F$ is an isomorphism of $\cao_F$-modules. Since $\Z[\ep]$ is a subring of $\cao_F,$ by restriction of scalars, $\frac{1}{p}\cao_F/\cao_F\cong\cao_F/p\cao_F$ is also an isomorphism of $\Z[\ep]$-modules. 

    We can also observe that since $\cao_F$ is a submodule of $\frac{1}{p}\cao_F,$ the projection map $$\frac{1}{p}\cao_F/\Z[\ep]\to\frac{1}{p}\cao_F/\Z[\ep]\bigg/\cao_F/\Z[\ep]$$ is a surjective $\Z[\ep]$-module homomorphism. By the third isomorphism theorem, we further have that $$\frac{1}{p}\cao_F/\Z[\ep]\bigg/\cao_F/\Z[\ep]\cong\frac{1}{p}\cao_F/\cao_F,$$ which implies $$\psi: \frac{1}{p}\cao_F/\Z[\ep]\to\frac{1}{p}\cao_F/\cao_F$$ is a surjective $\Z[\ep]$-module homomorphism. Since $R\subset \frac{1}{p}\cao_F$ and $R$ constitutes a complete set of coset representatives for $\frac{1}{p}\cao_F/\Z[\ep]$ (see \cite{wei_lun_stark_units}, Proposition $4.1$), the identity map $$\iota: R\to \frac{1}{p}\cao_F/\Z[\ep]$$ is a $\Z[\ep]$-module isomorphism. Now we can see that since $\pi=\phi\circ\psi\circ\iota,$ $\pi$ is a surjective $\Z[\ep]$-module homomorphism. Thus $\pi$ is equivariant under the action of $\langle\ep\rangle.$
\end{proof}}
\black{With this result, we may now examine more closely the action of $\ep$ on $R.$ Namely, we can deduce the following fact about nontrivial Shintani cycles. 
\begin{lemma}\label{same_cycle_length}
All nontrivial Shintani cycles have length equal to the multiplicative order of $\ep+p\cao_F$ in $(\cao_F/p\cao_F)$. 
\end{lemma}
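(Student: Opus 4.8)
The plan is to transport the $\langle\ep\rangle$-action from $R$ to $\F_{p^2}$ through the equivariant surjection $\pi$ of Proposition~\ref{pi_prop}, under which the action becomes multiplication by $\bar\ep := \ep + p\cao_F \in (\cao_F/p\cao_F)^\times$. Fix $r \in R \setminus \cao_F$, so that $\pi(r) \neq 0$ by Lemma~\ref{kernel_of_phi_lemma}, and let $N$ denote the multiplicative order of $\bar\ep$. By Lemma~\ref{equivariance} we have $\pi(\ep^i \ast r) = \bar\ep^{\,i}\,\pi(r)$ for every $i$, and I want to show that the cycle length $|C_r|$ equals $N$.

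One divisibility is immediate. If $\ep^n \ast r = r$, then applying $\pi$ gives $\bar\ep^{\,n}\pi(r) = \pi(r)$; since $\cao_F/p\cao_F \cong \F_{p^2}$ is a field and $\pi(r) \neq 0$, we may cancel to get $\bar\ep^{\,n} = 1$, whence $N \mid n$. In particular $N \mid |C_r|$.

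For the reverse divisibility I must prove $\ep^N \ast r = r$, which, since $R$ is a set of reduced representatives for $\tfrac1p\cao_F/\Z[\ep]$, is equivalent to $(\ep^N - 1)\,r \in \Z[\ep]$. By definition of $N$ we have $\ep^N \equiv 1 \pmod{p\cao_F}$, so $(\ep^N - 1)\,r \in p\cao_F \cdot \tfrac1p\cao_F = \cao_F$; the real content is to improve membership in $\cao_F$ to membership in $\Z[\ep]$. Writing $\ep = s + t\theta_F$, one has $\Z[\ep] = \Z + \Z\,t\theta_F$, so $[\cao_F : \Z[\ep]] = t$ and $\cao_F/\Z[\ep] \cong \Z/t\Z$ via the coefficient of $\theta_F$ modulo $t$. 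Thus the claim reduces to a single congruence modulo $t$, and here one uses $\ep \equiv s \pmod{t\cao_F}$ together with the fact that $\ep_F$ is totally positive and hence has norm $1$, which forces $s^2 \equiv 1 \pmod t$. Consequently the residual obstruction is exactly whether $\mathrm{ord}_t(s) \mid N$; since $\mathrm{ord}_t(s) \in \{1,2\}$, this amounts to showing that $N$ is even whenever $s \not\equiv 1 \pmod t$.

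The hard part is precisely this last parity statement: it is the only place where the non-maximality of $\Z[\ep]$ inside $\cao_F$ (the index $t$) genuinely enters, and it is invisible in the worked examples, where $t = 1$, so that $\Z[\ep] = \cao_F$ and the reverse divisibility is automatic. I expect to settle it using the hypothesis $p \equiv 3 \pmod 4$ (so $4 \mid p+1$) together with the structure of the norm-one subgroup $C \le (\cao_F/p\cao_F)^\times$ of order $p+1$ containing $\bar\ep$: since Frobenius gives $\bar\ep^{\,p} = \bar\ep^{-1}$, one has $\bar\ep^{\,p+1} = 1$ and hence $N \mid p+1$, and I would control the $2$-part of $N$ by relating $\bar\ep$ to the congruence $s \equiv -1 \pmod t$ through the odd part of $p+1$. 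Once $\mathrm{ord}_t(s) \mid N$ is in hand, $\ep^N$ acts trivially on $r$, giving $|C_r| \mid N$; combined with $N \mid |C_r|$ this yields $|C_r| = N$ for every nontrivial Shintani cycle, as claimed.
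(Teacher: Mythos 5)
Your forward divisibility (that the order $N$ of $\ep+p\cao_F$ divides every nontrivial cycle length) is exactly the paper's argument: equivariance of $\pi$ (Lemma~\ref{equivariance}), $\pi(r)\neq 0$ (Lemma~\ref{kernel_of_phi_lemma}), and the fact that $\cao_F/p\cao_F$ is a field. Where you genuinely improve on the paper is in the diagnosis of the reverse divisibility. The paper's own proof of that direction is a non sequitur: having written the stabilizer as $\langle\ep^{M'}\rangle$, it deduces from $\ep^{M'}\equiv 1\pmod{p\cao_F}$ that $M'\mid M$, whereas that congruence only gives $M\mid M'$, which was already known; the step actually required, namely improving $(\ep^{M}-1)r\in\cao_F$ to $(\ep^{M}-1)r\in\Z[\ep]$, is never addressed. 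Your reduction of this step is correct: $\Z[\ep]=\Z+\Z t\theta_F$ has index $t$ in $\cao_F$, multiplication by $\ep$ on $\cao_F/\Z[\ep]\cong\Z/t\Z$ is multiplication by $s$, and $\norm_{F/\Q}(\ep)=1$ forces $s^2\equiv 1\pmod{t}$, so (at least when $\gcd(t,p)=1$) the lemma is equivalent to the parity claim: $N$ must be even whenever $s\not\equiv 1\pmod{t}$.

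That parity claim, however, is precisely where your proposal stops being a proof and becomes a plan, and this gap cannot be closed, because the claim---and with it the lemma as stated---is false. Take $F=\Q(\sqrt7)$ (class number $1$), so $\ep=\ep_F=8+3\sqrt7$, $s=8\equiv-1\pmod 3$, $t=3$, and take $p=263$: then $p\equiv 3\pmod 4$ and $\legendre{7}{263}=-1$ (since $263\equiv 4\pmod 7$ is a square mod $7$), so all hypotheses hold. Repeated squaring modulo $263$ gives $\ep^2\equiv 127+48\sqrt7$, $\ep^4\equiv 171+94\sqrt7$, $\ep^8\equiv 95+62\sqrt7$, $\ep^{16}\equiv 165+208\sqrt7$, $\ep^{32}\equiv 8-3\sqrt7=\ep^{-1}$, hence $\ep^{33}\equiv 1\pmod{p\cao_F}$; since $\ep^{3}\not\equiv 1$ and $\ep^{11}\not\equiv 1$, we get $N=33$, which is odd. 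Now consider $r=\kappa_1\oplus\tfrac1p=\tfrac{266}{789}+\tfrac13\ep\in R\setminus\cao_F$ (this is a valid Shintani-set element by Lemma~\ref{shin_set_construction}). Writing $\tfrac1p\cao_F/\Z[\ep]$ as the direct sum of its $3$-part $\cao_F/\Z[\ep]$ and its $263$-part (coprime orders, both $\ep$-stable), $\ep^n$ fixes $r$ iff it fixes both components: the $263$-part component requires $33\mid n$ by your first direction, while the $3$-part component $\kappa_1$ requires $n$ even, since $\ep\ast\kappa_1=\kappa_2$ and $\ep\ast\kappa_2=\kappa_1$ ($\ep$ acts there as multiplication by $s\equiv-1$). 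So this nontrivial cycle has length $\mathrm{lcm}(2,33)=66\neq N$. In short: your proof and the paper's both break at the same spot, yours by leaving the key claim open, the paper's by an invalid implication; the statement is salvageable only under an extra hypothesis such as $s\equiv 1\pmod t$ (in particular $t=1$, which holds in every worked example of the paper, where $F=\Q(\sqrt3)$), or for the cycles of elements, like $1/p$ itself, whose class in $\cao_F/\Z[\ep]$ vanishes.
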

\begin{proof}
Let $M$ denote the multiplicative order of $\ep+p\cao_F$ in $\cao_F/p\cao_F$, and consider $r\in R\setminus\cao_F.$ We will show that $|C_r| = M.$ Suppose that $\ep^m\ast r =r$ for some $m\in\Z^+.$ Then by Lemma ~\ref{equivariance}, 
\begin{align*}
    \pi(r)=\pi(\ep^m\ast r)=(\ep^m+p\cao_F)\pi(r)
    \implies (\ep^m-1+p\cao_F)\pi(r)=0.
\end{align*}
By assumption $r\not\in\cao_F,$ so $\pi(r)\neq 0$ by Lemma \ref{kernel_of_phi_lemma}. Since $\cao_F/p\cao_F$ is a field, it must be that $(\ep^m-1+p\cao_F)=0.$ Therefore $\ep^m\equiv 1 \mod{p\cao_F},$ so $M | m.$ As a consequence, if we denote the stabilizer subgroup associated to $r$ under the action of $\ep$ as $\langle\ep\rangle_r,$ then $\langle\ep\rangle_r\subset\langle\ep^M\rangle.$ Moreover, we note that since $\langle\ep\rangle_r$ is a subgroup of $\langle\ep^M\rangle$, it must be that $\langle\ep\rangle_r=\langle\ep^{M'}\rangle$ for some $M'$ such that $M|M'.$ Moreover, we also have that
\begin{align*}
    \ep^{M'}\ast r=r\implies (\ep^{M'}-1+p\cao_F)\pi(r)=0
\end{align*}
where $\pi(r)\neq0,$ so that $\ep^{M'}\equiv 1 \mod{p\cao_F}.$ Thus it must also be that $M'|M,$ so $M'=M,$ and hence $\langle\ep\rangle_r=\langle\ep^M\rangle.$ In other words, $\langle\ep^M\rangle$ is the stabilizer subgroup of $r$ for all $r\in R\setminus\cao_F$. By the orbit-stabilizer theorem, we then have that $|C_r|=[\langle\ep\rangle:\langle\ep^M\rangle]=M.$
\end{proof}}
In the lemma which follows, we equate $M$ with the minimal period length of the base-$\ep$ expansion of $r$ for all $r\in R\setminus\cao_F.$ \black{This fact, combined with Lemma \ref{same_cycle_length}, will then imply that for all $r\in R\setminus\cao_F$, $|C_r|=P_r,$ where $P_r$ denotes the minimal period length of the base-$\ep$ expansion of $r$.}
\begin{lemma}\label{orbit_length_is_cycle_length}
     \black{For any $r\in R\setminus \cao_F$, the minimal period length of the base-$\ep$ expansion of $r$ is equal to the multiplicative order of $\ep+p\cao_F$ in $(\cao_F/p\cao_F)$.}
\end{lemma}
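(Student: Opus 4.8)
The plan is to interpret the multiplicative order $M$ of $\ep + p\cao_F$ as the length $|C_r|$ of the Shintani cycle of $r$, and then to match this cycle length with the minimal period length $P_r$ by exploiting the fact that the action of $\ep$ on $R$ realizes a one-step cyclic shift of the periodic part of the base-$\ep$ expansion. Writing $M$ for the multiplicative order of $\ep+p\cao_F$ in $\cao_F/p\cao_F$, Lemma \ref{same_cycle_length} already gives $|C_r| = M$, that $\ep^M \ast r = r$, and that the stabilizer of $r$ under $\langle\ep\rangle$ is exactly $\langle \ep^M\rangle$. It therefore suffices to prove $P_r = M$.

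First I would extract from the proof of Proposition \ref{same_repeating_part} the precise statement that multiplication by $\ep$ acts on base-$\ep$ expansions as a left shift: in each case treated there, passing from $r$ to $\ep\ast r$ alters only finitely many of the highest-order digits, while every sufficiently negative digit of $\ep\ast r$ equals the next digit of $r$. Consequently, for every $i$ the periodic part of $\ep^i\ast r$ is the cyclic shift by $i$ positions of the periodic part of $r$; since $r\notin\cao_F$, this periodic part is nonempty and has minimal period length $P_r$.

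Next I would establish the divisibility $P_r \mid M$: applying $\ep^M$ returns $r$ to itself, so the cyclic shift by $M$ positions fixes the periodic sequence of $r$, and a purely periodic sequence of minimal period $P_r$ is fixed by a shift of $k$ positions if and only if $P_r \mid k$, whence $P_r \mid M$. For the reverse divisibility, shifting the periodic part of $r$ by $P_r$ positions returns the identical sequence, so the base-$\ep$ expansions of $\ep^{P_r}\ast r$ and of $r$ agree in all sufficiently negative positions. Their difference therefore has a finite base-$\ep$ expansion, and Lemma \ref{finite_ep_exp} places it in $\Z[\ep]$. Since $R$ is a complete reduced set of coset representatives for $\frac{1}{p}\cao_F/\Z[\ep]$ (Proposition 4.1 of \cite{wei_lun_stark_units}), two of its elements differing by an element of $\Z[\ep]$ must coincide; hence $\ep^{P_r}\ast r = r$. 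Thus $\ep^{P_r}$ stabilizes $r$, so $\ep^{P_r}\in\langle\ep^M\rangle$ and $M\mid P_r$. Combining the two divisibilities gives $P_r = M$.

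The main obstacle I anticipate is the first step: Proposition \ref{same_repeating_part} as stated only asserts that the repeating part is preserved along a cycle, whereas the argument above needs the sharper, alignment-sensitive claim that each application of $\ep$ advances the periodic block by exactly one position. Making this quantitative shift statement airtight---tracking that the carries arising in Cases $2.1$ and $2.2$ of that proof only perturb finitely many leading digits and never disturb the far tail---is the delicate point; everything else is then a short orbit-counting and coset argument. A cleaner packaging would be to assemble these observations into a single bijection between $C_r$ and the set of distinct cyclic shifts of the periodic block of $r$, which has exactly $P_r$ elements, immediately yielding $|C_r| = P_r$ and hence $M = P_r$.
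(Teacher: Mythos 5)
Your proof is correct, and it follows the paper's overall skeleton — proving the equality $P_r = M$ via two divisibilities, with the $P_r \mid M$ direction argued exactly as in the paper (the action shifts the period block cyclically, $\ep^M \ast r = r$, and a sequence of minimal period $P_r$ fixed by a shift of $M$ forces $P_r \mid M$; your phrasing via minimal periods is in fact slightly more careful than the paper's claim that the cycle permutation $\tau^M$ "is the identity"). Where you genuinely diverge is the direction $M \mid P_r$. The paper multiplies the eventually periodic expansion of $r$ by $\ep^{P_r}$ and subtracts: the periodic tails cancel exactly, so $\ep^{P_r}r - r \in \Z[\ep]$ by Lemma \ref{finite_ep_exp}, and then $\pi$-equivariance (Lemma \ref{equivariance}) plus the fact that $\cao_F/p\cao_F$ is a field with $\pi(r) \neq 0$ yields $\ep^{P_r} \equiv 1 \pmod{p\cao_F}$ directly. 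You instead apply the shift property once more to conclude that $\ep^{P_r}\ast r$ and $r$ have identical tails, deduce $\ep^{P_r}\ast r - r \in \Z[\ep]$, use that $R$ is a reduced set of representatives to get $\ep^{P_r}\ast r = r$, and then invoke the stabilizer characterization $\langle\ep\rangle_r = \langle\ep^M\rangle$ from Lemma \ref{same_cycle_length}. Both are valid; the trade-off is that the paper's computation for this direction needs no alignment-sensitive shift claim at all (raw multiplication suffices, since periodicity makes the tails cancel), so the delicate point you correctly flag — that Proposition \ref{same_repeating_part} must be read as saying each application of $\ep$ advances the period block by exactly one position — is load-bearing in only one direction of the paper's proof but in both directions of yours. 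In exchange, your argument recycles the group-theoretic work of Lemma \ref{same_cycle_length} rather than repeating the $\pi$/field computation, and your proposed repackaging (a bijection between $C_r$ and the $P_r$ distinct cyclic shifts of the period block) is an attractive way to obtain $|C_r| = P_r$ in one stroke.
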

\begin{proof}
Consider an element $r = r_1+r_2\ep \in R\setminus\cao_F.$ We start by showing that $P_r | M$. As mentioned at the beginning of Section \ref{ep_exp_section}, the base-$\ep$ expansion of $r$ is always eventually periodic, say
\begin{align}\label{ep_exp_sum}
    r=\sum_{i=-1}^{N-1}{a'_i\ep^{-i}}+\ep^{-N}\sum_{j=0}^\infty {\left(a_1\ep^{-jP_r}+a_2\ep^{-jP_r-1}+\ldots+a_{P_r}\ep^{-jP_r-P_r+1}\right)}.
\end{align}
We remind the reader that since $r\in R, \lfloor\log_{\ep}(r)\rfloor = 0\text{ or } 1,$ so the highest power of $\ep$ in \eqref{ep_exp_sum} is $1.$ Multiplying \eqref{ep_exp_sum} by $\ep^{P_r}$, we obtain 
\begin{align*}
\ep^{P_r}r&=\sum_{i=-1}^{N-1}{a'_i\ep^{-i+P_r}}+\ep^{-N}\sum_{j=0}^\infty {\left(a_1\ep^{-(j-1)P_r}+a_2\ep^{-(j-1)P_r-1}+\ldots+a_{P_r}\ep^{-(j-1)P_r-P_r+1}\right)}.
\end{align*}
Reindexing \eqref{ep_exp_sum}, we get
\begin{align*}
    r=\sum_{i=-1}^{N-1}{a'_i\ep^{-i}}+\ep^{-N}\sum_{j=1}^\infty {\left(a_1\ep^{-(j-1)P_r}+a_2\ep^{-(j-1)P_r-1}+\ldots+a_{P_r}\ep^{-(j-1)P_r-P_r+1}\right)}.
\end{align*} And thus 
\begin{align*}
    \ep^{P_r}r-r=\left(\sum_{i=-1}^{N-1}{a'_i\ep^{-i+P_r}-a'_i\ep^{-i}}\right)+\ep^{-N}\left(a_1\ep^{P_r}+a_2\ep^{P_r-1}+\ldots+a_{P_r}\ep\right).
\end{align*}
Let
\begin{align*}
    \alpha = \sum_{i=-1}^{N-1}{a'_i\ep^{-i+P_r}}, \quad\text{and}\quad\beta = -\sum_{i=-1}^{N-1}{a'_i\ep^{-i}}, \quad\text{and}\quad\gamma = \ep^{-N}\left(a_1\ep^{P_r}+a_2\ep^{P_r-1}+\ldots+a_{P_r}\ep\right).
\end{align*}
Note that, because $\alpha$, $\beta$, and $\gamma$ have finite $\ep$ expansions, we have that $\alpha, \beta, \gamma \in \Z[\ep]$, and thus $$ \alpha + \beta + \gamma = \ep^{P_r}r-r \in \Z[\ep].$$ By definition, we know that $$\ep^{P_r}\ast r = \ep^{P_r}r + z$$ for some $z \in \Z[\ep]$. Thus, using Lemmas ~\ref{kernel_of_phi_lemma} and ~\ref{equivariance}, we have
\begin{align*}
    \ep^{P_r}\ast r-r-z= \ep^{P_r}r - r \implies \pi(\ep^{P_r}\ast r-r-z) = \pi(\ep^{P_r}r - r) 
    \implies (\ep^{P_r} + p\cao_F - 1)\pi(r) = 0.
\end{align*}
Since $F$ is a field in which $\pi(r)\neq 0$ since $r\not\in\cao_F,$ we have that 
\begin{align*}
    \ep^{P_r} - 1 + p\cao_F = 0 \implies \ep^{P_r} \equiv 1 \pmod{p\cao_F}.
\end{align*}
for any $r \in R\setminus\cao_F$. Recall that $M$ is the multiplicative order of $\ep$ in $\cao_F/p\cao_F$, so we see that $M|P_r$.

Next, we show that $P_{r} | M$. Since both $r$ and $\ep \ast r$ have periodic base-$\ep$ expansions, we let $N_1$ represent the smallest integer such that the repeating part of the base-$\ep$ expansion of $r$ begins in the $\ep^{-N_{1}}$ place. Similarly, let $N_2$ represent the smallest integer such that the repeating part of the base-$\ep$ expansion of $\ep\ast r$ begins in the $\ep^{-N_{2}}$ place. 

Let $S = \max(N_1, N_2)$ be the smallest integer such that the base-$\ep$ expansion of both $r$ and $\ep\ast r$ is periodic for all indices greater than $S$. Thus, the digits in the $\ep^{-S}, \ep^{-S-1}, \ldots, \ep^{-S-P_r}$ place of the base-$\ep$ expansion of $r$ constitute a full period, and we let the ordered set 
\begin{align*}
    \{x_1, x_2, \ldots, x_{P_{r}}\}
\end{align*}
represent the {period set} of $r$. As shown in Proposition ~\ref{same_repeating_part}, the operation $\ep\ast r$ shifts the digits within the repeating part of the base-$\ep$ of $r$ to the left by one index. In other words, the period set of $\ep\ast r$ is the ordered set
\begin{align*}
    \{x_2, \ldots, x_{P_{r}}, x_1\}.
\end{align*}
Note that moving between the period set of $r$ and the period set of $\ep\ast r$ can be represented by applying the permutation
\begin{align*}
    \tau = (1 \hspace{0.2 cm} 2 \hspace{0.2 cm} \cdots \hspace{0.2 cm} P_r) \in S_{P_r}
\end{align*}
to the period set of $r.$ Additionally, we have that $r = \ep^{M}\ast r$, so the period sets of $r$ and $\ep^{M}\ast r$ must be equal. Thus, 
\begin{align*}
    \tau^{M} \{x_1, x_2, \ldots, x_{P_{r}}\} = \{x_1, x_2, \ldots, x_{P_{r}}\},  
\end{align*}
which implies that $\tau^{M}$ is the identity permutation. Since the order of $\tau \in S_{P_r}$ is $P_r$, we have that $P_r | M$. So, we see that $P_r = M$.
\end{proof}

\black{In our final lemma before we prove Theorem \ref{thm 1.2}, we show that for any nontrivial Shintani cycle, the sum of the coefficients $r_1$ and $r_2$ where $r=r_1+r_2\ep$ of all the elements $r$ in the Shintani cycle is a constant. In fact, these coefficients sum to $M.$}
\black{
\begin{lemma}\label{sum_of_a_cycle_is_l}
    For any $r\in R\setminus\cao_F,$ let $r'\coloneqq r_1'+r_2'\ep$. Then,
    \begin{align*}
        \sum_{r'\in C_r}{(r_1'+r_2')}=M.
    \end{align*}  
\end{lemma}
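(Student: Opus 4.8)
The plan is to reduce everything to the explicit first-coordinate formula for the action of $\ep$ and then to sum over the whole cycle, exploiting the fact that $C_r$ is a genuine orbit of size $M$. The second coordinate will not really be needed beyond the bookkeeping.

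First I would record the shape of $\ep\ast r'$. For any $r' = r_1'+r_2'\ep \in R$, the computation $\ep(r_1'+r_2'\ep) = -r_2' + (r_1'+r_2'\trep)\ep$ together with Lemma~\ref{prop:abbounds} (which gives $z_1=1$) shows that
\[
    \ep\ast r' = (1-r_2') + \{r_1'+r_2'\trep\}\ep,
\]
so the first coordinate of $\ep\ast r'$ is exactly $1-r_2'$. Next I would invoke Lemma~\ref{same_cycle_length}: since $r\in R\setminus\cao_F$, the nontrivial Shintani cycle $C_r = \{r,\ \ep\ast r,\ \ldots,\ \ep^{M-1}\ast r\}$ has precisely $M$ elements and $\ep^M\ast r = r$, so the map $r'\mapsto \ep\ast r'$ permutes $C_r$ cyclically (in particular bijectively).

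Now I would sum the first-coordinate relation over the cycle. On one hand, summing $1-r_2'$ over $r'\in C_r$ gives
\[
    \sum_{r'\in C_r}(1-r_2') = M - \sum_{r'\in C_r} r_2'.
\]
On the other hand, because $r'\mapsto \ep\ast r'$ is a bijection of $C_r$ onto itself, the multiset of first coordinates of the elements $\ep\ast r'$ is identical to the multiset $\{r_1' : r'\in C_r\}$; hence $\sum_{r'\in C_r}(1-r_2')$ equals $\sum_{r'\in C_r} r_1'$. Combining the two evaluations yields $\sum_{r'\in C_r} r_1' = M - \sum_{r'\in C_r} r_2'$, i.e.\ $\sum_{r'\in C_r}(r_1'+r_2') = M$, which is the claim.

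The argument is essentially a reindexing (telescoping) identity, so I do not expect a serious obstacle; the only points needing care are confirming that the first-coordinate formula $r_1'=1-r_2'$ holds uniformly for every element of the cycle (immediate from Lemma~\ref{prop:abbounds}) and justifying the reindexing step, namely that $C_r$ is a single orbit of size exactly $M$ so that summing over $\{\ep\ast r' : r'\in C_r\}$ is the same as summing over $C_r$ (immediate from Lemma~\ref{same_cycle_length}).
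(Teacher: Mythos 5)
Your proposal is correct and follows essentially the same route as the paper's proof: both rest on Lemma~\ref{prop:abbounds} (the first coordinate of $\ep\ast r'$ is exactly $1-r_2'$) and Lemma~\ref{same_cycle_length} (the orbit has exactly $M$ elements), followed by a reindexing of the sum over the cycle. The paper phrases the reindexing with explicit indices ($r_1(i+1)+r_2(i)=1$ telescoped over $i=1,\ldots,M$), while you phrase it as a bijection/multiset identity, but the two are the same argument.
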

\begin{proof}
    Let $\ep^i\ast r \coloneqq r_1(i)+r_2(i)\ep,$ so $r_1(i+1)+r_2(i+1)\ep=\ep\ast (r_1(i)+r_2(i)\ep).$ Recall that \begin{align*}
        \ep\ast (r_1(i)+r_2(i)\ep)=(1-r_2(i))+\{r_1(i)+\trep r_2(i)\}\ep
    \end{align*}
    by Lemma \ref{prop:abbounds}. Comparing coefficients, we see that $r_1(i+1)+r_2(i)=1$ for all $i\in\Z$. Moreover, since $M=|C_r|$ for all $r\in R\setminus \cao_F$ by Lemma ~\ref{same_cycle_length}, we have that $r_1(m)=r_1(m+M)$ for any integer $m.$ Using these facts, we see that
    \begin{align*}
        \sum_{r\in C_r}{(r_1(i)+r_2(i))}&=\sum_{i=1}^M {(r_1(i)+r_2(i))}=r_1(1)+r_2(M)+\sum_{i=1}^{M-1} {r_1(i+1)}+\sum_{j=1}^{M-1}{r_2(j)} \\  &=r_1(M+1)+r_2(M)+\sum_{i=1}^{M-1} {\bigg(r_1(i+1)}+r_2(i)\bigg)=1+(M-1)=M.
    \end{align*}  
\end{proof}}
\black{
\subsection{Proof of Theorem 1.2}\label{thm1.2proof_subsection}
Since $\langle \ep \rangle$ acts on $R$, $R$ decomposes into a disjoint union of Shintani cycles, under this action. Letting $\mathcal{L}$ denote a complete reduced set of Shintani cycle representatives for $R,$ and recalling that $C_r$ denotes the Shintani cycle of $r,$ we can rewrite Shintani's formula as follows:
\begin{align}\notag
h_K &= \frac{1}{2}\sum_{r\in R}{\chi_{K/F}(rp\cao_F)\sum_{\substack{0\leq l_1,l_2\leq2\\ l_1+l_2=2}}{\frac{B_{l_1}(r_1)}{l_1!}\frac{B_{l_2}(r_2)}{l_2!}\trep^{l_2-1}}}\\ \label{eqn:begin_thm_1.2}
&= \frac{1}{2}\sum_{i=1}^{|C_r|}\sum_{r\in\mathcal{L}}{\chi_{K/F}(\ep_{F}^i\ast r \cdot p\cao_F)\sum_{\substack{0\leq l_1,l_2\leq2\\ l_1+l_2=2}}{\frac{B_{l_1}(r_1)}{l_1!}\frac{B_{l_2}(r_2)}{l_2!}\trep^{l_2-1}}}
\end{align}

First, we show that $\chi_{K/F}(r' \cdot p\cao_F)$ is constant for all $r'\in C_r$. By definition of $\ep\ast r$, we see that $\ep\ast r=\ep r +z$ for some $z \in \cao_F.$ Thus
\begin{align*}
    \chi_{K/F}(\ep^i\ast r \cdot p\cao_F) &= \chi_{K/F}( (\ep^ir + z) \cdot p\cao_F)= \chi_{K/F}(\ep^irp\cao_F + zp\cao_F ).
\end{align*}
Since $zp\cao_F\subset p\cao_F,$ $p\cao_F | zp\cao_F,$ and since $p\cao_F$ is the conductor of this Hecke character, we see that 
\begin{align*}
    \chi_{K/F}( \ep^i\ast r \cdot p\cao_F) 
    &= \chi_{K/F}(\ep^irp\cao_F ).
\end{align*}
Additionally, $\ep$ is a unit, so we know $$r\cao_F = \ep^ir \cao_F $$ for any integer $i.$ Therefore
$$\chi_{K/F}(\ep^i\ast r \cdot p\cao_F)= \chi_{K/F}(\ep^ir p\cao_F)= \chi_{K/F}( rp \cao_F),$$ and thus the Hecke character value in \eqref{eqn:begin_thm_1.2} is constant throughout each Shintani cycle.  

By Lemma ~\ref{same_cycle_length}, all nontrivial Shintani cycles in $R$ contain the same number of elements. Since $1/p$ is an element of $R\setminus\cao_F$ by Lemma ~\ref{orbit_length_is_cycle_length}, the period length $\ell_{F,p}$ of the base-$\ep$ expansion of $1/p$ is equal to the length of each nontrivial cycle. \black{
\begin{remark} \label{trivial_cycles}
    Note that, for all $r\in R\cap\cao_F,$ the Hecke character $\chi_{K/F}(rp\cao_F)$ evaluates to 0, so elements $r\in R\cap\cao_F$ are all weighted by a factor of $0$ in \eqref{eqn:begin_thm_1.2}. Hence, we can ignore them in our calculations.
\end{remark} }
Using these facts, we obtain
\begin{equation}\label{thm1.2_simplification_intermediate_step}
    h_K=\frac{1}{2}\sum_{i=1}^{\ell_{F,p}}\hspace{+.1 cm}\sum_{r\in\mathcal{L}}{\chi_{K/F}(rp\cao_F)\sum_{\substack{0\leq l_1,l_2\leq2\\ l_1+l_2=2}}{\frac{B_{l_1}(r_1)}{l_1!}\frac{B_{l_2}(r_2)}{l_2!}\trep^{l_2-1}}}.
\end{equation}
As shown in \cite{wei_lun_stark_units}, we have that
\begin{align*}
    \sum_{r \in R} \chi_{K/F}( r p\cao_F) = \sum_{r \in R\cap\cao_F} \chi_{K/F}( r p\cao_F) + \sum_{r \in R \setminus \cao_F} \chi_{K/F}( r p\cao_F) = 0.
\end{align*}
\black{For all $r \in \cao_F,$ we have already seen that $\chi_{K/F}( rp\cao_F) = 0.$ } Thus,
\begin{align*}
    0 &= \sum_{r \in R\setminus\cao_F} \chi_{K/F}( r p\cao_F) = \sum_{\black{r \in \mathcal{L}\setminus\cao_F}} \chi_{K/F}( r p\cao_F)\cdot\ell_{F,p} = \ell_{F,p}\sum_{\black{r \in \mathcal{L}\setminus\cao_F}} \chi_{K/F}(r p\cao_F),
    \end{align*}
which yields $$\sum_{r \in \mathcal{L}\setminus\cao_F} \chi_{K/F}( r p\cao_F)=0.$$ In other words, we have character orthogonality across the elements $r \in \mathcal{L}\setminus\cao_F$. With this, we consider the sum over Bernoulli polynomials within this formula. \black{Letting \begin{align*}
    \mathcal{B}(r_1+r_2\ep) \coloneqq \sum_{\substack{0\leq l_1,l_2\leq2\\ l_1+l_2=2}} \frac{B_{l_1}(r_1)B_{l_2}(r_2)}{l_1!l_2!} \tr_{F/\Q}\left(\ep^{l_2-1} \right),
\end{align*}we see that
\begin{align}\notag
    \mathcal{B}(r_1+r_2\ep) &= \dfrac{r_1^2-r_1+\frac{1}{6}}{2}\trep + 2\bigg(r_1 - \frac{1}{2}\bigg)\bigg(r_2 - \frac{1}{2}\bigg) + \dfrac{r_2^2-r_2+\frac{1}{6}}{2}\trep\\\label{eqn:thm_1.2_middle}~~
    &= \frac{\trep}{2}\bigg(r_1^2 + r_2^2 - (r_1+r_2) + \frac{1}{3}\bigg) + 2r_1r_2 - (r_1+r_2) + \frac{1}{2}.
\end{align}}
Recall that by Lemmas ~\ref{sum_of_a_cycle_is_l}, for all $r\in\mathcal{L}\setminus\cao_F,$ $$\sum_{r'\in C_r}r_1'+r_2'=M.$$ Thus, we can further simplify \eqref{eqn:thm_1.2_middle} to
\begin{equation}\label{thm1.2_simplification_second_intermediate_step}
    \mathcal{B}(r_1+r_2\ep)=\frac{\trep}{2}\bigg(r_1^2 + r_2^2 - M + \frac{1}{3}\bigg) + 2r_1r_2 - M - \frac{1}{2}.
\end{equation}
Because we have character orthogonality over $\mathcal{L}\setminus\cao_F,$ 
we can add a constant to the inner Bernoulli sum of \eqref{thm1.2_simplification_intermediate_step} without changing the value of the whole expression. In particular, if we let 
\begin{align*}
    c\coloneqq\frac{\trep}{2}\bigg(-M+\frac{1}{3}\bigg) - M-\frac{1}{2},
\end{align*}
we see that \eqref{thm1.2_simplification_second_intermediate_step} can be rewritten as 
\begin{align*}
    \mathcal{B}(r_1+r_2\ep)&= \frac{\trep}{2}\bigg(r_1^2 + r_2^2\bigg) + 2r_1r_2 + c\\
    &=\frac{1}{2}\bigg(\trep r_1^2 + 4r_1r_2 +\trep r_2^2\bigg) + c.
\end{align*} Using these results and letting $\ep\ast r\coloneqq r_1(i)+r_2(i)\ep,$ we obtain
\begin{align*}\label{eqn:1.2stinky}
    h_K = \frac{1}{4}\sum_{i=1}^{\ell_{F,p}}\sum_{r\in \mathcal{L}\setminus\cao_F}{\chi_{K/F}}\bigg(r p\cao_F\bigg)\bigg(\trep r_1(i)^2 + 4r_1(i)r_2(i) + \trep r_2(i)^2 \bigg).
\end{align*}}\black{Recall that $Q_F(Y_1,Y_2)\coloneqq\trep Y_1^2+4Y_1Y_2+\trep Y_2^2.$ Thus if we make a slight abuse of notation by letting $Q_F(\ep^i\ast r)=\trep (r_1(i)^2+4r_1(i)r_2(i)+\trep r_2(i)^2,$ then we can express $h_K$ as 
\begin{align*}
    h_K = \frac{1}{4}\sum_{i=1}^{\ell_{F,p}}\sum_{r\in \mathcal{L}\setminus\cao_F}{\chi_{K/F}}(r p\cao_F)Q(\ep^i\ast r).
\end{align*}}
\black{\section{Examples}\label{example_section}
Here we illustrate Theorems \ref{thm:1.1} and \ref{thm 1.2} for $\Q(\sqrt{3},\sqrt{-p})$, where $p$ is prime. Note that the ring of integers of $F=\Q(\sqrt{3})$ is given by $\Z[\sqrt{3}]$, and its totally positive unit group $\cao_{F}^{\times,+}$ is generated by $\ep_{F} = 2 + \sqrt{3}$. We require that $p\equiv 3\pmod{4}, \legendre{3}{p} = -1,$ and $7\leq p.$ The first two conditions imply that the relative discriminant ideal is the prime ideal $p\Z[\sqrt{3}]$. Consequently, $\Z[\sqrt{3}]/p\Z[\sqrt{3}] ~\cong ~\F_p[\sqrt{3}]$.  

\subsection{Theorem 1.1 with $F=\Q(\sqrt{3})$}
Let $\rho_{F,p} = a + b\sqrt{3}$ be a generator of $\F_p[\sqrt{3}]$. Table \ref{table1} lists values of $\rho_{F,p}$ as computed with {\tt{SageMath}}. Using these values, we use \eqref{def:c_defns} and \eqref{def:d_defns} to calculate $C_{F,p}$ and $D_{F,p}$, then use \eqref{eqn:x1defns} and \eqref{eqn:x2defns} to find the corresponding rational functions $X_{F,p}(z)$ and $Y_{F,p}(z)$, which are also displayed in Table \ref{table1}.

We extract the first $p^2-1$ coefficients from our rational functions by taking the $k^{th}$ derivative of $X(z)$ and $Y(z)$, evaluating each function at $z=0$, and dividing by $k!$. Note that in this case, since $t=1$, we obtain only $1$ sequence $x_1(m),$ and $y_1(m),$ from each of $X_{F,p}(z)$ and $Y_{F,p}(z)$ respectively. Since $\tr_{F/Q}(\ep_F)=4$, we have
\begin{align*}
    Q_{F}(Y_1, Y_2)=4Y_1^2+4Y_1Y_2+4Y_2^2.
\end{align*}
Now we may apply Theorem ~\ref{thm:1.1} to obtain
\begin{align*}
    h_{F(\sqrt{-p})}=\frac{1}{16p^2}\sum_{1\leq m\leq p^2-1}(-1)^mQ_{F}\bigg(x_1(m),y_1(m)\bigg).
\end{align*}
\black{The smallest suitable prime for which we can apply Theorem ~\ref{thm:1.1} here is $p=7,$ for which we calculate
\begin{align*}
    h_{F(\sqrt{-7})}=\frac{1}{784}(&-84+76-300+52-28+436-100+148-196+52-108+124-84+148\\
    &-36+172-28+124-12+76-196+172-4+156-84+76-300+52-28\\
    &+156-100+316-196+52-108+124-84+316-36+228-28+124-12\\
    &+76-196+228-4+436)=2.
\end{align*}}In Table \ref{table1}, we list some terms of our alternating sum for the class numbers of all such primes less than $100$, along with the corresponding class numbers calculated using Theorem~\ref{thm:1.1} and verified using {\tt{SageMath}}. 
\bgroup
\def\arraystretch{3.0}
\black{
\begin{table}[ht]
    \centering
\begin{tabular}{|c | c | c | c | c |}
 \hline
 $\hspace{0.25 cm} p \hspace{0.25 cm} $ & $\hspace{0.25 cm}\rho_{F,p} \hspace{0.25 cm}$ & $\hspace{0.4 cm}X_{F,p}(z) \hspace{0.4 cm}$ & $\hspace{0.4 cm}Y_{F,p}(z) \hspace{0.4 cm}$ & $\hspace{1.0 cm}$$h_{F_(\sqrt{-p})}$ Calculation $\hspace{1.0 cm}$ \\  
 \hline\hline
 7 &$ 6+\sqrt{3}$ & $\dfrac{6z - 33z^2}{33z^2 - 12z + 1}$ & $\dfrac{z}{33z^{2} - 12z + 1}$ & $\frac{1}{784}\bigg( -84 + 76 - \ldots + 436\bigg) = 2$\\ 
 \hline
 19 & $1+4\sqrt{3}$ & $\dfrac{6z +47z^2}{-47z^2 - 12z + 1}$ & $\dfrac{4z}{-47z^{2} - 12z + 1}$ & $\frac{1}{5776} \bigg(-364 + 252 - \ldots + 3892 \bigg) = 2$\\
 \hline
 31 & $1+6\sqrt{3}$ & $\dfrac{z +107 z^2}{-107z^2 - 2z + 1}$ & $\dfrac{6z}{-107z^{2} - 2z + 1}$ & $\frac{1}{15376} \bigg(-1084 + 676 - \ldots + 10804 \bigg) = 6$ \\
 \hline
 43 & $1+5\sqrt{3}$ & $\dfrac{z +74z^2}{-74z^2 - 2z + 1}$ & $\dfrac{5z}{-74z^{2} - 2z + 1}$ & $\frac{1}{29584} \bigg(-3556 + 4836 - \ldots + 21172 \bigg) = 6$ \\
 \hline
 67 & $2+5\sqrt{3}$ & $\dfrac{2z + 71z^2}{-71z^2 - 4z + 1}$ & $\dfrac{5z}{-71z^{2} - 4z + 1}$ & $\frac{1}{71824}\bigg(-11772 + 2212 - \ldots + 52276\bigg) = 6$\\  
 \hline
 79 & $2+6\sqrt{3}$ & $\dfrac{2z + 104z^2}{-104z^2 - 4z + 1}$ & $\dfrac{6z}{-104z^{2} - 4z + 1}$ & $\frac{1}{99856} \bigg(-16068 + 7372 - \ldots + 73012\bigg) = 30$\\ [1ex] 
 \hline
\end{tabular}
\caption{Theorem \ref{thm:1.1} for primes $p<100.$}
\label{table1}
\end{table}}
\subsection{Theorem 1.2 with $F = \Q(\sqrt{3})$}
We illustrate Theorem ~\ref{thm 1.2} in the same setting. Letting $F=\Q(\sqrt{3})$, we calculate $h_{K}$ for $p\equiv 3 \pmod{4}$ where $7\leq p$ and $\legendre{3}{p}=-1$. We remind the reader that $\ep_F=2+\sqrt{3}$, so $t=1$. Thus by Lemma ~\ref{explicit_kernel}, $\ker(\pi)=R_{F,p}\cap\cao_F=\{1\}$.

In the case that $p=7,$ we first calculate the base-$\ep_F$ expansion of $1/7,$
\begin{align*}
    \frac{1}{7}&=\ep_F^{-2}+3\ep_F^{-3}+2\ep_F^{-4}+2\ep_F^{-6}+2\ep_F^{-7}+3\ep_F^{-8}+3\ep_F^{-11}+2\ep_F^{-12}+2\ep_F^{-14}+2\ep_F^{-15}+3\ep_F^{-16}+\ldots\\
    &=0.01\overline{32202230}.
\end{align*}
Noticing that $1/7$ has period length $\ell_{F,7}=8$, by Lemma $~\ref{orbit_length_is_cycle_length}$, we can then deduce that there are \begin{align*}
    \frac{|R_{F,7}\setminus \cao_F|}{\ell_{F,7}}=\frac{1\cdot 7^2-1}{8}=6
\end{align*}
disjoint Shintani cycles which comprise $R_{F,7}\setminus \cao_F$. We can generate these Shintani cycles explicitly, by calculating $\ep_F^i \ast r$ for $0\leq i < 8$ for $r\in R_{F,7}\setminus\cao_F$. One can verify that 
\begin{align*}
    \mathcal{L}=\left\{\frac{1}{7}+\frac{1}{7}\ep_F,\hspace{+0.1 cm}\frac{1}{7},\hspace{+0.1 cm}\frac{1}{7}+\frac{4}{7}\ep_F,\hspace{+0.1 cm}\frac{1}{7}+\frac{5}{7}\ep_F,\hspace{+0.1 cm}\frac{2}{7}+\frac{2}{7}\ep_F,\hspace{+0.1 cm}\frac{3}{7}\right\}
\end{align*}
is a complete reduced set of representatives for all $6$ distinct nontrivial cycles in $R_{F,7}.$} With these values, we now calculate $h_{F(\sqrt{-7})}$ using Theorem $~\ref{thm 1.2}$. Noting that $\tre=4$ so $Q_F(Y_1,Y_2)=4Y_1^2+4Y_1Y_2+4Y_2^2,$ we compute 
\begin{align*}
    h_{F(\sqrt{-7})}&=\frac{1}{4}\sum_{i=1}^{8}\hspace{+0.23cm}\sum_{r\in \mathcal{L}} \chi_{F(\sqrt{-7})/F} \left(r {p}\cao_{F}\right) \bigg(4 r_1(i)^2 + 4r_1(i)r_2(i) + 4r_2(i)^2 \bigg)\\
    &=\frac{1}{4}\bigg(-\frac{220}{7} + \frac{228}{7} - \frac{188}{7} + \frac{212}{7} - \frac{180}{7} + \frac{204}{7}\bigg)=2.
\end{align*}
In Table \ref{table_thm1.2}, we carry out the same procedure for all suitable primes less than $100.$
\bgroup
\def\arraystretch{3.0}
\begin{table}[ht]
    \centering
\begin{tabular}{|c | p{.39\linewidth} | c | c |}
 \hline
 $\hspace{0.25 cm}p \hspace{0.25 cm} $ & \centering{Base $\ep_F$ Expansion of $1/p$} & $\ell_{F,p}$ & $h_{F_(\sqrt{-p})}$ Calculation \\  
 \hline\hline
 $7$ & $0.01\hspace{+0.07 cm}\overline{32202230}$ & $8$ & $\frac{1}{4}\bigg(-\frac{220}{7} + \frac{228}{7} - \frac{188}{7} + \frac{212}{7} - \frac{180}{7} + \frac{204}{7}\bigg)=2$ \\ 
 \hline
 $19$ & $0.002\hspace{+0.07 cm}\overline{22231}$ & $5$ & $\frac{1}{4}\bigg(\frac{396}{19}+\frac{400}{19}-\frac{360}{19}+\ldots+\frac{332}{19}+\frac{328}{19}\bigg)=2$\\ 
 \hline
 $31$ & $0.001\hspace{+0.07 cm}\overline{2132023120322221002303200122}$\newline$\overline{2230}$ & $32$ & $\frac{1}{4}\bigg(\frac{3876}{31}-\frac{3788}{31}-\frac{3764}{31}-\ldots -\frac{3444}{31}+\frac{3420}{31}\bigg)=6$\\ 
 \hline
 $43$ & $0.001\hspace{+0.07 cm}\overline{02311222230}$ & $11$ & $\frac{1}{4}\bigg(\frac{1856}{43}+\frac{1848}{43}+\frac{1940}{43}+\ldots+\frac{1656}{43}+\frac{1624}{43}\bigg)=6$\\ 
 \hline
 $67$ & $0.0002\hspace{+0.07 cm}\overline{3110011313222221320122102312}$\newline$\overline{222231}$& $34$ & $\frac{1}{4}\bigg(\frac{8908}{67} +\frac{9228}{67}-\frac{8604}{67}+\ldots+\frac{7492}{67}+\frac{7860}{67}\bigg)=6$\\ 
 \hline
 $79$ & $0.0002\hspace{+0.07 cm}\overline{122101031011213031211013010}$\newline$\overline{122113222222010012113021100303001}$\newline$\overline{12031121001022222231}$& $80$ & $\frac{1}{4}\bigg(\frac{22740}{79}
-\frac{22364}{79}+\ldots-\frac{22372}{79}+\frac{22500}{79}\bigg)=30$\\ 
 \hline
\end{tabular}
\caption{Theorem \ref{thm 1.2} for primes $p<100.$}
\label{table_thm1.2}
\end{table}
\bibliography{bibliography}{}

\begin{thebibliography}{10}

\bibitem{wei_lun_stark_units}
Adrian Barquero-Sanchez, Riad Masri, and Wei-Lun Tsai.
\newblock Stark units and special gamma values.
\newblock {\em Res. Number Theory}, 7, 06 2021.

\bibitem{cox}
David~A. Cox.
\newblock {\em Primes of the Form $x^2+ny^2$: Fermat, Class Field Theory, and Complex Multiplication}.
\newblock 1989.

\bibitem{taylor}
Albrecht Fr\"olich and Martin~J Taylor.
\newblock {\em Algebraic Number Theory}.
\newblock The Press Syndicate of Cambridge University, 1991.

\bibitem{girstmair_g_expansions}
Kurt Girstmair.
\newblock A ``popular" class number formula.
\newblock {\em Am. Math. Mon.}, 101, 12 1994.

\bibitem{hardy}
G.H. Hardy, E.M. Wright, D.R. Heath-Brown, and J.~Silverman.
\newblock {\em An Introduction to the Theory of Numbers}.
\newblock Oxford mathematics. 2008.

\bibitem{hirzebruch_sqrt_p}
Friedrich Hirzebruch.
\newblock Hilbert modular surfaces and class numbers.
\newblock {\em Gesammelte Abhandlungen}, 32-33:380--393, 1987.

\bibitem{narkiewicz}
Wladyslaw Narkiewicz.
\newblock {\em Elementary and Analytic Theory of Algebraic Numbers}.
\newblock Springer, 1990.

\bibitem{neukirch}
Jürgen Neukirch.
\newblock {\em Algebraic number theory}.
\newblock Springer, 2011.

\bibitem{pisot}
Klaus Schmidt.
\newblock On periodic expansions of pisot numbers and salem numbers.
\newblock {\em Bull. Lond. Math. Soc.}, 12(4):269--278, 1980.

\bibitem{shintani_thm}
Takuro Shintani.
\newblock On evaluation of zeta functions of totally real algebraic number fields at non-positive integers.
\newblock {\em J. Fac. Sci. Univ. Tokyo Sect. IA Math.}, 23(2):393--417, 1976.

\bibitem{zagier_sqrt_p}
Don Zagier.
\newblock Nombres de classes et fractions continues.
\newblock {\em Journées arithmétiques de Bordeaux, Astérisque}, 24-25:81--97, 1975.

\end{thebibliography}
\bibliographystyle{plain}
\end{document}